\documentclass[reqno,11pt]{amsart}

\usepackage[top=2.0cm,bottom=2.0cm,left=1.9cm,right=1.9cm]{geometry}
\usepackage{amsthm,amsmath,amssymb,dsfont}
\usepackage{mathrsfs,amsfonts,functan,extarrows,mathtools}
\usepackage[colorlinks]{hyperref}

\usepackage{marginnote}
\usepackage{xcolor}
\usepackage{stmaryrd}
\usepackage{esint}
\usepackage{graphicx}
\usepackage{bm}

\newtheorem{theorem}{Theorem}[section]
\newtheorem{proposition}{Proposition}[section]

\newtheorem{corollary}{Corollary}[section]
\newtheorem{remark}{Remark}[section]
\newtheorem{lemma}{Lemma}[section]

\numberwithin{equation}{section}
\allowdisplaybreaks

\def\R{\mathbb{R}}

\def\P{\mathbf{P}}
\def\I{\mathbf{I}}
\def\v{\varepsilon}
\def\div{\mathrm{div}}

\def\l{\langle}
\def\r{\rangle}

\def\up{\textup}
\def\n{\nabla}
\def\p{\partial}

\def\H{\mathcal{H}}

\def\a{\alpha}
\def\b{\beta}

\def\D{\mathcal{D}}
\def\c{\cdot}

\def\E{\mathcal{E}}
\def\M{\sqrt{M}}

\newcounter{wronumber}

\begin{document}

\title[The small Deborah number limit for the VFP-INS system]
{The small Deborah number limit for the fluid-particle flows: incompressible case}

\author[Zhendong Fang]{Zhendong Fang}
\address[Zhendong Fang]
        {\newline School of Mathematics, South China University of Technology, Guangzhou, 510641, P. R. China}
\email{zdfang@scut.edu.cn}

\author[Kunlun Qi]{Kunlun Qi}
\address[Kunlun Qi]
        {\newline School of Mathematics, University of Minnesota - Twin Cities, Minneapolis, MN, 55455, USA}
\email{kqi@umn.edu}

\author[Huanyao Wen]{Huanyao Wen}
\address[Huanyao Wen]
        {\newline School of Mathematics, South China University of Technology, Guangzhou, 510641, P. R. China}
\email{mahywen@scut.edu.cn}

\keywords{Hydrodynamic limit, Navier-Stokes equation, Kramer-Smoluchowski equation, Vlasov-Fokker-Plank equation, Hilbert expansion, Energy estimate, Macro-micro decomposition.}

\subjclass[2020]{Primary 35Q99; 35B25; 35Q30; 35B40. Second: 82C40; 76N10.}


\maketitle

\begin{abstract}

This work is devoted to the study of the hydrodynamic limit for the fluid-particle flows governed by the Vlasov-Fokker-Planck (VFP) equation coupled with the incompressible Navier-Stokes (INS) equation as the Deborah number approaches to zero. The limit is valid globally in time provided that the initial perturbation is small in a neighborhood of a steady state. The proof is based on a formal derivation of the limiting system via the Hilbert approach, followed by a rigorous justification via introducing a novel decomposition involving some macroscopic quantities and a refined energy estimate motivated by macro-micro decomposition. In contrast to the existing results for the same scaled model, the present work provides the first one on the hydrodynamic limits in a strong sense with an explicit convergence rate.

\end{abstract}


\phantomsection
\addcontentsline{toc}{section}{\contentsname}



\section{Introduction}
\label{sec:intro}

\subsection{The model}
\label{subsec:bg}

The fluid-particle coupled system has attracted a lot of attention in recent years, due to its wide application in describing the phenomena arising from physics, engineering, and medical research, for instance, the droplets of spray, diesel engines, sedimentation analysis, biotechnology, and mineral processes \cite{CBLBPEJSM05, SBRBKT03, WFA85}. Among those, the two-phase flow system consisting of the Vlasov-Fokker-Planck equation for particles coupled with the Navier-Stokes equation for fluid is of particular interest.
In this paper, we consider the hydrodynamic limit of the following scaled Vlasov-Fokker-Planck equation coupled with the incompressible Navier-Stokes equation (VFP-INS) system:
\begin{equation}\label{VFP-INS}
\left\{
\begin{aligned}
&\p_t f^\v+\frac{1}{\v}v\c\n_x f^\v=\frac{1}{\v^2}\div_v[\n_v f^\v+(v-\v u^\v)f^\v],\\[4pt]
&\p_t  u^\v+u^\v\c\n_x u^\v+\n_x p^\v=\Delta_x u^\v+\frac{1}{\v}\int_{\R^3} (v-\v u^\v) f^\v \,dv,\\[4pt]
&\up{div}_x u^\v=0,\\[4pt]
&(f^\v(t,x,v), u^\v(t,x))|_{t=0} = (f^{\v,in}(x,v),u^{\v,in}(x)),\\[4pt]
&(f^\v(t,x,v), u^\v(t,x)) \to (M, 0),\quad \up{as} \quad |x|\to +\infty,
\end{aligned}
\right.
\end{equation}
where $f^\v(t, x, v),\,u^\v(t,x),\,p^\v(t,x)$ denote the density distribution function of the particles, velocity of the fluid, and the pressure of the fluid at the time $t\geq 0$, position $x\in\R^3$, velocity $v\in\R^3$, respectively. $\v$ denotes the Deborah number, for which we refer the readers to \cite{RRH75} for more physical details. $M $ is a global normalized Maxwellian equilibrium given by
\begin{equation}\label{Maxwellian}
M:=M(v)=\frac{1}{(2\pi)^{\frac{3}{2}}}\up{e}^{-\frac{|v|^2}{2}}.
\end{equation}
This scaled system \eqref{VFP-INS} was first proposed in \cite{GTJPEVA04-1}, where the small Deborah number limit for light particles regime was defined in the sense that, as $\v \to 0$, it leads \eqref{VFP-INS} to the following macroscopic system, namely the Navier-Stokes equation coupled with the Kramer-Smoluchowski equation (NS-KS) system:
\begin{equation}\label{NS-KS}
\left\{
\begin{aligned}
&\p_t n_0+\up{div}_x(u_0 n_0)=\Delta_x n_0,\\[4pt]
&\p_t u_0+u_0\c\n_x u_0+\n_x p_0=\Delta_x u_0,\\[4pt]
&\up{div}_x u_0=0,\\[4pt]
&(n_0(t,x), u_0(t,x))|_{t=0}=(n_0^{in}(x),u_0^{in}(x)),\\[4pt]
&(n_0(t,x), u_0(t,x)) \to (1,0), \quad \up{as} \quad |x|\to +\infty.
\end{aligned}
\right.
\end{equation}

\subsection{Previous results and our contributions}
\label{subsec:previous}

In light of its paramount physical significance, the fluid-particle two-phase model has been extensively studied for a long time. We first review the previous literature concerning its well-posedness and hydrodynamic limit, and the main purpose as well as the novelty of our paper are subsequently illustrated through the comparison with the previous works.\\[-10pt]

\textit{Previous results for ``well-posedness"}:
As for the VFP-INS system, based on the hypoelliptic structure, Goudon-He-Moussa-Zhang proved the global-in-time existence of classical solutions and long-time behavior in the case of initial data close to equilibrium in \cite{GHMAZP10}. Chae-Kang-Lee showed the existence of weak solutions both in 2D and 3D as well as the existence and uniqueness of the smooth solutions in 2D in \cite{CMKKLJ11}. Moreover, as for the nonlinear VFP equation coupled with the Navier-Stokes equation, Carrillo-Choi-Karper proved the global existence and long-time behavior of weak solutions in \cite{CJACYPK16}.
Beyond the Cauchy problem, substantial advancements have been made in addressing the initial boundary value problem of the VFP equation coupled with the incompressible Navier-Stokes equation, where the existence of weak solution was proved by Yu in \cite{YC13} and Yu-Wang in \cite{WDHYC15} with the specular boundary condition; more recently, the global-in-time existence of the strong solution and its exponential convergence to the equilibrium has been derived by Li-Liu-Yang in \cite{LHLLSQYT22}, where the Maxwell boundary condition was considered.
For the VFP equation coupled with other fluid systems, the global existence of weak and strong solutions to the initial value problem for the VFP equation coupled with the compressible Navier-Stokes equation have been studied in \cite{CG06, CYP17, LFCMYMWDH17, MV07, MYMWDH20}, while the global-in-time existence and long-time behavior of the VFP equation coupled with the Euler equation can be found in \cite{CJADRJMA11, DL13}.
In addition to the VFP equation, various other forms of kinetic equations can be intricately coupled with the fluid equation: for instance, Jabin-Soler studied the coupled Boltzmann and Navier-Stokes fragmentation model by a refined averaging lemma \cite{Jabin2010}, and we refer the readers to \cite{CKL19,LYZ20-2, LYZ21} for recent progress toward the Vlasov-Poisson-Boltzmann system; besides, the Cucker-Smale model has also attracted attention as a typical kinetic equation to describe the flocking behavior \cite{Karper2013, Peszek1, Peszek2}.\\[-10pt]


\textit{Previous results for ``hydrodynamic limit"}:
In the seminal work \cite{GTJPEVA04-1}, Goudon-Jabin-Vasseur established a systematic framework to study the hydrodynamic limit of VFP-INS system \eqref{VFP-INS}, where they referred to such a scaling for the so-called light particles (density of the particle is much less than the gas density, i.e., $\rho_p/\rho_g = O(\v^2)$, where $\rho_p$ is the density of one single particle (mass/volume), while $\rho_g$ is the density of the fluid modeled by $\eqref{VFP-INS}_2-\eqref{VFP-INS}_3$, which is supposed to remain constant for the incompressible fluid, see \cite[Eq.~1.2-1.3]{GTJPEVA04-1} for more details), as informed by dimensionless analysis; they worked on the weak solution to the scaled VFP-INS system \eqref{VFP-INS} in 2D case, and thanks to a\,\textit{priori} uniform estimates and compactness argument, the limiting density and velocity field, at the cost of subsequence, were proved to be the solution to the NS-KS system \eqref{NS-KS} in a weak sense.
In their follow-up work \cite{GTJPEVA04}, another different scaling was proposed for the fine particles (density of the particle is of the same order as gas density, i.e., $\rho_p \approx \rho_g$), the hydrodynamic limit of which was rigorously justified by the well-known relative entropy method, more precisely, it leads the weak solution $(f^\v, u^\v)$ of the scaled coupled system to the smooth solution of the limiting macroscopic system in the sense of relative entropy, which implies the $L^1$ and $L^2$ convergence, respectively.
As a powerful tool introduced by Yau \cite{HTYau1991} (also referring to the weak-strong uniqueness principle by Dafermos \cite{Dafermos1996} and Lions \cite{Lions1998}), the relative entropy method has also been widely used in the justification of hydrodynamic limit for the other kinetic-fluid coupled systems, in addition to the hydrodynamic limit of Boltzmann equation by Bardos-Golse-Levermore \cite{BCGFLCD93} and Golse-Saint-Raymond \cite{ FGLSR04}.
For example, Su-Yao extended the result \cite{GTJPEVA04} to the inhomogeneous VFP-INS system in the bounded domain in \cite{YFSLY20}, while for the VFP equation coupled with the compressible Navier-Stokes equation, Mellet-Vasseur in \cite{MV08} studied the hydrodynamic limit in a bounded domain with Dirichlet or reflection boundary conditions based on their previous work \cite{MV07}, whereas Choi-Jung further generalized their results to the nonlinear VFP-INS system in \cite{YPCJJ21}.
Furthermore, Choi and his collaborator also derived the hydrodynamic limit for the charged particle interacting with the incompressible viscous flow in \cite{Choi23-2} and a Kuramoto model for synchronization phenomena \cite{Choi23-1}.
When considering the collective flocking behavior of particles, the well-recognized kinetic Cucker-Smale equation emerges as the focal point, coupled with the fluid equation through the mediation of drag forces: this type of kinetic-fluid model was first proposed by Bae-Choi-Ha-Kang in \cite{Kang4} to describe the dynamics of self-propelled particles immersed by the incompressible viscous fluid, and related models had been further discussed in \cite{CJACYPK16, Choi23-3, Kang2, Kang3, Poyato2021}, including the rigorous derivation from the kinetic Cucker-Smale model to the pressureless Euler system by Figalli-Kang in \cite{Kang1}. The recent development of the hydrodynamics limit for the flocking model can be found in \cite{Tadmor1} by Tadmor. Additionally, in the case where the diffusion does not dominate the behavior, we refer the readers to the high-field limit work of Poupaud and his collaborators in \cite{Poupaud2, Poupaud3}. For a vast literature about the hydrodynamic limit, we refer readers to \cite{JB20, YPCJJ20, YPCBK21, TG01, LYZ20} and the references therein.\\[-10pt]

\textit{Mathematical challenges and our contributions}:  As discussed above, the existing result \cite{GTJPEVA04-1} concerning the hydrodynamic limit of the specifically scaled VFP-INS system \eqref{VFP-INS} is basically attributable to the employment of the uniform estimate with the convergence achieved in a weak sense. However, based on such a sort of (weak) compactness argument, it is extremely difficult to obtain an accurate pointwise convergence. Even if using the relative entropy method as presented in \cite{GTJPEVA04} for the same system with a different scaling, no pointwise convergence could be expected, since only the $L^1$ or $L^2$ convergence can be implied from the convergence in the sense of relative entropy. 

The main purpose of our paper is to rigorously prove the hydrodynamic limit from the VFP-INS system \eqref{VFP-INS} to the NS-KS system \eqref{NS-KS} in a stronger pointwise sense, and hope to obtain an explicit convergence rate simultaneously. To achieve this goal, we use another type of methodology inspired by Caflisch's work \cite{Caflisch} for studying the hydrodynamic limit of the Boltzmann equation. The main idea is to seek a particular form of solution to the scaled kinetic-fluid system (e.g., \eqref{VFP-INS} in our model) by the Hilbert expansion around the classical solution to the limiting macroscopic system (e.g., \eqref{NS-KS} in our model). Following this framework, one has to show the well-posedness of the limiting macroscopic system, and the next crucial step involves substantiating uniform energy estimates for the remainder system (e.g., \eqref{Remainder equations} in our model), which is typically less singular or nonlinear than the original scaled kinetic-fluid system. Therefore, the pointwise convergence becomes feasible as long as the high-regularity energy estimate can be closed. This approach has recently been used to study self-organized kinetic equations and the system coupled with Navier-Stokes equations by Jiang-Luo-Zhang in \cite{JLZ20, JNXLJZTF16}.



For this problem, however, if we attempt with the usual Hilbert expansion of $(f^\v, u^\v)$ for the scaled VFP-INS system \eqref{VFP-INS} as follows:
\begin{equation}\label{Special form solution for guess}
f^\v=f_0 + \v f , \quad u^\v = u_0 + \v u,
\end{equation}
and substitute \eqref{Special form solution for guess} into  $\eqref{VFP-INS}_1$, then we find that the equation of $f$ becomes
\begin{multline}\label{Guess of form solution}
    \p_t f-\frac{1}{\v^2}\up{div}_v(\n_v f+v f)=\\
    -\frac{1}{\v^2}\big[v\c\n_x f_0+\up{div}_v(u_0 f_0)\big]-\frac{1}{\v}\big[\p_tf_0+v\c\n_x f+\up{div}_v(uf_0+u_0f)\big]-\up{div}_v(uf),
\end{multline}
with $f_0=n_0 M$ (here $n_0$ is the solution to the limiting system \eqref{NS-KS}), which can be derived by formal analysis.
We observe that the dissipation on the left-hand side essentially comes from $-\frac{1}{\v^2}\up{div}_v(\n_v f+v f)$ with the order of $\frac{1}{\v^2}$. In this case, it would be extremely hard to use the dissipation to control the ``bad'' term $-\frac{1}{\v^2}\big[v\c\n_x f_0+\up{div}_v(u_0 f_0)\big]$, since this singular term has the same order of $\frac{1}{\v^2}$.

Hence, to overcome this difficulty, the significance of our contributions lies in designing a brand-new expansion form of $(f^\v, u^\v)$ and constructing the well-matched energy/dissipation structure accordingly, such that we can prove the well-posedness of $(f^\v, u^\v)$ based on a refined energy estimate (Theorem \ref{Main-Limits}); as a direct consequence, the pointwise convergence can be achieved with an explicit convergent rate (Corollary \ref{Conergence}) thanks to the embedding relation. More precisely,


\begin{itemize}

\item For the solution $(f^\v, u^\v)$ to the scaled VFP-INS system \eqref{VFP-INS}, we re-design a new expanded form of $(f^\v, u^\v)$ as follows:
\begin{equation}\label{new_expansion}
f^\v=g_0 + \v g_1\M + \v g\M, \quad u^\v =u_0+\v u,
\end{equation}
where $g_0=n_0M,\,g_1=\big[(v\c u_0)n_0-v\c\n_xn_0\big]\M$. Then the equation of remainder $g$ becomes
\begin{equation*}
\p_t g-\frac{1}{\v}v\c u\M+\frac{1}{\v^2}L g =\frac{1}{\v}R_0+R_1,
\end{equation*}
where the operator $L$ and $R_0,\,R_1$ are defined in \eqref{L-R0-R3}. In this case, the lower-order singular terms $\frac{1}{\v}R_0,\, R_1$ on the right-hand side turns out to be controlled by the dissipation induced by the linearized operator $L$ in \eqref{def-L} with the order of $\frac{1}{\v^2}$.
It is crucial to highlight that our newly-designed expansion is not born in the middle of nowhere, and actually, we are inspired by the formal derivation, which is specifically presented in Section \ref{sec:formal}. This methodology exhibits significant potential for broader applications in addressing other hydrodynamic limit problems, particularly when the classical expanded structure is not appropriate to complete the energy estimates.\\[1mm]

\item To achieve the pointwise convergence $(f^\v, u^\v)$, we construct the delicate energy/dissipation structures as in Section \ref{subsec:energy_dissipation} for the remainder function $(g,u)$ arising from our new expansion \eqref{new_expansion}. The energy/dissipation structures are constructed based on the well-known macro-micro decomposition, which perfectly matches our new expansion. Moreover, the energy/dissipation functionals also involve the high regularity space in both $x$ and $v$ variables such that the pointwise convergence can directly follow from the total energy estimate (Proposition \ref{total_energy_dissipation}) by the embedding theorem.

\end{itemize}


To our best knowledge, this is the first result to rigorously study the hydrodynamic limits, specifically from the scaled VFP-INS system \eqref{VFP-INS} to the NS-KS system \eqref{NS-KS}, in the stronger pointwise sense, where explicit convergence rate is achieved globally in time, thanks to a new expansion form and delicate high-order energy estimate. In our forthcoming work \cite{FangQiWen}, we will consider the compressible case. In contrast with the incompressible case where the density of the fluid is supposed to be constant, the appearances of the extra continuity equation and the density-dependent pressure function in the coupled compressible Navier-Stokes equations would bring new difficulties.


\subsection{Notation}
\label{subsec:notation}
Before stating the main result, we give some notations in this paper.
\begin{itemize}
\item $A \lesssim B \Leftrightarrow A \leq C B$, $A \sim B\Leftrightarrow C_1 A \leq B \leq C_2 A$, for some generic constants $C,C_1,C_2>0$ independent of $t$ and $\v$.

\item For multi-indices $\alpha = (\alpha_1, \alpha_2, \alpha_3)$ and $\beta = (\beta_1, \beta_2, \beta_3)$, we denote
\begin{equation*}
    \partial_x^\alpha = \partial_{x_1}^{\alpha_1} \partial_{x_2}^{\alpha_2} \partial_{x_3}^{\alpha_3}, \quad \partial_v^\beta = \partial_{v_1}^{\beta_1} \partial_{v_2}^{\beta_2} \partial_{v_3}^{\beta_3}.
\end{equation*}

\item $\binom{\a}{\a'}$ denotes the binomial coefficient.

\item Let $\nu=1+|v|^2$ and denote $\|\c\|_{\nu}$ by
\begin{equation*}
\|g\|_{\nu}=\big( \int_{\R^3}\int_{\R^3} |\n_v g|^2+|g|^2\nu \,dvdx \big)^{\frac{1}{2}}.
\end{equation*}

\item For $d,e\in\mathbb{N}$, we denote the following inner-product and the spaces:
\begin{equation*}
\begin{aligned}
\l u,w\r_x =& \int_{\R^3} uw \,dx,\,  \quad  \l f,g\r_v=\int_{\R^3} fg \,dv,\, \quad \|u\|_{L^2_x}= \l u,u\r_x^{\frac{1}{2}},\, \quad \|f\|_{L^2_v}= \l f,f\r_v^{\frac{1}{2}}, \\[4pt]
\l f,g \r_{x,v} =& \int_{\R^3} \int_{\R^3} fg \,dvdx,\, \quad \|f\|_{L^2_{x,v}}=\l f,f\r_{x,v}^{\frac{1}{2}},\\
\end{aligned}
\end{equation*}

\begin{equation*}
\begin{aligned}
H^d_{x}:=&\big\{u(x) \ \big| \ \|\p_x^\alpha u\|_{L^2_x} <\infty,\,\up{for any}\,|\a|\leq d\big\},\\[6pt]
H^d_{x,v}:=&\big\{f(x,v) \ \big| \ \|\p_x^\alpha\p_v^\beta f\|_{L^2_{x,v}} <\infty,\,\up{for any}\,|\a|+|\b|\leq d\big\},\\[6pt]
\mathcal{H}^d_{x,v}:=&\big\{f(x,v) \ \big| \ \|\p_x^\alpha\p_v^{\beta+1} f\|_{L^2_{x,v}} <\infty,\,\up{for any}\,|\a|+|\b|+1\leq d\big\},\\[6pt]
H^d_x H^e_v:=&\big\{f(x,v) \ \big| \ \|\p_x^\alpha\p_v^\beta f\|_{L^2_{x,v}} <\infty,\,\up{for any}\,|\a|\leq d,\,|\b|\leq e\big\},\\[6pt]
\mathcal{V}^d_x:=&\big\{u(x)\in H^d_x \ \big|\ \up{div}_x u=0\},
\end{aligned}
\end{equation*}
where the space $\mathcal{V}_x^d$ has the same norm as $H^d_x$.
\end{itemize}

\subsection{Main result}
\label{subsec:main}

Let $m_0:=n_0-1$, the NS-KS system \eqref{NS-KS} is rewritten as, for $(t,x)\in \R^+\times\R^3$,
\begin{equation}\label{NS-KS-1}
\left\{
\begin{aligned}
&\p_t m_0+\up{div}_x(u_0m_0)=\Delta_x m_0,\\[4pt]
&\p_t u_0+u_0\c\n_x u_0+\n_x p_0=\Delta_x u_0,\\[4pt]
&\up{div}_x u_0=0,\\[4pt]
&(m_0,u_0)|_{t=0}=(m_0^{in}(x), u^{in}_0(x)),\\[4pt]
&(m_0(t,x),u_0(t,x))\to (0,0),\quad \up{as}\, |x|\to +\infty.
\end{aligned}
\right.
\end{equation}

Here, we provide a direct decoupling strategy to show the well-posedness of the NS-KS system \eqref{NS-KS-1}: we can first solve the following Navier-Stokes (NS) equation
\begin{equation}\label{NS}
\left\{
\begin{aligned}
&\p_t u_0+u_0\c\n_x u_0+\n_x p_0=\Delta_x u_0,\\[4pt]
&\up{div}_x u_0=0,\\[4pt]
&u_0(0,x)=u^{in}_0(x),\\[4pt]
&u_0(t,x)\to 0, \quad \up{as} \quad |x|\to +\infty,
\end{aligned}
\right.
\end{equation}
to obtain the solution $(u_0,p_0)$; with the known $u_0$, we then solve the Kramer-Smoluchowski (KS) equation
\begin{equation}\label{KS equation}
\left\{
\begin{aligned}
&\p_t m_0+\up{div}_x(m_0 u_0)=\Delta_x m_0,\\[4pt]
&m_0(0,x)=m^{in}_0(x),\\[4pt]
&m_0(t,x)\to 0, \quad \up{as} \quad |x|\to +\infty,
\end{aligned}
\right.
\end{equation}
to get $m_0$. Finally, the solution $(m_0,u_0,p_0)$ to the NS-KS system \eqref{NS-KS-1} can be obtained.

\begin{proposition}\label{Solu-NS-KS}
Assume that the initial data $(m_0^{in}, u_0^{in})\in H^6_x \times \mathcal{V}_x^6$,
and there exists a small constant $\delta > 0$ such that, if $\|(m_0^{in},u_0^{in})\|_{H^6_x} \leq \delta$, then the system \eqref{NS-KS-1} admits the unique global-in-time solution $(m_0, u_0)$ satisfying
\begin{equation}
m_0 \in C^0 \left( [0,+\infty); H^6_x \right) \, \cap \, C^1 \left( [0,+\infty); H^4_x \right), \quad
u_0\in C^0 \left( [0,+\infty);\mathcal{V}_x^6 \right) \,\cap\, C^1\left( [0,+\infty);\mathcal{V}_x^4 \right).
\end{equation}
and, for any $0 < T < \infty$,
\begin{equation}\label{H6}
    \H(T;6) := \sup_{0\leq t\leq T}\|(m_0(t,\cdot),u_0(t,\cdot))\|^2_{H^6_x} \lesssim \delta.   
\end{equation}
\end{proposition}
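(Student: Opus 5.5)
We follow the decoupling strategy described above: first the Navier--Stokes equation \eqref{NS} with small $H^6_x$ data, then the linear Kramer--Smoluchowski equation \eqref{KS equation} with $u_0$ as a given coefficient.

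\textbf{Step 1: the Navier--Stokes equation.} We construct a local solution in $C^0([0,T_*];\mathcal{V}^6_x)$ by a standard method, either Galerkin/Friedrichs mollification or a contraction argument for the Leray-projected heat semigroup. We then prove an a priori estimate that is uniform in time. Apply $\p_x^\alpha$ with $|\alpha|\le 6$ to \eqref{NS}$_2$ and test against $\p_x^\alpha u_0$. Since $\div_x u_0=0$, the pressure term vanishes, and so does the top-order part $\l u_0\c\n_x\p_x^\alpha u_0,\p_x^\alpha u_0\r_x$. The commutator terms are bounded by the Moser-type estimate
\begin{equation*}
\sum_{1\le|\alpha|\le6}\big|\l [\p_x^\alpha,u_0\c\n_x]u_0,\p_x^\alpha u_0\r_x\big|\lesssim \|\n_x u_0\|_{H^2_x}\|\n_x u_0\|_{H^6_x}^2 .
\end{equation*}
This uses $H^2_x\hookrightarrow L^\infty_x$ in $\R^3$ and is safe because $|\alpha|\ge1$. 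The $L^2$ level is an exact identity. Summing over $\alpha$ gives
\begin{equation*}
\frac{d}{dt}\|u_0\|_{H^6_x}^2+2\|\n_x u_0\|_{H^6_x}^2\le C\|u_0\|_{H^6_x}\|\n_x u_0\|_{H^6_x}^2 .
\end{equation*}
A continuity (bootstrap) argument then applies: if $\|u_0\|_{H^6_x}\le 1/C$ on $[0,T]$, the right-hand side is absorbed, so $\|u_0(t)\|_{H^6_x}^2+\int_0^t\|\n_x u_0\|_{H^6_x}^2\le \|u_0^{in}\|_{H^6_x}^2\le\delta^2$. This closes for small $\delta$ and yields global existence. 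Uniqueness follows from an $L^2$ energy estimate on the difference of two solutions together with Gronwall's inequality.

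\textbf{Step 2: the Kramer--Smoluchowski equation.} The equation \eqref{KS equation} is linear in $m_0$, so global existence follows once $H^6_x$ bounds are established. The drift is $\div_x(m_0u_0)=u_0\c\n_x m_0$, again because $\div_x u_0=0$. Differentiating with $\p_x^\alpha$ and testing against $\p_x^\alpha m_0$, the top-order term vanishes as before. The commutator is bounded by
\begin{equation*}
C\big(\|\n_x u_0\|_{H^2_x}\|\n_x m_0\|_{H^6_x}+\|\n_x u_0\|_{H^6_x}\|\n_x m_0\|_{H^2_x}\big)\|\n_x m_0\|_{H^6_x}\lesssim \|u_0\|_{H^6_x}\|\n_x m_0\|_{H^6_x}^2 ,
\end{equation*}
where it suffices to put the highest derivative of $u_0$ against $\n_x m_0\in L^\infty_x$. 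Since Step 1 gives $\|u_0\|_{H^6_x}\lesssim\delta$, this is absorbed by the dissipation $\|\n_x m_0\|_{H^6_x}^2$. Hence
\begin{equation*}
\|m_0(t)\|_{H^6_x}^2+\int_0^t\|\n_x m_0\|_{H^6_x}^2\le\|m_0^{in}\|_{H^6_x}^2 ,
\end{equation*}
and uniqueness follows from linearity.

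\textbf{Step 3: time regularity and conclusion.} The $C^1([0,\infty);H^4_x)$ regularity is read off from the equations. For instance, $\p_t m_0=\Delta_x m_0-u_0\c\n_x m_0\in H^4_x$, since $H^5_x$ is an algebra. For $u_0$, we use $\n_x p_0=-\nabla_x\Delta_x^{-1}\div_x\div_x(u_0\otimes u_0)$, which lies in $H^5_x$. Continuity in time of the $H^6_x$ norm follows from the energy identity combined with weak continuity. Combining the two bounds gives $\H(T;6)\lesssim\delta^2\lesssim\delta$ for $\delta\le 1$.

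The main obstacle is Step 1: obtaining a global-in-time, $T$-independent bound for three-dimensional Navier--Stokes. The smallness bootstrap handles this because the cubic commutator is controlled by the dissipation alone, with no need for a lower-order $L^2$ norm of $u_0$. One must check that every commutator term contains at least one derivative on each of its factors, so that the bound is genuinely $\|u_0\|_{H^6_x}\|\n_x u_0\|^2_{H^6_x}$.
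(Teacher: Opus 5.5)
Your proposal is correct and follows the same decoupling route as the paper, which simply cites the small-data global theory for \eqref{NS} (Majda--Bertozzi) and standard linear parabolic theory for \eqref{KS equation}; your $H^6_x$ energy estimates with the smallness bootstrap make those citations explicit, and they are the same computation the paper itself runs in Lemma \ref{Hs-estimate}. One small slip: in your Kramer--Smoluchowski commutator bound the factor $\|\n_x u_0\|_{H^6_x}$ should read $\|\n_x u_0\|_{H^5_x}$, since at most $|\a|\le 6$ derivatives fall on $u_0$, and this is what makes your final inequality $\lesssim\|u_0\|_{H^6_x}\|\n_x m_0\|^2_{H^6_x}$ valid.
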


\begin{proof}
In fact, the solvability of the incompressible NS system \eqref{NS} can be referred to the argument as that in \cite{AJMALB02}, which is well-known for the initial data $u_0^{in}$ with ``smallness" assumption. Then, the existence of the solution $m_0$ to (\ref{KS equation}) can be derived by using the standard parabolic theory. 
\end{proof}


Based on $(m_0,u_0)$ of the NS-KS system \eqref{NS-KS-1}, we seek a particular solution to the VFP-INS system \eqref{VFP-INS} in the following form:
\begin{equation}\label{Special form solution}
\begin{aligned}
f^\v= &\ (1+m_0)M + \v[(v\c u_0)(1+m_0)-v\c\n_x m_0]M + \v g\M,\\[4pt]
u^\v= &\ u_0+\v u.
\end{aligned}
\end{equation}
which is inspired by the formal derivation provided in Section \ref{sec:formal}.



Now, we are in a position to state our main theorem and the complete proof can be found in Section \ref{subsec:proof_main_theorem}.

\begin{theorem}\label{Main-Limits}
Let $(m_0,u_0)$ be the solution to the NS-KS system \eqref{NS-KS-1} provided by Proposition \ref{Solu-NS-KS} with the initial data $(m_0^{in},u_0^{in}) \in H^6_x\times\mathcal{V}^6_x$, and assume that the initial data of \eqref{VFP-INS} are given by
\begin{equation}\label{Theorem-intial data of VFP-NS}
\begin{aligned}
f^{\v,in}(x,v)=&\ (1+m_0^{in})M+\v\big[(v\c u_0^{in})(1+m^{in}_0)-v\c\n_x m^{in}_0\big]M+\v g^{in}\M,\\[4pt]
u^{\v,in}(x)=&\ u_0^{in}+\v u^{in},
\end{aligned}
\end{equation} for some known functions $g^{in}\in  H^4_{x,v},\,
u^{in}\in  \mathcal{V}^4_x.$

Then there exist $g(t,x,v)$, $u(t,x)$, and a small constant $\delta_0 > 0$ such that, for any $ 0 < \v \leq 1$ and $\mathbb{E}(0) \leq \delta_0$, the VFP-INS system \eqref{VFP-INS} admits a unique global-in-time solution $(f^\v(t,x,v),u^\v(t,x))$ in the following form:
\begin{equation}\label{Form of the solution}
\begin{aligned}
f^\v(t,x,v)=&\ (1+m_0)M+\v\big[(v\c u_0)(1+m_0)-v\c\n_x m_0\big]M+\v g\M,\\[4pt]
u^\v(t,x)=&\ u_0+\v u,
\end{aligned}
\end{equation}
where
\begin{equation}\label{Space of the solution}
\begin{aligned}
&g \in C^0 \left([0,\infty);H^4_{x,v}\right) \cap L^2 \left([0,\infty);\mathcal{H}^5_{x,v}\right), \, \sqrt{M}g\in C^0 \left([0,\infty);H^2_{x}H^2_v\right),\\
&u \in C^0 \left([0,\infty);\mathcal{V}^4_x\right) \cap L^2\left([0,\infty);\mathcal{V}^5_x\right),
\end{aligned}
\end{equation}
and the global-in-time energy estimate:
\begin{equation}\label{The global estimate}
\sup_{t\geq0} \mathbb{E}(t)+c_0\int_0^\infty \mathbb{D}(s) \,ds \lesssim \mathbb{E}(0),
\end{equation} holds, where $c_0 > 0$ is a constant independent of $\v$, and the energy functional $\mathbb{E}(t)$ and dissipation functional $\mathbb{D}(t)$ are defined in \eqref{Def_total_functional}.
\end{theorem}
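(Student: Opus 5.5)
The plan is the standard local existence plus uniform a priori estimate plus continuity argument, carried out on the remainder system for $(g,u)$. Substituting \eqref{Form of the solution} into \eqref{VFP-INS} and using that $(m_0,u_0)$ solves \eqref{NS-KS-1}, I will write down the remainder system. For $g$ it reads $\p_t g+\frac1\v v\c\n_x g-\frac{1}{\v}v\c u\M+\frac{1}{\v^2}Lg=\frac1\v R_0+R_1$, where $L$ is the linearized Fokker--Planck operator in the $\M$-weighted variables; $R_0$ collects the forcing built from $(m_0,u_0)$, and $R_1$ collects the nonlinear terms $\mathrm{div}_v(u\,g\M)/\M$ and similar. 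For $u$ it is a forced Navier--Stokes equation in which the coupling $\frac1\v\int(v-\v u^\v)f^\v\,dv$ becomes, after cancellation of the leading order through the choice of $g_1$, the quantity $\frac1\v\big(\int v g\M\,dv - u\big)$ plus lower-order terms. The key algebraic point is that the first-order corrector $g_1$ kills the $O(\v^{-2})$ term of \eqref{Guess of form solution}. Because of this, the leftover forcing in $R_0$ is microscopic, i.e.\ orthogonal to $\ker L=\mathrm{span}\{\M\}$, or else it is macroscopic but only $O(1)$. I will verify this orthogonality explicitly, since it is what makes $\frac1\v R_0$ absorbable.

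Next comes the macro-micro decomposition $g=\P g+(\I-\P)g$ with $\P g=a\M$. The coercivity $\l Lg,g\r_v\gtrsim\|(\I-\P)g\|_\nu^2$ gives the dissipation $\frac{1}{\v^2}\|(\I-\P)g\|_\nu^2$. The terms $\frac1\v v\c u\M$ in the $g$-equation and $\frac1\v\int v g\M$ in the $u$-equation cancel in the combined $L^2$ estimate, leaving a good term $\frac1{\v^2}\|b-\v u\|^2$-type dissipation with $b=\int vg\M$. For the macroscopic part $a$, I will take moments of the $g$-equation to obtain $\p_t a+\frac1\v\mathrm{div}_x b=\dots$. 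Here $b$ is expressed through $-\frac1\v\n_x a$ plus micro terms, which yields an interaction functional $\v\l b,\n_x a\r$ producing $\|\n_x a\|^2$ dissipation. I will then define $\mathbb E,\mathbb D$ with $\p_x^\a$ derivatives up to order 4 and mixed $\p_x^\a\p_v^\b$ derivatives applied to $(\I-\P)g$; the $v$-derivatives are needed because $\p_v$ does not commute with $v\c\n_x$, which generates $\frac1\v\p_x$ terms controlled by the micro dissipation. The aim is the inequality $\frac{d}{dt}\mathbb E+c_0\mathbb D\lesssim(\mathbb E^{1/2}+\H(T;6)^{1/2})\mathbb D$.

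The forcing terms are handled as follows. The terms $\frac1\v\l R_0,(\I-\P)g\r$ are bounded by Cauchy--Schwarz, $\le \eta\frac1{\v^2}\|(\I-\P)g\|^2+C\|R_0\|^2$. The bound $\|R_0\|^2\lesssim\|(m_0,u_0)\|_{H^6}^2$ is time-integrable because Proposition \ref{Solu-NS-KS} also yields $\int_0^\infty\|\n(m_0,u_0)\|_{H^6}^2\lesssim\delta$ for the parabolic system. Time derivatives $\p_t m_0$ appear in $R_0$ and R_1, which is why $H^6$ regularity of the limit is needed. The nonlinear terms are estimated by Sobolev embedding $H^2_x\subset L^\infty_x$ and bounded by $\mathbb E^{1/2}\mathbb D$. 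Velocity weights $|v|$ coming from $\mathrm{div}_v(ug\M)$ are absorbed by the $\nu$-norm; for the macroscopic part the weight falls on $\M$ and is harmless.

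The main obstacle I expect is closing the high-order mixed-derivative estimates uniformly in $\v$. Two issues drive this: the $\frac1\v v\c\n_x$ commutator with $\p_v^\b$, and the cubic terms in $\mathrm{div}_v(u g\M)/\M=\M^{-1}\mathrm{div}_v(\cdot)$, which yield $v u g$ growth. To handle these I will weight the derivative levels with $\v$-independent but hierarchically small constants, and pair each micro estimate with the matching $\v^{-2}$ dissipation. Once the a priori estimate holds, local existence (standard, via iteration on the linear problem) and a continuity argument with $\mathbb E(0)\le\delta_0$ and $\delta$ small give global existence, \eqref{The global estimate} and \eqref{Space of the solution}. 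Uniqueness follows from an $L^2$ energy estimate on the difference of two solutions.
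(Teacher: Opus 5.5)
Your overall architecture is the paper's: local existence, an a priori estimate built on a macro--micro decomposition with mixed $\p_x^\a\p_v^\b$ derivatives of the microscopic part, and a continuity argument. However, the dissipation you rely on is not available, and your absorption step breaks because of it.

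You take $\P g=a\M$, the projection onto $\ker L$, and claim both $\frac{1}{\v^2}\|(\I-\P)g\|_\nu^2$ and $\frac{1}{\v^2}\|b-\v u\|^2_{L^2_x}$ as dissipation. With this $\P$, $(\I-\P)g$ still contains the momentum mode $v\c b\M$. Since the drag only exchanges momentum between particles and fluid, that mode is damped only relative to $\v u$.

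The cross terms $-\frac{1}{\v}\l u,b\r_x$ from the two equations do not cancel. They add up and, together with $\frac{1}{\v^2}\|b\|^2_{L^2_x}$ and the damping $\|u\|^2_{L^2_x}$ coming from $-\frac{1}{\v}(b-\v u)$, form the perfect square $\frac{1}{\v^2}\|b-\v u\|^2_{L^2_x}$. This needs the exact splitting $\l Lg,g\r_{x,v}=\|b\|^2_{L^2_x}+\l L(\I-\P)g,(\I-\P)g\r_{x,v}$ for the projection onto $\mathrm{span}\{\M,v\M\}$, which follows from $L(v\M)=v\M$. With a lossy constant in front of $\|b\|^2_{L^2_x}$ the quadratic form would be indefinite. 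In the end neither $\frac{1}{\v^2}\|b\|^2_{L^2_x}$ nor $\|u\|^2_{L^2_x}$ is dissipated.

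Hence your step $\frac{1}{\v}\l R_0,(\I-\P)g\r\le\eta\frac{1}{\v^2}\|(\I-\P)g\|^2+C\|R_0\|^2$ fails for every $v\M$-component of the forcing. For example:
$\frac{1}{\v}\M^{-1}\mathrm{div}_v(u_0g\M)$ and $\frac{1}{\v}\M^{-1}\mathrm{div}_v(um_0M)$ produce $\frac{1}{\v}\l u_0a+m_0u,b\r_x$, and $-\p_t g_1=-v\c\p_t[u_0(1+m_0)-\n_x m_0]\M$ produces $O(1)$ pairings with $b$.
The same defect reappears in your mixed-derivative estimates. $\I-\P_0$ does not remove the source $\frac{1}{\v}v\c u\M$, and its $\p_v^\b$-pairings at $\a=0$ need $\|u\|_{L^2_x}$ in the dissipation.

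The missing idea is to split off the momentum mode, i.e.\ to use the paper's four-dimensional projection $\P=\P_0\oplus\P_1$ with $C_0\|(\I-\P)f\|^2_\nu+\|b\|^2_{L^2_x}\le\l Lf,f\r_{x,v}$. Then $(\I-\P)(v\c u\M)=0$, so the equation for $(\I-\P)g$ carries no $u$-source. What makes the linear $O(\v^{-1})$ sources absorbable is not orthogonality to $\ker L$. It is the fact that $(\I-\P_0)(v\c\n_x g_1)$ and $\M^{-1}\mathrm{div}_v(u_0g_1\M)$ are even in $v$, hence orthogonal to $v\M$ as well.

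Every remaining pairing with $b$ must be split as $b=(b-\v u)+\v u$, as the paper does for $B_{113}$, $B_{121}$, $B_{122}$. The leftover $\v u$-pieces are then controlled through the fluid equation:
they combine with $\l R_2,u\r_x$, where $R_2=-(u_0a+um_0+\v ua)$, into $\frac{1}{\v}\l u_0a+m_0u+\v ua,\,b-\v u\r_x$;
in $\v\l\p_t[u_0(1+m_0)-\n_x m_0],u\r_x$ the gradient part vanishes by $\mathrm{div}_x u=0$, and $\p_t u_0$ is rewritten via the Navier--Stokes equation.

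Once this is fixed, your lighter macroscopic step works. You use only the interaction functional $\v\l\n_x a,b\r_x$, not the paper's $\Gamma_{ij}$ functional, and that suffices because $\frac{1}{\v}\|\n_x b\|^2_{H^3_x}\lesssim\frac{1}{\v^2}\|b-\v u\|^2_{H^4_x}+\|\n_x u\|^2_{H^4_x}$. Also, the pure source must be bounded by gradient and time-derivative norms of $(m_0,u_0)$. Bounding it by $\|(m_0,u_0)\|^2_{H^6_x}$ does not work, since that quantity need not be time-integrable.
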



Thanks to the Theorem \ref{Main-Limits}, the following corollary can be obtained by applying the Sobolev embedding $H^2\hookrightarrow L^\infty$, and the complete proof is given in Section \ref{subsec:proof_main_corollary}.

\begin{corollary}\label{Conergence}
Under the conditions of Theorem \ref{Main-Limits}, the following pointwise convergence:
\begin{equation}\label{Conv of t,x}
\begin{aligned}
|f^\v(t,x,v)-(1+m_0(t,x))M(v)|+|u^\v(t,x)-u_0(t,x)|\lesssim \v
\end{aligned}
\end{equation} holds for all $(t,x,v)\in [0,\infty) \times\R^3\times\R^3$.
\end{corollary}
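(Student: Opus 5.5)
We prove Corollary \ref{Conergence} by subtracting the leading term of the representation \eqref{Form of the solution} and bounding each remaining piece in $L^\infty_{t,x,v}$ uniformly in $\v$. From \eqref{Form of the solution},
\begin{equation*}
f^\v-(1+m_0)M=\v\big[(v\c u_0)(1+m_0)-v\c\n_x m_0\big]M+\v g\M,\qquad u^\v-u_0=\v u .
\end{equation*}
So it suffices to show that
\begin{equation*}
\sup_{t,x,v}\big|\big[(v\c u_0)(1+m_0)-v\c\n_x m_0\big]M\big| \lesssim 1,\qquad \sup_{t,x,v}|g\M|\lesssim 1,\qquad \sup_{t,x}|u|\lesssim 1,
\end{equation*}
with constants independent of $\v$ and $t$.

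\textbf{The explicit term.} We use that $|v|M(v)$ is bounded on $\R^3$. By the Sobolev embedding $H^2_x\hookrightarrow L^\infty_x$ and \eqref{H6},
\begin{equation*}
\|u_0\|_{L^\infty_x}+\|m_0\|_{L^\infty_x}+\|\n_x m_0\|_{L^\infty_x}\lesssim \|(m_0,u_0)\|_{H^6_x}\lesssim \delta^{1/2}.
\end{equation*}
This holds uniformly in $t$ and bounds the first term by a constant.

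\textbf{The fluid remainder.} Theorem \ref{Main-Limits} gives $u\in C^0([0,\infty);\mathcal V^4_x)$. Its norm is controlled by $\sup_t\mathbb E(t)\lesssim \mathbb E(0)\le\delta_0$ via \eqref{The global estimate}, provided $\mathbb E$ contains $\|u\|_{H^4_x}^2$, as its definition in \eqref{Def_total_functional} should ensure. Then $H^2_x\hookrightarrow L^\infty_x$ gives $\|u\|_{L^\infty_x}\lesssim 1$.

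\textbf{The kinetic remainder.} Here we work with $h=\M g$, which Theorem \ref{Main-Limits} places in $C^0([0,\infty);H^2_xH^2_v)$. The anisotropic embedding $H^2_xH^2_v(\R^3\times\R^3)\hookrightarrow L^\infty_{x,v}$ follows from applying $H^2\hookrightarrow L^\infty$ first in $v$ and then in $x$ (the mixed-derivative norms commute suitably). This gives $\|\M g\|_{L^\infty_{x,v}}\lesssim \|\M g\|_{H^2_xH^2_v}$. Alternatively, since $g\in H^4_{x,v}$, the embedding $H^4(\R^6)\hookrightarrow L^\infty(\R^6)$ bounds $g$, and $\M\le 1$ then bounds $g\M$.

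\textbf{Main obstacle.} The only delicate point is that the bound on $\|\M g\|_{H^2_xH^2_v}$ (or $\|g\|_{H^4_{x,v}}$) must be uniform in $t$ and $\v$. We check from \eqref{Def_total_functional} that $\mathbb E(t)$ dominates these norms with $\v$-independent constants, without any $\v$-weights attached to the relevant pieces. Combining the three bounds and multiplying by $\v$ yields \eqref{Conv of t,x}.
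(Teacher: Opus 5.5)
Your proposal is correct and follows essentially the same route as the paper: subtract the leading term, apply $H^2\hookrightarrow L^\infty$ (in the mixed $H^2_xH^2_v$ form), and bound every remaining piece by $\sup_t\mathbb{E}(t)\lesssim\mathbb{E}(0)$ from \eqref{The global estimate}. The one step you only assert is that $\mathbb{E}$ controls $\|g\M\|_{H^2_xH^2_v}$; the paper makes it explicit by splitting $g=(a+v\c b)\M+(\I-\P)g$, and you need this split too, because $\mathbb{E}$ contains $\|g\|_{H^4_xL^2_v}$, $\n_v(\I-\P)g$ and $(a,b)$ but no $v$-derivative of $g$ itself.
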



\subsection{Organization of the paper}
\label{subsec:organization}
This paper is organized as follows: In Section \ref{sec:formal}, the formal derivation is shown via the Hilbert expansion. In Section \ref{sec:energy}, we develop a global-in-time energy estimate for the remainder/error system. Based on the delicate energy estimate above, our main results are proved in Section \ref{sec:proof_main}.





\section{Formal analysis via Hilbert expansion}
\label{sec:formal}

In this section, we first apply the Hilbert expansion to make the formal derivation:
assuming
\begin{equation}\label{Formal form}
\begin{aligned}
&f^\v=f_0+\v f_1+\v^2 f_2+\cdots,\\
&u^\v=u_0+\v u_1+\v^2 u_2+\cdots,
\end{aligned}
\end{equation}
and plugging the expansion \eqref{Formal form} into the VFP-INS system $\eqref{VFP-INS}_1$, we obtain

\begin{equation}\label{Formally form about f}
\begin{aligned}
&\p_t(f_0+\v f_1+\v^2 f_2+\cdots)+\frac{1}{\v}v\c\n_x(f_0+\v f_1+\v^2 f_2+\cdots )\\
=&\frac{1}{\v^2}\up{div}_v\Big\{\n_v(f_0+\v f_1+\v^2 f_2+\cdots)+\big[v-\v(u_0+\v u_1+\v^2 u_2+\cdots)\big]  (f_0+\v f_1+\v^2 f_2+\cdots)\Big\}.
\end{aligned}
\end{equation}

For $O(\frac{1}{\v^2})$ term of \eqref{Formally form about f}, we obtain the equation only involving $f_0$,
\begin{equation*}
    \up{div}_v(\n_v f_0+v f_0)=\up{div}_v[M\n_v(\frac{f_0}{M})]=0,
\end{equation*}
which implies that
\begin{equation}\label{macar-n0}
f_0=n_0 M,
\end{equation}
for some functions $n_0(t,x)$.

For $O(\frac{1}{\v})$ of \eqref{Formally form about f}, we find
\begin{equation}\label{macar-1}
v\c\n_x f_0+\up{div}_v(u_0 f_0)=\up{div}_v[M\n_v(\frac{f_1}{M})].
\end{equation}
By further substituting \eqref{macar-n0} into \eqref{macar-1} and making simplification, we have
\begin{equation*}
\up{div}_v\big[M(\n_v\frac{f_1}{M}+\n_x n_0-u_0 n_0)\big]=0,
\end{equation*}
from which, we can solve $f_1$ in the following form
\begin{equation}\label{Solu-f1}
f_1=\big[(v\c u_0)n_0-v\c\n_x n_0\big]M.
\end{equation}

For the $O(1)$ of \eqref{Formally form about f}, we have
\begin{equation*}
\p_t f_0+v\c\n_x f_1=\up{div}_v(\n_v f_2+vf_2-u_0f_1-u_1f_0),
\end{equation*}
which implies that
\begin{equation}\label{Equa of n0}
\int_{\R^3} \p_t f_0+v\c\n_x f_1 \,dv=0.
\end{equation}

Finally, substituting \eqref{macar-n0} and \eqref{Solu-f1} into \eqref{Equa of n0} gives
\begin{equation}\label{n0}
\p_t n_0+\up{div}_x(n_0 u_0)=\Delta_x n_0.
\end{equation}

On the other hand, substituting the expansion \eqref{Formal form} into the VFP-INS system $\eqref{VFP-INS}_2$, it follows that
\begin{multline}\label{Formal VFP-2}
\p_t(u_0+\v u_1+\cdots)+\mathbb{P}\big[(u_0+\v u_1+\cdots)\c\n_x(u_0+\v u_1+\cdots)\big]\\
=\Delta_x(u_0+\v u_1+\cdots)+\frac{1}{\v}\mathbb{P}\Big\{\int_{\R^3}\big[v-\v(u_0+\v u_1+\cdots)\big](f_0+\v f_1+\cdots)dv\Big\},
\end{multline}
where the symbol $\mathbb{P}$ is the Leray projection on $\R^3$.

The $O(\frac{1}{\v})$ term of \eqref{Formal VFP-2} reads
\begin{equation*}
\mathbb{P}\big(\int_{\R^3} vf_0 \,dv \big)=0.
\end{equation*}
which automatically holds if we consider $f_0=n_0(t,x)M$ in \eqref{macar-n0}.

The $O(1)$ term of \eqref{Formal VFP-2} reads
\begin{equation*}
\p_t u_0+u_0\c\n_x u_0-\Delta_xu_0 =\mathbb{P}\big(\int_{\R^3}v f_1 -u_0 f_0 \,dv \big),
\end{equation*}
which implies that
\begin{equation}\label{u0}
\p_t u_0+u_0\c\n_x u_0+\n_x (p_0+n_0)-\Delta_xu_0=0,
\end{equation}
since $f_0=n_0 M$ and $f_1=\big[(v\c u_0)n_0-v\c\n_x n_0\big]M$.

At last, substituting the expansions \eqref{Formal form} into the VFP-INS system $\eqref{VFP-INS}_3$ gives
\begin{equation}\label{Divergence free of u}
\up{div}_x u_j=0, \quad j = 0, 1, 2,...
\end{equation}
which implies the incompressible condition of $u_j$ for each order.

Thus, combining \eqref{n0}, \eqref{u0} and \eqref{Divergence free of u} gives the NS-KS system \eqref{NS-KS} in a formal manner.


\section{Energy estimate}
\label{sec:energy}

The essence of the proof for our main theorems is the global-in-time energy estimate uniform in $0 < \v \leq 1$ for the small initial data, i.e., the estimate \eqref{The global estimate}. To achieve this, our proof of \eqref{The global estimate} can be outlined as follows: we first present the remainder/error system of $(g,u)$ in subsection \ref{subsec:remainder}, and the specific designs of corresponding energy and dissipation functionals are shown in subsection \ref{subsec:energy_dissipation}; as the whole principle of the energy estimate is trying to find sufficient dissipative or decay structures to control the singularity terms, we have to rely on the estimate of the NS-KS system \eqref{NS-KS-1} in subsection \ref{subsec:apriori_NS-KS} and capture such ``good" dissipative structure from both microscopic part and macroscopic part (micro-macro decomposition) of the reminder system in subsection \ref{subsec:remainder_kinetic} and \ref{subsec:remainder_fluid}, respectively; we finally summarize all the estimates and conclude the total energy estimate in a well-designed and closed manner in subsection \ref{subsec:proof_total_energy}.

\subsection{Micro-macro decomposition}
Applying micro-macro decomposition in \cite{DFT10}, we decompose $g$ as the sum of the macroscopic part $\P g$ and the microscopic part $(\I-\P)g$ as follows:
\begin{equation}
g=\P g+(\I-\P)g.
\end{equation}
The velocity orthogonal projection $\P\up{:}\,L^2_v\to\up{Span}\{\sqrt{M},v_1\sqrt{M},v_2\sqrt{M},v_3\sqrt{M}\}$ is given by
\begin{equation}\label{def-L-P}
\begin{aligned}
&\P=\P_0\oplus \P_1,\, \quad \,\P_0 g=a\M,\, \quad \,\P_1 g=v\c b\M
\end{aligned}
\end{equation}
with
\begin{equation}\label{ab}
    a=\int_{\R^3} g \M \,dv \quad \text{and} \quad b=\int_{\R^3} gv \M \,dv.
\end{equation}

The linearized Fokker-Planck operator $L$ is defined as
\begin{equation}\label{def-L}
L g=-\frac{1}{\M}\up{div}_v\big[M\n_v(\frac{g}{\M})\big],
\end{equation}
then, the linear operator $L $ is decomposed by
\begin{equation}\label{Decop-L}
L g=L(\I-\P)g+\P_1g=L(\I-\P)g+v\c b\M.
\end{equation}

Notice that $\I-\P_0$, $\I-\P$ is self-adjoint in $H^d_{x,v}$, i.e.
\begin{equation}\label{self-adjoin of I-P}
\begin{aligned}
\l\p_x^\alpha(\I-\P_0)f,\p_x^\alpha g\r_{x,v} = \l\p_x^\alpha f,\p_x^\alpha(\I-\P_0)g\r_{x,v}, \quad
\l\p_x^\alpha(\I-\P)f,\p_x^\alpha g\r_{x,v} = &\l\p_x^\alpha f,\p_x^\alpha(\I-\P)g\r_{x,v},\\
\end{aligned}
\end{equation}
for any $\alpha$ and $f,g\in H^d_{x,v}$, and it is easy to verify that
\begin{equation}\label{I-P}
\begin{aligned}
(\I-\P_0)(\I-\P)=(\I-\P),\quad (\I-\P_0)(\I-\P_0)=(\I-\P_0),\quad (\I-\P)(\I-\P)=(\I-\P).
\end{aligned}
\end{equation}

According to \cite{CJADRJMA11}, the operator $L$ enjoys the dissipative property, i.e., there exists a constant $C_0>0$ such that
\begin{equation}\label{Poin inequality}
C_0\|(\I-\P)f\|_{\nu}^2+\|b\|^2_{L^2_x}  \leq \l L f, f\r_{x,v}.
\end{equation}

\subsection{The remainder/error system}
\label{subsec:remainder}

Based on the formal analysis, we introduce remainder/error terms of the VFP-INS system \eqref{VFP-INS} as follows:
\begin{equation}\label{Solu of special form-0}
g=\frac{f^\v-g_0-\v g_1\M}{\v \M}, \quad u=\frac{u^\v-u_0}{\v},
\end{equation}
in other words, we seek a solution to VFP-INS system \eqref{VFP-INS} in a special form:
\begin{equation}\label{Solu of special form-1}
f^\v=g_0+\v g_1 \M+\v g\M,  \quad u^\v=u_0+\v u,
\end{equation}
where $g_0=n_0M,\,g_1=\big[(v\c u_0)n_0-v\c\n_x n_0\big]\M$ and $(n_0,u_0)$ is the solution to the NS-KS system \eqref{NS-KS}.

Noting $m_0=n_0-1$ and substituting the expansion \eqref{Solu of special form-1} into \eqref{VFP-INS}, we can obtain the remainder system of $(g,u)$:
\begin{equation}\label{Remainder equations}
\left\{
\begin{aligned}
& \p_t g-\frac{1}{\v}v\c u\M+\frac{1}{\v^2}L g=\frac{1}{\v}R_0+R_1,\\
& \p_t u+u_0\c\n_x u+u\c\n_x u_0-\frac{1}{\v}(b-\v u)+\frac{1}{\v}\n_x(p-p_0) - \Delta u =R_2,\\
& \up{div}_x u=0,\\
& g(0,x,v)=g^{in}(x,v), \quad u(0,x)=u^{in}(x),\\
& \big( g(t,x,v),u(t,x) \big)\to (0,0),\,\, \up{as}\, |x|\to +\infty,
\end{aligned}
\right.
\end{equation}
where $(m_0,u_0)$ is the solution to the NS-KS system \eqref{NS-KS-1} as in Proposition \ref{Solu-NS-KS} and
\begin{equation}\label{L-R0-R3}
\begin{aligned}
L g=&-\frac{1}{\M}\up{div}_v[M\n_v(\frac{g}{\M})]=-\Delta_v g+\frac{|v|^2}{4}g-\frac{3}{2}g,\\[2pt]
R_0=&-(\I-\P_0)(v\c\n_x g_1)-v\c\n_x g-\frac{1}{\M}\up{div}_v(u_0 g_1\M+u_0 g\M+um_0 M),\\[2pt]
R_1=&-\p_t g_1-\frac{1}{\M}\up{div}_v(u g_1\M+ug\M),\\[2pt]
R_2=&-(u_0 a+u m_0+\v ua),\\[2pt]
g_1=&\big[(v\c u_0)m_0+v\c u_0-v\c\n_x m_0\big]\M.
\end{aligned}
\end{equation}

For the remainder system \eqref{Remainder equations} above, we first give the local well-posed result:

\begin{proposition}\label{Local-in-time}
Under the conditions of Proposition \ref{Solu-NS-KS} and the initial data $(g^{in},u^{in} ) \in H^4_{x,v} \times \mathcal{V}^4_x$, for any $0<\v\leq 1$,
there exists a $T_\v > 0$ such that the remainder system \eqref{Remainder equations} admits a unique solution $(g,u)$ with
$$g\in C^0 \left([0,T_\v];H^4_{x,v}\right)\,\cap\, L^2\left([0,T_\v];\mathcal{H}^5_{x,v}\right) \quad \text{and} \quad
u\in C^0 \left([0,T_\v];\mathcal{V}^4_x\right) \,\cap\, L^2\left([0,T_\v];\mathcal{V}^5_x\right).$$
\end{proposition}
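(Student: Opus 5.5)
The plan is a standard iteration/contraction argument at fixed $\v\in(0,1]$. Since $\v$ is fixed, the singular factors $1/\v$ and $1/\v^2$ are just large constants and the existence time $T_\v$ is allowed to depend on them. We first linearize \eqref{Remainder equations}. Given $(g^k,u^k)$ in the solution space
\[
X_T:=\big(C^0([0,T];H^4_{x,v})\cap L^2([0,T];\mathcal{H}^5_{x,v})\big)\times\big(C^0([0,T];\mathcal{V}^4_x)\cap L^2([0,T];\mathcal{V}^5_x)\big),
\]
with $(g^0,u^0)=(g^{in},u^{in})$, we define $(g^{k+1},u^{k+1})$ by solving two linear problems. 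The first is a linear kinetic equation,
\[
\p_t g^{k+1}+\frac{1}{\v}v\c\n_x g^{k+1}+\frac{1}{\v^2}Lg^{k+1}=\text{(terms of }R_0,R_1\text{ with }(g,u)\text{ replaced by }(g^k,u^k)\text{ in the nonlinear slots)}+\frac{1}{\v}v\c u^k\M .
\]
The second is a linear Stokes-type system,
\[
\p_t u^{k+1}-\Delta u^{k+1}+u^{k+1}+\frac1\v\n_x \pi^{k+1}=-u_0\c\n_x u^k-u^k\c\n_x u_0+\frac1\v b^k+R_2(g^k,u^k),\qquad \div_x u^{k+1}=0 .
\]
Here $b^k$ is the moment \eqref{ab} of $g^k$. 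Existence for the kinetic equation follows from standard theory for linear Fokker--Planck equations with transport: the operator $L=-\Delta_v+\frac{|v|^2}{4}-\frac32$ is the harmonic oscillator and is coercive, the transport term $v\c\n_x$ is skew-adjoint in $x$, and one can use a Galerkin scheme or vanishing-viscosity regularization in $x$ together with the a priori bounds below. Existence for the Stokes system follows from Leray projection and heat-semigroup theory. The coefficients $m_0,u_0,\p_t m_0,\p_t u_0$ are controlled by Proposition \ref{Solu-NS-KS}. The source $\p_t g_1$ then lies in $C^0H^4_x$ in $x$, but only after using the $C^1H^4_x$ bound on $m_0,u_0$ and the fact that $g_1$ contains $\n_x m_0$. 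This forces us to check carefully that $\p_t\n_x m_0\in L^2_tH^4_x$; that should follow from parabolic smoothing of \eqref{KS equation}, since $m_0\in L^2_tH^7_x$ when the data are in $H^6$.

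The core step is a uniform bound for the iterates in $X_T$. We apply $\p_x^\a\p_v^\b$ with $|\a|+|\b|\le4$ to the linear kinetic equation and take the $L^2_{x,v}$ inner product with $\p_x^\a\p_v^\b g^{k+1}$. The operator $L$ provides $\frac{1}{\v^2}\big(\|\n_v\p^\a_x\p^\b_v g\|^2+\|\,|v|\p^\a_x\p^\b_v g\|^2\big)$ up to a lower-order $L^2$ term, which gives the $\mathcal{H}^5_{x,v}$ dissipation in $v$. The velocity derivatives produce commutators $[\p_v^\b,v\c\n_x]=\p_x\p_v^{\b-1}$ and $[\p_v^\b,|v|^2]$. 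The first involves one fewer $v$-derivative and one more $x$-derivative, hence it stays inside the total order 4 and is absorbed by Cauchy--Schwarz with a $C_\v$ constant. The second is controlled by the $|v|$-weighted dissipation.

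The dangerous terms are those in $R_0,R_1$ carrying $\frac1\v v\c\n_x g^k$ and $\div_v(u_0 g^k\M)$. These lose one $x$-derivative or one $v$-derivative relative to $H^4$. We handle them by putting the extra derivative on the test function through integration by parts in $v$, or, for $v\c\n_x g$, by keeping that term on the left-hand side as transport, since it is linear in the unknown. The products $u^k g^k$ are then estimated using $H^2\hookrightarrow L^\infty$ in $x$. For the fluid part, the $L^2_tH^5$ gain comes from the heat operator.

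Combining these, we aim for
\[
\sup_{[0,T]}\|(g^{k+1},u^{k+1})\|^2_{H^4}+\int_0^T\mathcal{D}\,dt\le C_\v\Big(\|(g^{in},u^{in})\|^2+T\,\Phi\big(\sup\|(g^k,u^k)\|\big)+\delta\Big)e^{C_\v T}.
\]
Choosing $T_\v$ small then keeps the iterates in a fixed ball. Applying the same estimates to differences at the $L^2$ level (or $H^3$) gives a contraction for small $T_\v$. The limit lies in $X_{T_\v}$ by weak compactness and lower semicontinuity, time continuity follows by the usual Lions--Magenes/energy-equality argument, and uniqueness follows from the difference estimate.

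The main obstacle I expect is the loss of one derivative from the mixed terms $\frac1\v v\c\n_x g$ and $\div_v(u g\M)$ in the top-order estimate with $|\b|\ge1$. The remedy I would try first is to treat all terms linear in $g$ (including $u_0 g$) as part of the linear operator, so that only the genuinely nonlinear $u^k g^k$ appears as a source. That term is handled with the $v$-dissipation after one integration by parts in $v$, paired with $u^k\in L^2_tH^5_x$.
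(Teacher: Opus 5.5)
Your proposal is correct and follows essentially the same route as the paper, whose proof simply invokes a standard contraction-mapping argument at fixed $\v$ (referring to \cite{GHMAZP10}). Your sketch fills in that argument and identifies the two points that make it work: the terms linear in $g$ ($\frac{1}{\v}v\cdot\nabla_x g$ and $\frac{1}{\v}\frac{1}{\M}\mathrm{div}_v(u_0 g\M)$) are kept in the linear operator, and $\frac{1}{\M}\mathrm{div}_v(u^k g^k\M)$ is moved onto $\nabla_v$ of the test function and absorbed by the $\frac{1}{\v^2}$-dissipation. Your sentence claiming $\p_t g_1 \in C^0H^4_x$ is inaccurate, since $C^1H^4_x$ for $m_0$ only gives $\p_t\nabla_x m_0\in C^0H^3_x$; but you immediately correct this by invoking the parabolic bound $\p_t\nabla_x m_0\in L^2_tH^4_x$, which is the quantity in $\mathcal{D}_{ma}$ and is what is actually needed.
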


\begin{proof}
The proof is based on the standard contractive mapping argument. We refer to \cite{GHMAZP10} for more details.
\end{proof}

\subsection{Total energy estimate}
\label{subsec:energy_dissipation}

To state the energy estimate, we will introduce the temporal energy and dissipation functionals of different parts:
\begin{itemize}

\item Energy and the dissipation functionals of macroscopic quantities $(m_0, u_0)$:
\begin{equation}\label{macro energy functionals and energy dissipation functionals}
\begin{aligned}
\E_{ma}(t)=&\|m_0\|^2_{H^6_x}+\|u_0\|^2_{H_x^5},\\[4pt]
\D_{ma}(t)=&\|(\n_x m_0,\n_x u_0)\|^2_{H^5_x}+\|(\p_t\n_x m_0,\p_t u_0)\|_{H^4_x}^2.
\end{aligned}
\end{equation}

\item Energy and the dissipation functionals of remainder/error $(g,u)$:
\begin{equation}\label{part of energy functionals}
\begin{aligned}
\E_{mi,K,1}(t)=&\| g\|^2_{H^4_x L_v^2}+\|u\|^2_{H^4_x},\\[4pt]
\E_{mi,K,2}(t)=&\sum_{|\a|+|\b|\leq 3}\bar{C}_{\a,\b}\|\p_x^\a\p_v^{\b+\b'}(\I-\P)g\|^2_{L^2_{x,v}},\\[4pt]
\E_{mi,F}(t)=&\|(a,b)\|_{H^3_x}^2+2\sum_{|\a|=0}^3\sum_{i,j=1}^3\l\p_x^{\a}(\p_{x_i} b_j+\p_{x_j} b_i),\p_x^\a(\I-\P)g(v_iv_j-1)\M\r_{x,v}+\v\sum_{|\a|=0}^3\l\p_x^{\a+\a'}a,\p_x^\a b\r_x,\\[4pt]
\D_{mi,K,1}(t)=&\frac{1}{\v^2}\big[\sum_{|\a|\leq 4}\|\p_x^\a(\I-\P)g\|_{\nu}^2+\|b-\v u\|_{H^4_x}^2\big]+\|\n_x u\|_{H^4_x}^2,\\[4pt]
\D_{mi,K,2}(t)=&\frac{1}{\v^2}\sum_{|\a|+|\b|\leq 3}\|\p_x^\a\p_v^{\b+\b'}(\I-\P)g\|_{\nu}^2,\\[4pt]
\D_{mi,F}(t)=&\frac{1}{\v}\|(\n_x b,\up{div}_x b)\|^2_{H^3_x}+\|\n_x a\|^2_{H^3_x},\\[4pt]
\end{aligned}
\end{equation}
where $\bar{C}_{\a,\b} > 0$ are constants and $|\a'|=|\b'|=1$.

\item Total energy functional $\E(t)$ and dissipation functional $\D(t)$:
\begin{equation}\label{energy functionals and energy dissipation functional}
\begin{aligned}
\E(t)=&\lambda_1\E_{mi,F}(t)+\sum_{i=2}^3\lambda_i\E_{mi,K,i-1}(t)+\lambda_4\E_{ma}(t),\\[4pt]
\D(t)=&\D_{mi,F}(t)+\sum_{i=1}^2\D_{mi,K,i}(t)+\D_{ma}(t),
\end{aligned}
\end{equation}
where $\lambda_i > 0, 1\leq i\leq 4$ are constants given in \eqref{lambda constants}.

\end{itemize}

\begin{proposition}\label{total_energy_dissipation}
Let $(g,u)$ be the solution to the remainder system \eqref{Remainder equations}, there exists constants $C,\,\tilde{C} > 0$ independent of $\v$ and $t$ such that, for any $t \geq 0$,
\begin{equation}\label{total_energy_estimate}
\begin{aligned}
\frac{1}{2}\frac{d}{dt}\E(t)+\tilde{C}\D(t) \leq C\E^{\frac{1}{2}}(t)\D(t),
\end{aligned}
\end{equation}
where the energy and dissipation functionals $\E(t)$ and $\D(t)$ are defined in \eqref{energy functionals and energy dissipation functional}.
\end{proposition}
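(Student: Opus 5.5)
The plan is to establish \eqref{total_energy_estimate} by proving four separate differential inequalities, one for each block of $\E$, and then combining them with the weights $\lambda_1,\dots,\lambda_4$. The order of work is as follows.

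\emph{Step 1 (macroscopic part).} Apply $\p_x^\a$ with $|\a|\le 6$ to the $m_0$-equation and with $|\a|\le5$ to the $u_0$-equation of \eqref{NS-KS-1}, and take inner products with $\p_x^\a m_0,\p_x^\a u_0$. Using the equations themselves to express $\p_t\n_x m_0,\p_t u_0$ in $H^4_x$, this gives $\frac{d}{dt}\E_{ma}+c\D_{ma}\lesssim \E_{ma}^{1/2}\D_{ma}$. Proposition \ref{Solu-NS-KS} guarantees that $\E_{ma}$ stays small.

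\emph{Step 2 (kinetic estimate $\E_{mi,K,1}$).} Apply $\p_x^\a$, $|\a|\le4$, to \eqref{Remainder equations}$_1$ and test with $\p_x^\a g$; apply $\p_x^\a$ to \eqref{Remainder equations}$_2$ and test with $\p_x^\a u$. The key structural cancellation is the following. By \eqref{Decop-L} and \eqref{Poin inequality}, $\frac1{\v^2}\l Lg,g\r$ equals $\frac1{\v^2}\l L(\I-\P)g,(\I-\P)g\r+\frac1{\v^2}\|b\|^2$. The cross term $-\frac1\v\l v\c u\M,g\r=-\frac1\v\l u,b\r$ combines with $\frac1\v\l b-\v u,u\r$ from the fluid equation, and together they form exactly $\frac1{\v^2}\|b-\v u\|^2$. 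The pressure term vanishes by $\div_x u=0$. For the right-hand sides:
\begin{itemize}
\item In $\frac1\v R_0$, the term $-v\c\n_x g$ is skew and drops out.
\item The term $\frac1\v(\I-\P_0)(v\c\n_xg_1)$ has no $\sqrt M$-component. Writing $\frac1\v\l h,(\I-\P_0)g\r$ and bounding the result by $\frac{\eta}{\v^2}(\|(\I-\P)g\|_\nu^2+\|b\|^2)$ plus $C\|\n_x^2(m_0,u_0)\|^2$-type terms is enough, because $\D_{ma}$ contains them.
\end{itemize}
Here I need to be careful: this source term is \emph{linear} in $(m_0,u_0)$, so the resulting bound has the form $\D_{ma}$ rather than $\E^{1/2}\D$. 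I would therefore absorb it using the large weight $\lambda_4$ on $\E_{ma}$, so that it is dominated by $\lambda_4\D_{ma}$. The remaining terms are handled as follows:
\begin{itemize}
\item The $u_0 g$ and $u m_0$ terms are bilinear, with one factor from $(m_0,u_0)$ and one from $(g,u)$, and are bounded by $\E^{1/2}\D$.
\item The terms in $R_1$ contain $\p_tg_1$, which is again controlled by $\D_{ma}$, and $ug$, which is nonlinear.
\item $R_2$ contains $u_0a$, which is bilinear.
\end{itemize}
Throughout, use $b=(b-\v u)+\v u$ to trade $\frac1\v$ singularities against the dissipation $\frac1{\v^2}\|b-\v u\|^2$.

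\emph{Step 3 (mixed derivatives $\E_{mi,K,2}$).} Apply $\p_x^\a\p_v^{\b+\b'}$ to the equation for $(\I-\P)g$, obtained by applying $\I-\P$ to \eqref{Remainder equations}$_1$. The commutators $[\p_v,v\c\n_x]=\p_x$ and $[\p_v^\b,L]$ generate lower-order terms. These are absorbed by choosing $\bar C_{\a,\b}$ decreasing in $|\b|$, as in the standard Duan-type hierarchy, together with the $\frac1{\v^2}\|\p_x^{\a}(\I-\P)g\|_\nu^2$ dissipation from Step 2. The commutator contributes $\frac1\v\l\p_x^{\a+\b'}\ldots\r$, which is of size $\frac1\v\cdot\frac{1}{\v}$ and is acceptable.

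\emph{Step 4 (macroscopic dissipation $\E_{mi,F}$), which I expect to be the main obstacle.} Take velocity moments of \eqref{Remainder equations}$_1$ against $\M$, $v\M$ and $(v_iv_j-1)\M$. This yields
\[
\p_ta+\tfrac1\v\div_x b=\ldots,\qquad \p_tb+\tfrac1\v\n_xa+\tfrac1\v\n_x\c\langle(\I-\P)g\,v\otimes v\M\rangle+\tfrac1{\v^2}b-\tfrac1\v u=\ldots .
\]
The correction term involving $(\p_ib_j+\p_jb_i)$ paired with $(\I-\P)g(v_iv_j-1)\M$ recovers $\frac1\v\|\n_x b\|^2$. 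The interaction $\v\l\p_x^{\a+\a'}a,\p_x^\a b\r$ recovers $\|\n_xa\|^2$ from the $\frac1\v\n_xa$ term in the $b$-equation. The $\v$ weight makes this exactly order one, while the error $\frac1\v\l \n a,b\r$ is controlled by $\frac{1}{\v^2}\|b\|^2$ together with $\|\n a\|^2$.

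The delicate point is the $\frac1{\v^2}b$ damping and the $\frac1\v u$ coupling. I plan to rewrite them as $\frac1{\v^2}(b-\v u)$, which is small in $\D_{mi,K,1}$, at the cost of choosing $\lambda_1\ll\lambda_2$. Finally, I will check that $\E\sim\|g\|^2_{H^4_xL^2_v}+\ldots$, using smallness of $\v\le1$ and of the $\lambda$ ratios. Then I choose $\lambda_4\gg\lambda_2\gg\lambda_3\gg\lambda_1$ to absorb all linear cross terms, which yields \eqref{total_energy_estimate}.
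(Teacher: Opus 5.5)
Your plan is essentially the paper's proof. It uses the same four differential inequalities: the $(m_0,u_0)$ estimate; the $H^4_x$ estimate built on the $\frac{1}{\v^2}\|b-\v u\|^2$ structure, with the linear $\D_{ma}$ sources absorbed by making the weight on $\E_{ma}$ the largest; the $\p_v$-hierarchy for $(\I-\P)g$; and the moment system with the $(\p_{x_i}b_j+\p_{x_j}b_i)\Gamma_{ij}[(\I-\P)g]$ and $\v\l\p_x^{\a+\a'}a,\p_x^\a b\r_x$ correctors. As in the paper, these are combined through a weighted sum.

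The one deviation is your ordering $\lambda_3\gg\lambda_1$, which is the reverse of the paper's choice $\lambda_3=\tilde\lambda_1\lambda_1\ll\lambda_1$; the paper needs that because its mixed-derivative lemma carries $\D_{mi,F}$ on the right-hand side. Your ordering is still admissible: since $(\I-\P)(v\c\n_x a\M)=0$ and $\|\n_x b\|_{H^3_x}^2\leq 4\v^2\D_{mi,K,1}$, the streaming term $\frac{1}{\v}(\I-\P)(v\c\n_x\P g)$ in your Step 3 never requires $\|\n_x a\|^2$. You should state this explicitly, or simply take $\lambda_3\ll\lambda_1$ as the paper does.
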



We also introduce another type of energy functional $\mathbb{E}(t)$ and dissipation functional $\mathbb{D}(t)$:
\begin{equation}\label{Def_total_functional}
\begin{aligned}
\mathbb{E}(t)= &\ \|g\|_{H^4_{x}L^2_v}^2+\|\n_v(\I-\P)g\|^2_{H^3_{x,v}}+\|(a,b)\|_{H^3_x}+\|u\|^2_{H^4_x}+\|m_0\|_{H^6_x}^2 + \|u_0\|_{H^5_x}^2,\\[6pt]
\mathbb{D}(t)= &\ \frac{1}{\v^2} \left(\sum_{|\a|+|\b|\leq 3}\|\p_x^\a\p_v^{\b+\b'}(\I-\P)g\|_{\nu}^2 + \sum_{|\a|+|\b|\leq 4}\|\p_x^\a\p_v^{\b}(\I-\P)g\|_{\nu}^2+\|b-\v u\|_{H^4_x}^2 \right) + \|\n_x u\|_{H^4_x}^2\\
&+\frac{1}{\v}\|(\n_x b,\up{div}_x b)\|^2_{H^3_x} + \|\n_x a\|^2_{H^3_x}+\|(\n_x m_0,\n_x u_0)\|^2_{H^5_x}+\|(\p_t\n_x m_0,\p_t u_0)\|_{H^4_x}^2
\end{aligned}
\end{equation}
with $|\b'|=1$.

\begin{remark}
By noticing the definitions of $\E(t),\,\D(t)$ in \eqref{energy functionals and energy dissipation functional}, we can directly check the following equivalent relation:
\begin{equation}
\begin{aligned}
\mathbb{E}(t)\sim \E(t),  \quad \mathbb{D}(t)\sim\D(t).
\end{aligned}
\end{equation}
\end{remark}

Based on Proposition \ref{total_energy_dissipation} and equivalent relation, we can directly obtain
\begin{corollary}\label{Instant energy-dissipative rate}
Let $(g,u)$ be the solution to the remainder system \eqref{Remainder equations}, it holds that, for any $\tau\in [0,\infty)$,
\begin{equation}\label{instant energy functional-energy dissipative rate functional}
\frac{1}{2} \mathbb{E}(\tau) + \int_0^\tau\mathbb{D}(s) \,ds \leq C\sup_{0 \leq s\leq \tau} \mathbb{E}^{\frac{1}{2}}(s)\int_0^\tau\mathbb{D}(s) \,ds + C\mathbb{E}(0),
\end{equation}
where the constant $C > 0$ is independent of $\v$ and $\tau$.
\end{corollary}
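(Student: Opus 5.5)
The plan is to integrate the differential inequality \eqref{total_energy_estimate} of Proposition \ref{total_energy_dissipation} in time over $[0,\tau]$ and then convert $\E,\D$ into $\mathbb{E},\mathbb{D}$ using the equivalence stated in the Remark.

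\textbf{Step 1: integrate in time.} Integrating \eqref{total_energy_estimate} from $0$ to $\tau$ gives
\begin{equation*}
\frac12\E(\tau)+\tilde{C}\int_0^\tau\D(s)\,ds\le \frac12\E(0)+C\int_0^\tau \E^{\frac12}(s)\D(s)\,ds .
\end{equation*}
Since $\D\ge 0$, we may pull the energy out as $\sup_{0\le s\le\tau}\E^{\frac12}(s)$, so that
\begin{equation*}
\int_0^\tau \E^{\frac12}\D\,ds\le \sup_{0\le s\le\tau}\E^{\frac12}(s)\int_0^\tau\D\,ds .
\end{equation*}

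\textbf{Step 2: pass to $\mathbb{E},\mathbb{D}$.} By the Remark there are constants $c_1,c_2>0$, independent of $\v$ and $t$, with
\begin{equation*}
c_1\mathbb{E}\le\E\le c_2\mathbb{E},\qquad c_1\mathbb{D}\le\D\le c_2\mathbb{D}.
\end{equation*}
Substituting these bounds yields
\begin{equation*}
\frac{c_1}{2}\mathbb{E}(\tau)+\tilde{C}c_1\int_0^\tau\mathbb{D}\,ds\le \frac{c_2}{2}\mathbb{E}(0)+Cc_2^{\frac32}\sup_{0\le s\le\tau}\mathbb{E}^{\frac12}(s)\int_0^\tau\mathbb{D}\,ds .
\end{equation*}
Dividing by $\min(c_1,2\tilde{C}c_1)$ and enlarging the constant $C$ gives exactly \eqref{instant energy-dissipative rate functional}. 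The equivalence constants depend only on the $\lambda_i$, the $\bar{C}_{\a,\b}$ and Cauchy--Schwarz bounds on the cross terms in $\E_{mi,F}$, so none of them depends on $\v$ or $\tau$.

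\textbf{Main obstacle.} The corollary itself is routine; all the substance sits in Proposition \ref{total_energy_dissipation} and the Remark, which we take as given. The only delicate point is the lower bound $\E\gtrsim\mathbb{E}$. It requires the mixed term
\begin{equation*}
2\sum_{|\a|=0}^3\sum_{i,j=1}^3\l\p_x^{\a}(\p_{x_i} b_j+\p_{x_j} b_i),\p_x^\a(\I-\P)g(v_iv_j-1)\M\r_{x,v}
\end{equation*}
and the term $\v\l\p_x^{\a+\a'}a,\p_x^\a b\r_x$ in $\E_{mi,F}$ to be absorbed. This works because $\v\le1$ and the weight $\lambda_1$ is small relative to $\lambda_2$, which controls $\|g\|^2_{H^4_xL^2_v}\ge\|(a,b)\|^2_{H^4_x}+\|(\I-\P)g\|^2_{H^4_xL^2_v}$. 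We rely on this choice having been made in \eqref{lambda constants}.

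A minor mismatch is that $\mathbb{E}$ contains $\|(a,b)\|_{H^3_x}$ without a square. This is presumably a typo for the squared norm. In any case the term is dominated by $\|g\|^2_{H^4_xL^2_v}$, so the argument is unaffected.
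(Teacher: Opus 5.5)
Your proposal is correct and follows the paper's argument. The paper integrates the inequality of Proposition~\ref{total_energy_dissipation} over $[0,\tau]$, pulls out $\sup_{0\le s\le\tau}\E^{\frac12}(s)$, and passes to $\mathbb{E},\mathbb{D}$ via the equivalences of the Remark. Your observation that $\mathbb{E}\lesssim\E$ needs $\lambda_1/\lambda_2=\tilde\lambda_2$ small enough to absorb the cross terms of $\E_{mi,F}$ into $\lambda_2\|g\|^2_{H^4_xL^2_v}$ is a valid detail the paper leaves implicit; shrinking $\tilde\lambda_2$ does not spoil \eqref{Third time}.

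There are two minor slips. To keep coefficient $1$ on $\int_0^\tau\mathbb{D}\,ds$ you should divide by $\min(c_1,\tilde{C}c_1)$, not $\min(c_1,2\tilde{C}c_1)$. Also, a genuinely unsquared $\|(a,b)\|_{H^3_x}$ would not be dominated by the quadratic $\|g\|^2_{H^4_xL^2_v}$ for small data, so reading it as a typo is necessary rather than optional.
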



\subsection{\textit{A priori} estimates for the NS-KS system \eqref{NS-KS-1}}
\label{subsec:apriori_NS-KS}

In this section, we present \textit{a priori} estimates for the NS-KS system \eqref{NS-KS-1}. In fact, if we hope to have a good control of $\p_t g_1$ in \eqref{L-R0-R3}, we have to estimate $\p_t u_0$ and $\p_t \n_x m_0$, which is the reason why the estimates on $(m_0, u_0)$ are necessary.

To this end, we present the following two lemmas of estimates on $(m_0, u_0)$:

\begin{lemma}\label{Hs-estimate}
Let $(m_0,u_0)$ be the global-in-time solution to the NS-KS system \eqref{NS-KS-1}, then we have
\begin{equation}\label{Estimate-m-u}
\begin{aligned}
\frac{1}{2}\frac{d}{dt}\|
(m_0,u_0)\|^2_{H^6_x}+\frac{3}{4}\|(\n_x m_0, \n_x u_0)\|^2_{H^6_x}\leq 0
\end{aligned}
\end{equation}
for any $t \geq 0$.
\end{lemma}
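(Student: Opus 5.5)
The plan is a standard $H^6$ energy estimate for the decoupled system: apply $\p_x^\a$ with $|\a|\le 6$ to \eqref{NS-KS-1}, pair with $\p_x^\a m_0$ and $\p_x^\a u_0$ in $L^2_x$, sum over $\a$, and absorb all nonlinear terms into the dissipation using the smallness \eqref{H6} from Proposition \ref{Solu-NS-KS}.

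For the velocity equation, the pressure term vanishes because $\up{div}_x u_0=0$: $\l \p_x^\a\n_x p_0,\p_x^\a u_0\r_x=0$. The top-order convection term also vanishes, since $\l u_0\c\n_x\p_x^\a u_0,\p_x^\a u_0\r_x=0$. The remaining commutator terms $\l \p_x^{\a-\a_1}u_0\c\n_x\p_x^{\a_1}u_0,\p_x^\a u_0\r_x$, with $|\a_1|<|\a|$, are bounded by $C\|u_0\|_{H^6_x}\|\n_x u_0\|_{H^6_x}^2$. For this I would use Moser/Kato–Ponce calculus inequalities and the embedding $H^2\hookrightarrow L^\infty$. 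The key point is that every factor carries at least one derivative except possibly one, so the bound holds up to a lower-order $L^2$ piece. For $|\a|=0$ the convection term is exactly zero.

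For the $m_0$ equation I would write $\up{div}_x(u_0m_0)=u_0\c\n_x m_0$. The top-order term again cancels by the divergence-free condition, and the commutators are bounded by $C\|(m_0,u_0)\|_{H^6_x}\|(\n_x m_0,\n_x u_0)\|_{H^6_x}^2$. The diffusion terms give exactly $\|\n_x\p_x^\a(m_0,u_0)\|^2_{L^2_x}$. Summing over $\a$ yields
\begin{equation*}
\frac12\frac{d}{dt}\|(m_0,u_0)\|_{H^6_x}^2+\|(\n_x m_0,\n_x u_0)\|_{H^6_x}^2\le C\|(m_0,u_0)\|_{H^6_x}\|(\n_x m_0,\n_x u_0)\|_{H^6_x}^2.
\end{equation*}
By \eqref{H6}, $C\|(m_0,u_0)\|_{H^6_x}\le C\delta^{1/2}\le \frac14$ once $\delta$ is small, which gives \eqref{Estimate-m-u}.

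The main obstacle is the top-order commutator when $|\a|=6$, specifically the term where all six derivatives fall on $u_0$ in $\p_x^\a(u_0\c\n_x m_0)$. This produces $\l \p_x^\a u_0\c\n_x m_0,\p_x^\a m_0\r_x$, which contains $\p_x^6 u_0$ and would require $m_0$ with seven derivatives if handled naively. To avoid this, I would estimate it as $\|\p_x^\a u_0\|_{L^2}\|\n_x m_0\|_{L^\infty}\|\p_x^\a m_0\|_{L^2}$. Then $\|\p_x^\a u_0\|_{L^2}\le\|\n_x u_0\|_{H^5}$ and $\|\n_x m_0\|_{L^\infty}\lesssim\|\n_x m_0\|_{H^2}$, while $\|\p_x^\a m_0\|_{L^2}\le\|m_0\|_{H^6}$ supplies the small factor. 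The result is again of the form (small energy)$\times$(dissipation)$^2$.

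To be fully rigorous with the regularity in Proposition \ref{Solu-NS-KS}, where $\p_t$ is only controlled in $H^4_x$, I would first derive the estimate for mollified data (or a Galerkin approximation) and then pass to the limit.
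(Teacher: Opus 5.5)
Your proposal is correct and follows essentially the same route as the paper. You apply $\p_x^\a$ for $|\a|\le 6$, test with $\p_x^\a m_0$ and $\p_x^\a u_0$, bound the nonlinear terms by $C\|(m_0,u_0)\|_{H^6_x}\|(\n_x m_0,\n_x u_0)\|_{H^6_x}^2$, and absorb them using \eqref{H6} with $C\sqrt{\delta}\le\frac14$. The paper does the same, except that it bounds every term of $\p_x^\a(u_0\c\n_x m_0)$ and $\p_x^\a(u_0\c\n_x u_0)$, including the top-order one, by a single H\"older split $L^3_x\times L^2_x\times L^6_x$ together with $\|f\|_{L^6_x}\lesssim\|\n_x f\|_{L^2_x}$. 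So your divergence-free cancellation and your separate handling of $\p_x^\a u_0\c\n_x m_0$ are valid but not needed.
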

\begin{proof}
By applying the derivative operator $\p_x^{\a}$ to $\eqref{NS-KS-1}$ for $0 \leq |\a| \leq 6$, taking inner product with $\p_x^{\a}u_0$, $\p_x^{\a}m_0$, and integrating by parts, respectively, we have, for $|\a'| = 1$,
\begin{equation}
\begin{aligned}
&\frac{1}{2}\frac{d}{dt}\|(\p_x^\a m_0, \p_x^\a u_0)\|_{L^2_x}^2 + \|(\p_x^{\a+\a'}m_0, \p_x^{\a+\a'}u_0)\|_{L^2_x}^2\\[4pt]
= \ &-\l\p_x^\a(u_0\c\n_x m_0),\p_x^\a m_0\r_x-\l\p_x^\a(u_0\c\n_x u_0),\p_x^\a u_0\r_x\\[4pt]
\leq\  & \sum_{0\leq |\tilde{\a}| \leq |\a|} \binom{\a}{\tilde{\a}}{}\Big(\|\p_x^{\tilde{\a}}u_0\|_{L^3_x}\|\p_x^{\a'+\a-\tilde{\a}}m_0\|_{L^2_x}\|\p_x^\a u_0\|_{L^6_x}+\|\p_x^{\tilde{\a}}u_0\|_{L^3_x}\|\p_x^{\a'+\a-\tilde{\a}}u_0\|_{L^2_x}\|\p_x^\a u_0\|_{L^6_x}\Big) \\[4pt]
\leq \ &  C\|(m_0, u_0)\|_{H^6_x}\|(\n_x m_0, \n_x u_0)\|_{H^6_x}^2\\[4pt]
\leq \ & C\sqrt{\delta}\|(\n_x m_0, \n_x u_0)\|_{H^6_x}^2,
\end{aligned}
\end{equation}
where we utilize H$\ddot{\up{o}}$lder inequality and Young inequality in the first inequality, the Sobolev embedding in the second inequality and \eqref{H6} in the last inequality.

The proof can be completed by summing up $0 \leq |\a| \leq 6$ and choosing $\delta>0$ small enough such that $C\sqrt{\delta}\leq\frac{1}{4}$.
\end{proof}

\begin{lemma}\label{partial-m0-u0}
Let $(m_0,u_0)$ be the global-in-time solution to the NS-KS system \eqref{NS-KS-1}, then for any $0\leq t\,<\infty$,
\begin{equation}\label{Hs-m0}
\begin{aligned}
\frac{1}{2}\frac{d}{dt} \|\p_x^{\a+\a''}m_0\|^2_{L^2_x} + \frac{3}{4} \|\p_t\p_x^{\a+\a'}m_0\|^2_{L^2_x} \leq \frac{1}{4} \|(\n_x m_0,\n_x u_0)\|^2_{H^{|\a|+1}_x},
\end{aligned}
\end{equation}
and
\begin{equation}\label{Hs-u0}
\begin{aligned}
&\frac{1}{2}\frac{d}{dt} \|\p_x^{\a+\a'}u_0\|^2_{L^2_x} + \frac{3}{4}\|\p_t\p_x^\a u_0\|^2_{L^2_x} \leq \frac{1}{4} \|\n_x u_0\|_{H^{|\a|}_x}^2,
\end{aligned}
\end{equation}
for any $0\leq|\a|\leq 4$, $|\a'|=1$, and $|\a''|=2$.
\end{lemma}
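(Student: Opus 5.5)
Both estimates come from testing the differentiated equations against time derivatives instead of the unknowns themselves.

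\emph{Estimate \eqref{Hs-m0}.} I would apply $\p_x^{\a+\a'}$ to $\eqref{NS-KS-1}_1$ and take the $L^2_x$ inner product with $\p_t\p_x^{\a+\a'}m_0$. The left side gives $\|\p_t\p_x^{\a+\a'}m_0\|^2_{L^2_x}$. The diffusion term, after one integration by parts in $x$, gives
\[
-\l\Delta_x\p_x^{\a+\a'}m_0,\p_t\p_x^{\a+\a'}m_0\r_x=\tfrac12\tfrac{d}{dt}\|\n_x\p_x^{\a+\a'}m_0\|^2_{L^2_x}.
\]
Summing over $|\a'|=1$, this is the same as $\frac12\frac{d}{dt}\sum_{|\a''|=2}\|\p_x^{\a+\a''}m_0\|^2_{L^2_x}$ up to the obvious combinatorial identification, which is how I read the statement. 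The only remaining term is the transport term. Using $\up{div}_x u_0=0$, it is $\l\p_x^{\a+\a'}(u_0\c\n_x m_0),\p_t\p_x^{\a+\a'}m_0\r_x$, and Young's inequality bounds it by
\[
\tfrac14\|\p_t\p_x^{\a+\a'}m_0\|^2_{L^2_x}+\|\p_x^{\a+\a'}(u_0\c\n_x m_0)\|^2_{L^2_x}.
\]

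The key step is the commutator-type product bound
\[
\|\p_x^{\a+\a'}(u_0\c\n_x m_0)\|_{L^2_x}\lesssim\|(m_0,u_0)\|_{H^6_x}\,\|(\n_x m_0,\n_x u_0)\|_{H^{|\a|+1}_x},
\]
valid for $|\a|\le4$. I would prove it by Leibniz expansion. Each term $\p^{\gamma}u_0\,\p^{\a+\a'-\gamma}\n_x m_0$ has total derivative count $|\a|+2\le6$. Put the factor carrying fewer derivatives in $L^\infty_x$, controlled via $H^2_x\hookrightarrow L^\infty_x$ by $\|(m_0,u_0)\|_{H^6_x}$, and the other factor in $L^2_x$. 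In every term at least one derivative lands on each factor: on $m_0$ through $\n_x$, and on $u_0$ whenever $\gamma\neq0$. For $\gamma=0$, the factor $u_0$ goes into $L^\infty_x$ and the $L^2_x$ factor is $\p^{\a+\a'}\n_x m_0$, which is in $\|\n_x m_0\|_{H^{|\a|+1}_x}$. The $L^2$ factor therefore always lies in $\|(\n_x m_0,\n_x u_0)\|_{H^{|\a|+1}_x}$.

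Squaring and using \eqref{H6} bounds this term by $C\delta\|(\n_x m_0,\n_x u_0)\|^2_{H^{|\a|+1}_x}$. Choosing $\delta$ small enough that $C\delta\le\frac14$ then yields \eqref{Hs-m0}.

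\emph{Estimate \eqref{Hs-u0}.} I would apply $\p_x^\a$ to $\eqref{NS-KS-1}_2$ and test with $\p_t\p_x^\a u_0$. The pressure term vanishes because $\up{div}_x\p_t\p_x^\a u_0=0$. The diffusion term gives $\frac12\frac{d}{dt}\|\n_x\p_x^\a u_0\|^2_{L^2_x}$. The convection term is handled by Young's inequality together with
\[
\|\p_x^\a(u_0\c\n_x u_0)\|_{L^2_x}\lesssim\|u_0\|_{H^6_x}\|\n_x u_0\|_{H^{|\a|}_x},
\]
proved by the same Leibniz and $L^\infty$--$L^2$ splitting, with smallness again from \eqref{H6}.

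\emph{Main obstacle.} The only delicate point is the derivative bookkeeping in the product estimates. One must make sure the $L^2$ factor never carries more than $|\a|+2$ derivatives of $m_0$ (respectively $|\a|+1$ of $u_0$), and always carries at least one, so that only dissipation norms appear on the right-hand side. The restriction $|\a|\le4$ keeps every $L^\infty$ factor within $H^6_x$.
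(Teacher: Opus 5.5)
Your proposal is correct and follows essentially the paper's argument. You test the differentiated KS and NS equations against $\partial_t\partial_x^{\alpha+\alpha'}m_0$ and $\partial_t\partial_x^{\alpha}u_0$, remove the pressure by incompressibility, and absorb the transport term through a Leibniz/Sobolev product bound combined with the smallness in \eqref{H6}. The only cosmetic differences are that you use $\mathrm{div}_x u_0=0$ to drop the $m_0\,\mathrm{div}_x u_0$ piece, and you apply Young's inequality before squaring the product bound, so your smallness condition is $C\delta\le\frac14$ instead of the paper's $C\sqrt{\delta}\le\frac14$.
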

\begin{proof}

By applying the derivative operator $\p_x^{\a+\a'}$ to $\eqref{NS-KS-1}_1$ for $0\leq|\a|\leq 4$ and $|\a'|=1$, taking inner product with $\p_t\p_x^{\a+\a'}m_0$, and integrating by parts, we have, for $|\a''|=2$,
\begin{equation}
\begin{aligned}
&\frac{1}{2}\frac{d}{dt} \|\p_x^{\a+\a''}m_0\|^2_{L^2_x} +\|\p_t\p_x^{\a+\a'}m_0\|^2_{L^2_x} \\[4pt]
= &\ -\l\p_x^{\a+\a'}\up{div}_x(u_0m_0),\p_t\p_x^{\a+\a'}m_0\r_x\\[4pt]
\leq &\ \|m_0\|_{L^\infty_x} \|\p_x^{\a+\a'}\up{div}_xu_0\|_{L^2_x} \|\p_t\p_x^{\a+\a'}m_0\|_{L^2_x} +\|u_0\|_{L^\infty_x}\|\p_x^{\a+\a''}m_0\|_{L^2_x} \|\p_t\p_x^{\a+\a'}m_0\|_{L^2_x} \\[4pt]
&\ + C\sum_{0\leq|\tilde{\a}| \leq |\a|} \|\p_x^{\tilde{\a}}\up{div}_x u_0\|_{L^4_x} \|\p_x^{\a+\a'-\tilde{\a}}m_0\|_{L^4_x} \|\p_t\p_x^{\a+\a'}m_0\|_{L^2_x} \\
&\ +C\sum_{1 \leq |\tilde{\a}| \leq |\a|+|\a'|}\|\p_x^{\tilde{\a}} u_0\|_{L^4_x}\|\p_x^{\a+\a''-\tilde{\a}}m_0\|_{L^4_x}\|\p_t\p_x^{\a+\a'}m_0\|_{L^2_x} \\
\leq &\ C\|(m_0,u_0)\|_{H^5_x} \Big(\|\p_t\p_x^{\a+\a'}m_0\|^2_{L^2_x} + \|(\n_x u_0,\n_x m_0)\|^2_{H^{|\a|}_x}\Big)\\
\leq&\ \frac{1}{4} \|\p_t\p_x^{\a+\a'}m_0\|^2_{L^2_x} + \frac{1}{4}\|(\n_x m_0,\n_x u_0)\|^2_{H^{|\a|+1}_x},
\end{aligned}
\end{equation}
where we notice \eqref{H6} and choose $\delta$ small enough such that and $C\sqrt{\delta}\leq \frac{1}{4}$.

Similarly, after applying $\p_x^\a$ to $\eqref{NS-KS-1}_2$ for  $0\leq|\a|\leq 4$, taking the inner product with $\p_t\p_x^\a u_0$, and integrating by parts, we obtain, for $|\a'|=1$,
\begin{equation*}
\begin{aligned}
\frac{1}{2}\frac{d}{dt}\|\p_x^{\a+\a'}u_0\|^2_{L^2_x} +\|\p_t\p_x^\a u_0\|^2_{L^2_x} = &\ -\l\p_x^\a(u_0 \cdot \n_x u_0),\p_t\p_x^\a u_0\r_x\\
\leq& \ \frac{1}{4}\|\p_t\p_x^\a u_0\|^2_{L^2_x} + \frac{1}{4}\|\n_x u_0\|^2_{H^{|\a|}_x}.
\end{aligned}
\end{equation*}

\end{proof}



By using the Lemma \ref{Hs-estimate}, we have the following estimate of the macroscopic energy $\E_{ma}(t)$ and dissipation $\D_{ma}(t)$:

\begin{lemma}\label{Estimate of macro}
Let $(m_0,u_0)$ be the global-in-time solution to the NS-KS system \eqref{NS-KS-1}, we have
\begin{equation}\label{Macro-part}
\frac{1}{2} \frac{d}{dt} \E_{ma}(t) + \frac{1}{2} \D_{ma}(t) \leq 0,
\end{equation}
for any $t \geq 0$.
\end{lemma}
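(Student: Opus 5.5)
The plan is to obtain \eqref{Macro-part} by combining an $L^2$-type energy identity at the orders appearing in $\E_{ma}(t)$ with control of the time-derivative part of $\D_{ma}(t)$. The NS equation $\eqref{NS-KS-1}_2$--$\eqref{NS-KS-1}_3$ does not involve $m_0$, so the $u_0$-estimate can be done on its own at order $5$. The $m_0$-estimate is then done at order $6$ and needs $u_0$ only through $\|\n_x u_0\|_{H^5_x}$.

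\textbf{Step 1 (spatial energy).} Repeat the argument of Lemma \ref{Hs-estimate}. For $u_0$, apply $\p_x^\a$ with $|\a|\le 5$ and use $\up{div}_x u_0=0$ to kill the pressure and the top-order transport term. This gives
$$\frac12\frac{d}{dt}\|u_0\|_{H^5_x}^2+\|\n_x u_0\|_{H^5_x}^2\le C\sqrt{\delta}\,\|\n_x u_0\|_{H^5_x}^2 .$$
For $m_0$, take $|\a|\le 6$. The top-order transport term $\l u_0\c\n_x\p_x^\a m_0,\p_x^\a m_0\r_x$ vanishes. The commutator terms contain at most $\p_x^{\a}u_0$ with $|\a|\le 6$, which is bounded by $\|\n_x u_0\|_{H^5_x}$. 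Using H\"older, Sobolev embedding and \eqref{H6}, this gives
$$\frac12\frac{d}{dt}\|m_0\|_{H^6_x}^2+\|\n_x m_0\|_{H^6_x}^2\le C\sqrt{\delta}\big(\|\n_x m_0\|_{H^6_x}^2+\|\n_x u_0\|_{H^5_x}^2\big).$$

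\textbf{Step 2 (time derivatives).} I will bound the $\p_t$-terms of $\D_{ma}$ directly from the equations rather than through new energy identities, since their contribution to $\frac{d}{dt}$ should not appear in $\E_{ma}$. From $\p_t u_0=\mathbb{P}(\Delta_x u_0-u_0\c\n_x u_0)$ and the boundedness of the Leray projection on $H^4_x$ we get
$$\|\p_t u_0\|_{H^4_x}\le\|\Delta_x u_0\|_{H^4_x}+C\sqrt{\delta}\|\n_x u_0\|_{H^4_x},$$
using $H^2\hookrightarrow L^\infty$ and \eqref{H6}. Similarly, from $\p_t\n_x m_0=\n_x(\Delta_x m_0-u_0\c\n_x m_0)$,
$$\|\p_t\n_x m_0\|_{H^4_x}\le\|\n_x\Delta_x m_0\|_{H^4_x}+C\sqrt\delta\,\|(\n_x m_0,\n_x u_0)\|_{H^5_x}.$$
Alternatively, Lemma \ref{partial-m0-u0} can be added with a small weight. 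Its right-hand side $\frac14\|(\n_x m_0,\n_x u_0)\|^2_{H^{|\a|+1}_x}$ is absorbed by the Step 1 dissipation. Its time-derivative terms $\|\p_x^{\a+\a''}m_0\|^2$ and $\|\p_x^{\a+\a'}u_0\|^2$ are pieces of $\|m_0\|^2_{H^6_x}$ and $\|u_0\|^2_{H^5_x}$, so the resulting functional is equivalent to $\E_{ma}$.

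\textbf{Step 3 (closing) and main obstacle.} Add the two inequalities of Step 1, choose $\delta$ so small that $C\sqrt\delta$ is tiny, and insert the bounds of Step 2 to show that the total dissipation dominates $\frac12\D_{ma}(t)$. The main obstacle is this constant bookkeeping. The Step 1 dissipation is exactly $\|\n_x m_0\|^2_{H^6_x}+\|\n_x u_0\|^2_{H^5_x}$ with coefficient $1-O(\sqrt\delta)$. The first half of $\D_{ma}$, namely $\|(\n_x m_0,\n_x u_0)\|_{H^5_x}^2$, already uses up part of it. The bounds $\|\Delta_x u_0\|_{H^4_x}^2$ and $\|\n_x\Delta_x m_0\|_{H^4_x}^2$ must then be compared with $\|\n_x u_0\|^2_{H^5_x}$ and $\|\n_x m_0\|^2_{H^6_x}$, and the constants in that comparison depend on how the $H^k_x$ norms are normalized.

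I would first check this via Plancherel with the norm $\sum_{|\gamma|\le k}\|\p_x^\gamma\c\|^2$. If the constant $\frac12$ is not reached directly, I would use the weighted combination with Lemma \ref{partial-m0-u0}, giving small weight $\eta$ to the $\p_t$-identities. That yields $\frac{d}{dt}\tilde{\E}_{ma}+c\,\D_{ma}\le0$ with $\tilde{\E}_{ma}\sim\E_{ma}$, which suffices for all later uses in \eqref{energy functionals and energy dissipation functional} through the weight $\lambda_4$.
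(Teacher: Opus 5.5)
Your argument is correct for the form of the lemma that is actually true and used later. Your main route also differs from the paper's.

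\textbf{The constant $\tfrac12$.} You flagged this as the obstacle, and it cannot be reached, so your hedged conclusion is the right one. Take $u_0^{in}=0$, so that $u_0\equiv 0$ and $\p_t m_0=\Delta_x m_0$. Then
\[
\frac12\frac{d}{dt}\E_{ma}+\frac12\D_{ma}=-\|\n_x m_0\|^2_{H^6_x}+\frac12\|\n_x m_0\|^2_{H^5_x}+\frac12\|\n_x\Delta_x m_0\|^2_{H^4_x}.
\]
Use $\|f\|^2_{H^k_x}=\sum_{|\gamma|\le k}\|\p_x^\gamma f\|^2_{L^2_x}$ and set $S_k(\xi)=\sum_{|\gamma|\le k}\xi^{2\gamma}$. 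The Fourier multiplier of the right-hand side is $|\xi|^2\big(-S_6+\frac12 S_5+\frac12|\xi|^4S_4\big)$. At $\xi=(s,s,s)$ one has $S_k=\binom{k+2}{2}s^{2k}+O(s^{2k-2})$ and $|\xi|^4=9s^4$. So the bracket equals $(-28+\frac{135}{2})s^{12}+O(s^{10})$, which is positive for large $s$. Small data with Fourier support near $\pm(s,s,s)$ therefore violate \eqref{Macro-part} at the initial time.

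The paper's one-line proof does not give $\frac12$ either. Summing \eqref{Hs-m0}--\eqref{Hs-u0} over the $35$ indices $|\a|\le 4$ needs a small weight before \eqref{Estimate-m-u} can absorb it. Also, \eqref{Estimate-m-u} carries $\|u_0\|^2_{H^6_x}$, not $\|u_0\|^2_{H^5_x}$.

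What Step 3 of the proof of Proposition \ref{total_energy_dissipation} actually needs is $\frac12\frac{d}{dt}\E_{ma}+c\,\D_{ma}\le 0$ for some $c>0$ independent of $\v$; one then takes $C_5\tilde\lambda_3\le c/2$. Your Steps 1--2 give exactly this, with $c=(1-C\sqrt\delta)/K$, where $K$ is the constant in $\D_{ma}\le K\big(\|\n_x m_0\|^2_{H^6_x}+\|\n_x u_0\|^2_{H^5_x}\big)$ from Step 2.

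\textbf{Comparison of routes.} The paper gets the $\p_t$-part of $\D_{ma}$ from a second family of energy identities, Lemma \ref{partial-m0-u0}. These test the differentiated equations against $\p_t\p_x^{\a+\a'}m_0$ and $\p_t\p_x^\a u_0$, and are added to the $H^6_x$ estimate of Lemma \ref{Hs-estimate}. This is your fallback, and it only controls a functional equivalent to $\E_{ma}$.

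Your main route does a single energy estimate at exactly the orders in $\E_{ma}$: order $6$ for $m_0$ and order $5$ for $u_0$. This works because the Navier--Stokes part is decoupled, and the order-$6$ estimate for $m_0$ only sees $\|\n_x u_0\|_{H^5_x}$. You then read $\p_t u_0$ and $\p_t\n_x m_0$ off the equations, using the parabolic scaling $\p_t\sim\Delta_x$. This gives the inequality for $\E_{ma}$ itself, needs no small weights, and makes Lemma \ref{partial-m0-u0} unnecessary. The paper's route has one minor advantage: the pressure drops out when testing against the divergence-free $\p_t\p_x^\a u_0$, so no Leray-projector bound is needed. That bound is trivial in Fourier anyway.
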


\begin{proof}
The estimate \eqref{Macro-part} is obtained by combining the inequalities \eqref{Estimate-m-u}, \eqref{Hs-m0}, \eqref{Hs-u0}, and Lemma \ref{Hs-estimate}.
\end{proof}


\subsection{\textit{A priori} estimates for the kinetic part of remainder system \eqref{Remainder equations}}
\label{subsec:remainder_kinetic}

The coercivity property of $L$ in the remainder system  \eqref{Remainder equations} produces the kinetic energy dissipation of $g$.

\begin{lemma}\label{Estimate-kinetic-Mi-1}
Let $(g,u)$ be the solution to the remainder system \eqref{Remainder equations}, then there exist constants $C_1,\,C_{1,1} > 0$ independent of $\v$ such that, for $t \geq 0$,
\begin{equation}\label{Step one}
\begin{aligned}
\frac{1}{2}\frac{d}{dt}\E_{mi,K,1}(t)+C_{1,1}\D_{mi,K,1}(t) \leq C_1 \big(\D_{ma}(t)+\E^{\frac{1}{2}}(t)\D(t)\big)
\end{aligned}
\end{equation}
where $\E_{mi,K,1}$, $\D_{mi,K,1}(t)$, $\E(t)$, and $\D(t)$ are defined in \eqref{part of energy functionals} and \eqref{energy functionals and energy dissipation functional}, respectively.
\end{lemma}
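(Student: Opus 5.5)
The plan is to run a pure $x$-derivative $L^2$ energy estimate on the remainder system \eqref{Remainder equations}. For $0\le|\a|\le4$, apply $\p_x^\a$ to $\eqref{Remainder equations}_1$, take the $L^2_{x,v}$ inner product with $\p_x^\a g$, and sum. Do the same with $\p_x^\a u$ for $\eqref{Remainder equations}_2$ in $L^2_x$. The pressure term drops out because $\up{div}_x u=0$, and $\l u_0\c\n_x\p_x^\a u,\p_x^\a u\r_x=0$.

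The key structural point is the exact cancellation of the singular coupling terms. In the kinetic equation we have
$$-\frac1\v\l \p_x^\a(v\c u)\M,\p_x^\a g\r_{x,v}=-\frac1\v\l\p_x^\a u,\p_x^\a b\r_x.$$
In the fluid equation we have $-\frac1\v\l\p_x^\a(b-\v u),\p_x^\a u\r_x$. On the kinetic side, \eqref{Poin inequality} together with \eqref{Decop-L} gives
$$\frac1{\v^2}\l L\p_x^\a g,\p_x^\a g\r\ge \frac{C_0}{\v^2}\|\p_x^\a(\I-\P)g\|_\nu^2+\frac1{\v^2}\|\p_x^\a b\|^2 .$$
Adding the two identities, the linear terms combine into
$$\frac1{\v^2}\|\p_x^\a b\|^2-\frac2\v\l \p_x^\a b,\p_x^\a u\r+\|\p_x^\a u\|^2=\frac1{\v^2}\|\p_x^\a(b-\v u)\|^2 .$$
Together with $\|\n_x\p_x^\a u\|^2$ from the Laplacian, this produces exactly $\D_{mi,K,1}$. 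I would check the signs carefully here, since this cancellation is the reason the new expansion \eqref{new_expansion} was chosen.

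Next I bound the right-hand sides. For the source terms $\frac1\v R_0$ and $R_1$, the first step is to note that $(\I-\P_0)(v\c\n_x g_1)$ and $\p_t g_1$ involve only $(m_0,u_0)$, $\n_x$ and $\p_t$ of them, times polynomials in $v$ times $\M$.

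For $R_0$, its linear parts are orthogonal to $\P_0$. It is not orthogonal to $\P_1$, however, so I expand the pairing via $\p_x^\a g=\P g+(\I-\P)g$. The pairing with $(\I-\P)g$ is bounded by $\frac1\v\|\cdot\|\,\|\p_x^\a(\I-\P)g\|_\nu$. Using Cauchy--Schwarz with $\frac{\eta}{\v^2}\|\p_x^\a(\I-\P)g\|_\nu^2+C\|\n_x(m_0,u_0)\|^2$-type terms, this is absorbed by the dissipation plus $C_1\D_{ma}$ (plus nonlinear products bounded by $\E^{1/2}\D$).

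The delicate terms are the pairings of the $g_1$-source with $\P_1 g=v\c b\M$, which are of order $\frac1\v\|\n_x\p_x^\a u_0\|\|\p_x^\a b\|$. I handle them by writing $b=(b-\v u)+\v u$: the first piece is absorbed by $\frac1{\v^2}\|b-\v u\|^2$, and the second gives $\|\n u_0\|\|u\|$. For that product, I note that $\p_x^\a u$ with $|\a|\ge1$ is dissipated. For $|\a|=0$, I instead integrate by parts on the pairing $\l\n_x m_0\ldots\r$ to move the derivative, or write it as $\D_{ma}^{1/2}\|\n u\|$.

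The transport term $-\frac1\v v\c\n_x g$ vanishes in the $L^2$ pairing by antisymmetry. The terms $\up{div}_v(u_0 g\M)/\M$ and $\up{div}_v(u g\M)/\M$ I integrate by parts in $v$. This leaves $\frac1\v\l u_0\c(\n_v-\frac v2)\p_x^\a g,\ldots\r$, and only the $\P$-to-$(\I-\P)$ and $(\I-\P)$ parts survive. Since $\l(\n_v-\frac v2)\P g,\P g\r$ reduces to $u_0\c b\,a$-type terms, these are treated with smallness of $\|u_0\|_{H^5}\lesssim\E^{1/2}$ as $\E^{1/2}\D$. Here the $\frac1\v\|\n b\|^2$ and $\|\n a\|^2$ in $\D_{mi,F}$ supply control for the derivative-on-macroscopic pieces; only gradients of $a$ are available, so I would distribute derivatives so that at least one falls on $a$ or on the coefficient, using Sobolev embedding in $\R^3$.

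The fluid source terms are handled the same way. For $R_2$ and $u\c\n_x u_0$, I pair against $\p_x^\a u$ and use $\|u\|_{L^6}\lesssim\|\n u\|$ and $\|a\|_{L^6}\lesssim\|\n a\|$, which bounds them by $\E^{1/2}\D$.

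The main obstacle is the low-order ($|\a|=0$) macroscopic interactions, where neither $a$ nor $u$ itself lies in $\D$. My plan there is to always place one gradient, via $L^3$--$L^6$ Hölder and Sobolev, onto $a$ or $u$, and to take the $L^3$ norm on the coefficient, which is small in $\E$. Finally, I sum over $|\a|\le4$ and choose the Young constants small to obtain \eqref{Step one}.
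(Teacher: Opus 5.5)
Your skeleton matches the paper's: $\p_x^\a$-energy estimates for $|\a|\le4$, the exact recombination $\frac1{\v^2}\|\p_x^\a b\|^2_{L^2_x}-\frac2\v\l\p_x^\a b,\p_x^\a u\r_x+\|\p_x^\a u\|^2_{L^2_x}=\frac1{\v^2}\|\p_x^\a(b-\v u)\|^2_{L^2_x}$, Young's inequality against the dissipation, and the splitting $b=(b-\v u)+\v u$. Two points need correcting, and the second is a genuine gap.

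\textbf{The macroscopic pairings are misplaced.} The $O(\v^{-1})$ sources built from $g_1=(v\c w)\M$, with $w=(1+m_0)u_0-\n_x m_0$, are purely microscopic. Indeed, $v\c\n_x g_1$ is even in $v$, so $(\I-\P_0)(v\c\n_x g_1)\perp\P$. Likewise, $\M^{-1}\up{div}_v(u_0g_1\M)=[u_0\c w-(v\c u_0)(v\c w)]\M$ has zero $\P$-part. So the term $\frac1\v\|\n_x\p_x^\a u_0\|\,\|\p_x^\a b\|$ you plan for does not occur; the paper uses exactly this in $B_{111}$ and $B_{112}$.

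The $g_1$-term that actually meets $\P_1 g$ is the $O(1)$ term $-\l\p_x^\a\p_t g_1,\p_x^\a g\r_{x,v}=-\l\p_x^\a\p_t w,\p_x^\a b\r_x$ coming from $R_1$. This is why $\D_{ma}$ contains $\|(\p_t\n_x m_0,\p_t u_0)\|^2_{H^4_x}$. After splitting $b$, the leftover at $|\a|=0$ is $\v\l\p_t w,u\r_x$. Handle it as follows:
\begin{itemize}
\item $\l\n_x\p_t m_0,u\r_x=0$, because $\up{div}_x u=0$;
\item insert the Navier--Stokes equation: $\l\p_t u_0,u\r_x=-\l\n_x u_0,\n_x u\r_x-\l u_0\c\n_x u_0,u\r_x$.
\end{itemize}
Do not bound it through $\|\p_t u_0\|_{L^{6/5}_x}$, which no $H^k_x$ norm controls.

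\textbf{Your recipe for the $|\a|=0$ macroscopic interactions does not close.} Consider the $\P$--$\P$ part $\frac1\v\l\p_x^\a(u_0a),\p_x^\a b\r_x$ of the $\up{div}_v(u_0 g\M)$ term. Smallness of $u_0$ alone cannot handle it, since $\D$ only controls $\v^{-1/2}\n_x b$; so $b$ must be split. That split, together with $\l R_2,u\r_x$, produces the trilinear terms $\l u_0a,u\r_x$, $\l m_0u,u\r_x$ and $\v\l au,u\r_x$. Each has two factors that $\D$ controls only through their gradients:
\begin{itemize}
\item with the coefficient in $L^3_x$ and one factor in $L^6_x$, the other factor must be in $L^2_x$, which gives $\E\D^{1/2}$ rather than $\E^{1/2}\D$;
\item with both factors in $L^6_x$, the coefficient must be in $L^{3/2}_x$, which $H^6_x$ does not control on $\R^3$.
\end{itemize}
Broad, low-frequency profiles show these terms are individually not bounded by $\D_{ma}+\E^{\frac12}\D$.

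The missing idea is the exact momentum-exchange cancellation. We have $\P[\M^{-1}\up{div}_v(u_0g\M)]=-(v\c u_0)a\M$, $\M^{-1}\up{div}_v(um_0M)=-m_0(v\c u)\M$ and $\P[\M^{-1}\up{div}_v(ug\M)]=-(v\c u)a\M$. Hence the macroscopic parts of the pairings of $-\frac1\v\M^{-1}\up{div}_v(u_0g\M+um_0M)$ and $-\M^{-1}\up{div}_v(ug\M)$ with $\p_x^\a g$ sum to $\frac1\v\l\p_x^\a(u_0a+m_0u+\v au),\p_x^\a b\r_x$. Adding $\l\p_x^\a R_2,\p_x^\a u\r_x$ gives
\[
\frac1\v\l\p_x^\a(u_0a+m_0u+\v au),\p_x^\a(b-\v u)\r_x .
\]
At $|\a|=0$ this is at most $\big(\|u_0\|_{L^3_x}\|a\|_{L^6_x}+\|m_0\|_{L^3_x}\|u\|_{L^6_x}+\v\|u\|_{L^3_x}\|a\|_{L^6_x}\big)\frac1\v\|b-\v u\|_{L^2_x}\lesssim\E^{\frac12}\D$.

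Similarly, rewrite $\l u\c\n_x u_0,u\r_x=-\l u\c\n_x u,u_0\r_x$ before applying H\"older.

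The paper's written proof does not use this cancellation either. In $B_{113}$ it bounds $\l u_0a,u\r_x$ by $\|u_0\|_{L^{3/2}_x}\|a\|_{L^6_x}\|u\|_{L^6_x}$ and invokes $\|u_0\|_{L^{3/2}_x}\lesssim\|u_0\|_{H^1_x}$. In $B_{122}$ it uses an analogous $L^{6/5}_x\lesssim H^1_x$ bound. Both are false on $\R^3$, so those steps should not be copied.
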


\begin{proof}
Applying the derivative operator $\p_x^\a$ with $0\leq|\a|\leq 4$ to the remainder system \eqref{Remainder equations}, taking $L^2_{x,v}$ and $L^2_x$-inner product with $\p_x^\a g$, $\p_x^\a u$, and then integrating by parts, we have, for $|\a'|=1$,
\begin{equation}\label{Estimate of nabla-g-u-rho-0}
\begin{aligned}
&\frac{1}{2}\frac{d}{dt}\big( \|\p_x^\a g\|^2_{L^2_{x,v}}+ \|\p_x^\a u \|^2_{L^2_{x}} \big)+\frac{C_0}{\v^2}\|\p_x^\a(\I-\P)g\|^2_{\nu}+\frac{1}{\v^2}\|\p_x^\a(b-\v u)\|^2_{L^2_{x}} + \|\p_x^{\a+\a'}u \|^2_{L^2_{x}}\\
&\leq\underbrace{\frac{1}{\v}\l\p_x^{\a} R_0, \p_x^\a g\r_{x,v}}_{B_{11}} + \underbrace{\l\p_x^\a R_1,\p_x^\a g\r_{x,v}}_{B_{12}} + \underbrace{\l\p_x^\a R_2,\p_x^\a u\r_x}_{B_{13}}  \underbrace{- \l\p_x^\a(u_0\c\n_xu+u\c\n_x u_0),\p_x^\a u\r_x}_{B_{14}},
\end{aligned}
\end{equation}
where \eqref{Decop-L} and \eqref{Poin inequality} are utilized.

For $B_{11}$, by recalling the definition of $R_0$ in \eqref{L-R0-R3}, we have
\begin{equation}\label{B11-0}
\begin{aligned}
B_{11}=&\underbrace{-\frac{1}{\v}\l\p_x^\a\big[(\I-\P_0)(v\c\n_x g_1)\big],\p_x^\a g\r_{x,v}}_{B_{111}}\underbrace{-\frac{1}{\v}\l\p_x^\a\big[\frac{1}{\M}\up{div}_v(u_0 g_1 \M)\big],\p_x^\a g\r_{x,v}}_{B_{112}}\\
&\underbrace{-\frac{1}{\v}\l\p_x^\a\big[\frac{1}{\M}\up{div}_v(u_0 g \M)\big],\p_x^\a g\r_{x,v}}_{B_{113}}\underbrace{-\frac{1}{\v}\l\p_x^\a\big[\frac{1}{\M}\up{div}_v(u m_0 M)\big],\p_x^\a g\r_{x,v}}_{B_{114}}.
\end{aligned}
\end{equation}
By noticing $g_1$ in \eqref{L-R0-R3} and $\l\p_x^\alpha(\I-\P_0)f,\p_x^\alpha g\r=\l\p_x^\alpha f,\p_x^\alpha(\I-\P_0)g\r_{x,v}$ in \eqref{self-adjoin of I-P}, we find, for $|\a'|=1$,
\begin{equation*}
\begin{aligned}
B_{111}=&-\frac{1}{\v}\l v \, \p_x^{\a+\a'}g_1,\p_x^\a(\I-\P)g\r_{x,v} - \frac{1}{\v}\l v \, \p_x^{\a+\a'}g_1,v\c\p_x^\a b\M\r_{x,v}\\
=&-\frac{1}{\v}\l v \, \p_x^{\a+\a'}(v\c u_0 m_0)\M,\p_x^\a(\I-\P)g\r_{x,v}-\frac{1}{\v}\l v\, \p_x^{\a+\a'}(v\c u_0)\M,\p_x^\a(\I-\P)g\r_{x,v}\\
&+\frac{1}{\v}\l v \,  \p_x^{\a+\a'}(v\c\n_x m_0)\M,\p_x^\a(\I-\P)g\r_{x,v},
\end{aligned}
\end{equation*}
where $(\I-\P_0)g=(\I-\P_0)(\I-\P)g+v\c b\M$ and $(\I-\P_0)(\I-\P)=\I-\P$ are used.\\
$B_{111}$ is bounded by, for $|\a'|=1$
\begin{equation*}\label{B111}
\begin{aligned}
|B_{111}| \leq &\ \frac{C}{\v}\sum_{ |\tilde{\a}| \leq |\a|}  \binom{\a}{\tilde{\a}} \|(\p_x^{\tilde{\a}}u_0,\p_x^{\tilde{\a}}m_0)\|_{L^4_x} \|(\p_x^{\a+\a'-\tilde{\a}}u_0,\p_x^{\a+\a'-\tilde{\a}}m_0)\|_{L^4_x} \|\p_x^\a(\I-\P)g\|_{L^2_{x,v}} \\
&\ +\frac{C}{\v} \| \p_x^\a u_0\|_{L^2_x} \|\p_x^\a(\I-\P)g\|_{L^2_{x,v}} + \frac{C}{\v} \|\p_x^{\a+\a'}m_0\|_{L^2_x} \|\p_x^\a(\I-\P)g\|_{L^2_{x,v}} \\
\leq&\ \frac{C_0}{2^4\v^2} \|\p_x^\a(\I-\P)g\|^2_{L^2_{x,v}} + C(\D_{ma}(t)+\E^{\frac{1}{2}}(t)\D(t)).
\end{aligned}
\end{equation*}
where Young's inequality is applied in the last inequality.\\
For $B_{112}$, by a direct calculation,
\begin{equation*}\label{B112-0}
\begin{aligned}
B_{112}=&\ \frac{1}{\v}\l\p_x^\a(u_0g_1),\p_x^\a\n_v(\I-\P)g\r_{x,v} + \frac{1}{\v}\l\p_x^\a(u_0 g_1),\p_x^\a\n_v\P g\r_{x,v}\\
&\ +\frac{1}{2\v}\l\p_x^\a(u_0 g_1),v\,\p_x^\a(\I-\P)g\r_{x,v}+\frac{1}{2\v}\l\p_x^\a(u_0 g_1),v\,\p_x^\a\P g\r_{x,v}\\
=&\ \frac{1}{\v}\l\p_x^\a(u_0g_1),\p_x^\a\n_v(\I-\P)g\r_{x,v}+\frac{1}{2\v}\l\p_x^\a(u_0g_1),v\,\p_x^\a(\I-\P)g\r_{x,v},
\end{aligned}
\end{equation*}
and considering the decomposition $g =\P g + (\I-\P) g$, $B_{112}$ is bounded by
\begin{equation*}\label{B112}
\begin{aligned}
|B_{112}| \lesssim &\  \frac{1}{\v} \left(\|u_0\|_{L^6_x} \|u_0\|_{L^6_x}\|m_0\|_{L^6_x}+\|u_0\|_{L^{3}_x}\|u_0\|_{L^6_x}+\|u_0\|_{L^4_x}\|\n_xm_0\|_{L^4_x} \right) \|(\I-\P)g\|_{\nu}\\
\lesssim &\ \frac{1}{\v}(\|\n_xu_0\|^2_{L^2_x} \|\n_xm_0\|_{L^2_x} +\|u_0\|_{H^1_x}\|\n_xu_0\|_{L^2_x} + \|u_0\|_{H^1_x} \|\n_xm_0\|_{H^1_x}) \|(\I-\P)g\|_{\nu}\\[4pt]
\lesssim&\ \E^{\frac{1}{2}}(t)\D(t), \quad \text{for}\ |\a|=0,  \\[8pt]
|B_{112}| \lesssim &\ \frac{1}{\v} \left(\|u_0\|_{H^{|\a|}_x}^2\|\n_x m_0\|_{H^{|\a|-1}_x} + \|u_0\|_{H^{|\a|}_x}\|\n_x u_0\|_{H^{|\a|-1}_x} + \|u_0\|_{H^{|\a|}_x}\|\n_x m_0\|_{H^{|\a|}_x}\right) \|\p_x^\a(\I-\P)g\|_{\nu}\\[4pt]
\lesssim& \ \E^{\frac{1}{2}}(t)\D(t),  \quad \text{for}\ 1 \leq |\a| \leq 4.
\end{aligned}
\end{equation*}
For $B_{113}$,
\begin{equation*}\label{B113}
\begin{aligned}
|B_{113}| \lesssim&\ \frac{1}{\v}\|\n_x u_0\|_{H^1_x}\big[\|(\n_x a,\n_x b)\|_{L^2_x}+\|(\I-\P)g\|_{\nu}\big]\|(\I-\P)g\|_{\nu} - \frac{1}{\v}\l u_0\c\n_v\P g-\frac{v}{2}\c u_0 \P g,\P g\r_{x,v}\\
\lesssim&\ \E^{\frac{1}{2}}(t)\D(t)+\frac{1}{\v}\l u_0 a,b-\v u\r_x+\l u_0 a, u\r_x \\
\lesssim&\ \E^{\frac{1}{2}}(t)\D(t)+\frac{1}{\v}\|u_0\|_{L^3_x}\|a\|_{L^6_x}\|b-\v u\|_{L^2_x} + \|u_0\|_{L^{\frac{3}{2}}_x}\|a\|_{L^6_x}\|u\|_{L^6_x}\\
\lesssim&\ \E^{\frac{1}{2}}(t)\D(t)+\frac{1}{\v}\|u_0\|_{H^1_x}\|\n_x a\|_{L^2_x} \|b-\v u\|_{L^2_x} + \|u_0\|_{H^1_x} \|\n_x a\|_{L^2_x} \|\n_x u\|_{L^2_x} \\[2pt]
\lesssim &\ \E^{\frac{1}{2}}(t)\D(t), \quad \text{for}\ |\a|=0,  \\[8pt]
|B_{113}| \lesssim &\ \frac{1}{\v}\|\n_x u_0\|_{H^{|\a|+1}_x}\big[\|(\n_x a,\n_x b)\|_{H^{|\a|-1}_x} + \sum_{|\a|\leq 3} \|\p_x^\a(\I-\P)g\|_{\nu}\big] \sum_{|\a|\leq 3}\|\p_x^\a(\I-\P)g\|_{\nu}\\
&\ -\frac{1}{\v}\l\p_x^\a(u_0\c\n_v\P g-\frac{v}{2}\c u_0 \P g),\p_x^\a\P g\r_{x,v}\\
\lesssim&\ \E^{\frac{1}{2}}(t)\D(t)+\frac{1}{\v}\l\p_x^\a(u_0 a),\p_x^\a(b-\v u)\r_x+\l\p_x^\a(u_0 a),\p_x^\a u\r_x\\
\lesssim&\ \E^{\frac{1}{2}}(t)\D(t)+\frac{1}{\v}\|\p_x^\a u_0\|_{L^3_x}\|a\|_{L^6_x}\|\p_x^\a(b-\v u)\|_{L^2_x} + \frac{1}{\v}\|u_0\|_{L^\infty_x}\|\p_x^\a a\|_{L^2_x} \|\p_x^\a(b-\v u)\|_{L^2_x} \\
&\ +\frac{1}{\v}\sum_{1 \leq |\tilde{\a}| \leq |\a|}\|\p_x^{\tilde{\a}}u_0\|_{L^4_x} \|\p_x^{\a-\tilde{\a}}a\|_{L^4_x} \|\p_x^\a(b-\v u)\|_{L^2_x} + \|\p_x^\a u_0\|_{L^3_x} \|a\|_{L^6_x} \|\p_x^\a u\|_{L^2_x} \\
&\ +\| u_0\|_{L^\infty_x} \|\p_x^\a a\|_{L^2_x} \|\p_x^\a u\|_{L^2_x} + \sum_{1 \leq |\tilde{\a}| \leq |\a|} \| \p_x^{\tilde{\a}}u_0\|_{L^4_x} \|\p_x^{\a-\tilde{\a}}a\|_{L^4_x} \|\p_x^\a u\|_{L^2_x} \\[2pt]
\lesssim &\ \E^\frac{1}{2}(t)\D(t), \quad \text{for}\ 1 \leq |\a| \leq 4.
\end{aligned}
\end{equation*}
Applying the similar argument as $B_{113}$, we have
\begin{equation*}\label{B114}
|B_{114}| \lesssim \E^\frac{1}{2}(t)\D(t).
\end{equation*}

Therefore, we summarize all the estimates on terms $B_{111}$, $B_{112}$, $B_{113}$ and $B_{114}$ above, and find that there exist constants $C_0, C >0$ such that
\begin{equation}\label{B11}
\begin{aligned}
|B_{11}| \leq \frac{C_0}{2^4\v^2}\|\p_x^\a(\I-\P)g\|^2_{L^2_{x,v}} + C \big(\D_{ma}(t) + \E^{\frac{1}{2}}(t)\D(t)\big).
\end{aligned}
\end{equation}


For $B_{12}$, recalling the definition of $R_1$ and $g_1$ in \eqref{L-R0-R3} and decomposition $g= \P g+(\I-\P)g$, then, for $|\a'|=1$,
\begin{equation}\label{B12-0}
\begin{aligned}
B_{12}=&-\l\p_t\p_x^\a g_1, \p_x^\a g\r_{x,v}-\l\p _x^\a\big[\frac{1}{\M}\up{div}_v(ug_1\M+ug\M)\big],\p_x^\a g\r_{x,v}\\
=&-\l\p_t\p_x^\a g_1,\p_x^\a(\I-\P)g\r_{x,v}-\l\p_t\p_x^\a g_1,\p_x^\a(a\M+v\c b\M)\r_{x,v}\\
&-\l\p_x^\a\big[\frac{1}{\M}\up{div}_v(ug_1\M+ug\M)\big],\p_x^\a g\r_{x,v}\\
=&-\l\p_t\p_x^\a g_1,v\c\p_x^\a b\M\r_{x,v}-\l\p _x^\a\big[\frac{1}{\M}\up{div}_v(ug_1\M+ug\M)\big],\p_x^\a g\r_{x,v}\\
=&\underbrace{-\l\p_t \p_x^\a(m_0u_0),\p^\a_x b\r_x}_{B_{121}} \underbrace{-\l \p_t\p_x^\a u_0+\p_t\p_x^{\a+\a'} m_0,\p^\a_x b\r_x}_{B_{122}} \\
& \underbrace{-\l\p_x^\a\big[\frac{1}{\M}\up{div}_v(ug_1\M+ug\M)\big],\p_x^\a g\r_{x,v}}_{B_{123}}.
\end{aligned}
\end{equation}
For $B_{121}$,
\begin{equation*}\label{B121}
\begin{aligned}
|B_{121}|=&\  |- \l (\p_t m_0) u_0 + m_0 (\p_t u_0), b-\v u\r_x - \v\l (\p_t m_0) u_0+m_0(\p_t u_0), u\r_x| \\[4pt]
\leq&\ \big( \|\p_tm_0\|_{L^2_x} \|u_0\|_{L^\infty_x} + \|\p_t u_0\|_{L^2_x} \| m_0\|_{L^\infty_x}\big) \|b-\v u\|_{L^2_x} \\
& \qquad \qquad \qquad \qquad \qquad \qquad  + \v\big( \|\p_tm_0\|_{L^2_x} \|u_0\|_{L^3_x}+\|\p_t u_0\|_{L^2_x} \| m_0\|_{L^3_x}\big) \|u\|_{L^6_x}\\[2pt]
\lesssim &\  \big(\|\p_tm_0\|_{L^2_x} \|u_0\|_{H^2_x}+\|\p_t u_0\|_{L^2_x} \| m_0\|_{H^2_x}\big) \|b-\v u\|_{L^2_x}\\
& \qquad \qquad \qquad \qquad \qquad \qquad  +  \v\big(\|\p_tm_0\|_{L^2_x} \|u_0\|_{H^1_x}+\|\p_t u_0\|_{L^2_x} \| m_0\|_{H^1_x} \big)\|\n_x u\|_{L^2_x} \\
\lesssim&\ \E^{\frac{1}{2}}(t)\D(t), \quad \text{for} \ |\a|=0, \\[8pt]
|B_{121}|=&|-\l\p_t \p_x^\a(m_0u_0),\p^\a_x(b-\v u)\r_x - \v\l\p_t \p_x^\a(m_0u_0),\p^\a_x u\r_x|\\[4pt]
\lesssim& \ \big(\|\p_tu_0\|_{H^{|\a|}_x}\|m_0\|_{H^{|\a|}_x}+\|u_0\|_{H^{|\a|}_x}\|\p_t m_0\|_{H^{|\a|}_x}\big) \|\p_x^\a(b-\v u)\|_{L^2_x} \\[2pt]
&\ +\v\big(\|\p_t u_0\|_{H^{|\a|}_x} \| m_0\|_{H^{|\a|}_x} \|\n_x u\|_{H^{|\a|-1}_x} + \|\p_t m_0\|_{H^{|\a|}_x} \| u_0\|_{H^{|\a|}_x} \|\n_x u\|_{H^{|\a|-1}_x}\big)\\[2pt]
\lesssim&\ \E^{\frac{1}{2}}(t)\D(t), \quad \text{for}\ 1 \leq |\a| \leq 4,
\end{aligned}
\end{equation*}
For $B_{122}$, we have, for $|\a'|=1$,
\begin{equation*}\label{B122}
\begin{aligned}
|B_{122}|=&\ |-\l\p_t u_0+\p_t\n_x m_0,b-\v u\r_x-\v\l\p_t u_0+\p_t\n_x m_0, u\r_x | \\
\leq&\ \frac{1}{2^4\v^2} \|b-\v u\|^2_{L^2_x} + C\|\p_t u_0+\p_t\n_x m_0\|^2_{L^2_x} + \v\|\p_t u_0+\p_t\n_x m_0\|_{L^{\frac{6}{5}}_x} \|u\|_{L^6_x}\\
\leq&\ \frac{1}{2^4\v^2} \|b-\v u\|^2_{L^2_x} + C\|(\p_t u_0,\p_t\n_x m_0)\|^2_{L^2_x} + C\|(\p_t u_0,\p_t\n_x m_0)\|^2_{H^1_x}+\frac{1}{2^4}\|\n_xu\|^2_{L^2_x} \\
\leq&\ C\D_{ma}(t)+\frac{1}{2^4\v^2} \|b-\v u\|^2_{L^2_x} + \frac{1}{2^4}\|\n_xu\|^2_{L^2_x}, \quad \text{for} \ |\a|=0,  \\[8pt]
|B_{122}|=& \ |-\l\p_t \n_x^\a u_0+\p_t\n_x^{\a+\a'}m_0,\p_x^\a(b-\v u)\r_x-\v\l\p_t\p_x^\a u_0+\p_t\p_x^{\a+\a'}m_0, \n_x^\a u\r_x |\\
\leq&\ \frac{1}{4^3\v^2} \|\p_x^\a(b-\v u)\|^2_{L^2_x} + C\|\p_t\p_x^\a u_0\|^2_{L^2_x} + \v\|\p_t\n_x u_0\|_{H^{|\a|-1}_x} \| \n_x u\|_{H^{|\a|-1}_x}\\
\leq &\ C \D_{ma}(t)+\frac{1}{2^4\v^2} \|\p_x^\a(b-\v u)\|^2_{L^2_x} + \frac{1}{2^4}\|\p_x^{\a+\a'}u\|^2_{L^2_x}, \quad \text{for}\ 1 \leq |\a| \leq 4.
\end{aligned}
\end{equation*}
Combining the estimates for $B_{121}$ and $B_{122}$ above, we have, for $|\a'|=1$,
\begin{multline}\label{B121-B122}
    \Big| -\l\p_t\p_x^\a g_1,v\c\p_x^\a b\M\r_{x,v}\Big| \leq \ C \left(\E^{\frac{1}{2}}(t)\D(t) + \D_{ma}(t) \right)
    + \frac{1}{2^4\v^2}\|\p_x^\a(b-\v u)\|^2_{L^2_x} + \frac{1}{2^4}\|\p_x^{\a+\a'}u\|^2_{L^2_x}.
\end{multline}
Applying the similar argument as $B_{113}$ in \eqref{B113}, $B_{123}$ is bounded by
\begin{equation*}\label{B123}
\begin{aligned}
|B_{123}|=& \ \Big| -\l\p_x^\a\big[\frac{1}{\M}\up{div}_v(ug_1\M+ug\M)\big],\p_x^\a g\r_{x,v} \Big|\\
\lesssim& \ \E^{\frac{1}{2}}(t)\D(t).
\end{aligned}
\end{equation*}

Hence, collecting the previous estimations on $B_{121}$, $B_{123}$ and $B_{123}$, we obtain, for $|\a'|=1$,
\begin{equation}\label{B12}
\begin{aligned}
|B_{12}| \leq C \left( \D_{ma}(t)+\E^{\frac{1}{2}}(t)\D(t) \right) + \frac{1}{2^4\v^2} \|\p_x^\a(b-\v u)\|^2_{L^2_x} + \frac{1}{2^4}\|\p_x^{\a+\a'}u\|^2_{L^2_x}.
\end{aligned}
\end{equation}


By recalling the definition of $R_2$ in \eqref{L-R0-R3} and the similar estimates of $B_{11}$, for $B_{13}$ and $B_{14}$,
\begin{equation}\label{B13-B14}
\begin{aligned}
|B_{13}| + |B_{14}| \lesssim &\ \E^{\frac{1}{2}}(t)\D(t).
\end{aligned}
\end{equation}

Finally, inserting the estimates \eqref{B11}, \eqref{B12}, \eqref{B13-B14} into \eqref{Estimate of nabla-g-u-rho-0}, and then summing up $\a$ from 0 to 4, the proof of the lemma is completed.
\end{proof}

Next, we deduce \textit{a priori} estimates about mixed partial derivative $(\I-\P)g$. Taking the projection operator $\I-\P$ to the remainder equation $\eqref{Remainder equations}_1$, it follows that
\begin{equation}\label{Remainder-equation-I-P-g}
\begin{aligned}
\p_t(\I-\P)g+\frac{1}{\v^2}L(\I-\P)g=\frac{1}{\v}(\I-\P)R_0+(\I-\P)R_1,
\end{aligned}
\end{equation}
where the fact $L g=L(\I-\P)g+\P_1 g$ in \eqref{Decop-L} is used, and $R_0,\,R_1$ are given as in \eqref{L-R0-R3}.


The lemma of \textit{a priori} estimates about mixed partial derivative $(\I-\P)g$ is shown as follows:

\begin{lemma}\label{Estiamte-Mi-kinetic-part-mixed-partial}
Let $(g,u)$ be the solution to the remainder system \eqref{Remainder equations}, then there exist constants $C_2,\,C_{1,2} > 0$ independent of $\v$ such that, for $t \geq 0$,
\begin{equation}\label{Step two}
\frac{1}{2}\frac{d}{dt}\E_{mi,K,2}(t)+C_{1,2}\D_{mi,K,2}(t)\leq C_2 \big( \D_{mi,K,1}(t)+\D_{mi,F}(t)+\D_{ma}(t)+\E^{\frac{1}{2}}(t)\D(t)\big),
\end{equation}
where the energy and dissipation functionals $\E_{mi,K,2}(t)$, $\D_{mi,K,2}(t)$, $\D_{mi,K,1}(t)$, $\D_{mi,F}(t)$, $\D_{ma}(t)$, $\E(t)$, and $\D(t)$ are defined in \eqref{macro energy functionals and energy dissipation functionals}, \eqref{part of energy functionals}, and \eqref{energy functionals and energy dissipation functional}, respectively.
\end{lemma}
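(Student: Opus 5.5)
The plan is to run a weighted energy estimate on the microscopic equation \eqref{Remainder-equation-I-P-g}, differentiated in both variables. For each multi-index pair with $|\a|+|\b|\leq 3$ and $|\b'|=1$, I apply $\p_x^\a\p_v^{\b+\b'}$ to \eqref{Remainder-equation-I-P-g} and take the $L^2_{x,v}$ inner product with $\p_x^\a\p_v^{\b+\b'}(\I-\P)g$. Writing $L=-\Delta_v+\frac{|v|^2}{4}-\frac32$, the leading term gives
\[
\frac{1}{\v^2}\big\langle \p_x^\a\p_v^{\b+\b'}L(\I-\P)g,\p_x^\a\p_v^{\b+\b'}(\I-\P)g\big\rangle_{x,v}\ \geq\ \frac{c}{\v^2}\|\p_x^\a\p_v^{\b+\b'}(\I-\P)g\|_\nu^2-\frac{C}{\v^2}\sum_{\b_1<\b+\b'}\|\p_x^\a\p_v^{\b_1}(\I-\P)g\|_\nu^2 .
\]
The loss comes from the commutator $[\p_v^{\b+\b'},|v|^2/4]$, which only involves $v$-derivatives of strictly lower order (times at most one power of $v$, absorbed by the $\nu$-weight). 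These lower-order terms are handled by induction on $|\b|$. At the bottom level ($\b_1=0$) they are bounded by $\D_{mi,K,1}$, since $|\a|\leq 3\leq 4$. At intermediate levels they are bounded by the dissipation of the previously treated indices. This is why the energy $\E_{mi,K,2}$ carries weights $\bar C_{\a,\b}$, chosen hierarchically with $\bar C_{\a,\b}\ll \bar C_{\a,\b_1}$ for $\b_1<\b$, so that each commutator loss is absorbed. Once the weights are summed, the result is $C_{1,2}\D_{mi,K,2}-C_2\D_{mi,K,1}$ on the left.

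The source terms are $\frac{1}{\v}(\I-\P)R_0$ and $(\I-\P)R_1$. I would expand $(\I-\P)R_0=R_0-\P R_0$ and note that $\P R_0$ is a finite combination of $\M,v\M$ with coefficients given by $x$-derivatives of macroscopic quantities.

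1. The linear transport part $-\frac1\v(\I-\P)(v\c\n_x g)$ is the main danger, since it carries $\frac1\v$ and one extra $x$-derivative. I split $g=\P g+(\I-\P)g$.
   1. The microscopic piece $\frac1\v\l\p_x^{\a}\p_v^{\b+\b'}(\I-\P)(v\c\n_x(\I-\P)g),\cdot\r$ has $|\a|+1\leq 4$ $x$-derivatives and total order at most $4$. If $|\b+\b'|\geq 1$ I cannot integrate the $x$-derivative by parts for free. Instead I use Cauchy--Schwarz with weight $\nu$: it is bounded by $\frac{\eta}{\v^2}\|\p_x^\a\p_v^{\b+\b'}(\I-\P)g\|_\nu^2+\frac{C_\eta}{1}\cdot\frac{1}{\v^2}\|\p_x^{\a+\a'}\p_v^{\b_1}(\I-\P)g\|_\nu^2$ with $|\b_1|\leq|\b|+1$ and $|\a+\a'|+|\b_1|\leq 4$. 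When $\b_1$ is of lower total $v$-order with one more $x$-derivative, this is exactly where the hierarchical weights must also decrease in $|\a|$, i.e.\ I order the induction by $|\b|$ with $\bar C_{\a,\b}$ small relative to indices having fewer $v$-derivatives. The base case is again $\D_{mi,K,1}$, which controls up to four $x$-derivatives of $(\I-\P)g$ in $\nu$.
   2. The macroscopic piece $v\c\n_x\P g$ produces $\frac1\v\|(\n_x a,\n_x b)\|_{H^3}\cdot\frac1\v\|\cdots(\I-\P)g\|_\nu$. By Young this is $\frac{\eta}{\v^2}\|\cdot\|^2_\nu+\frac{C}{\v^0}\cdot(\frac1\v\|\n_x b\|^2+\|\n_x a\|^2)$. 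Hence it is bounded by $\D_{mi,F}$, which has the weight $\frac1\v$ on $b$.
   3. For the $a$-part I would use the factor $\frac1\v$ more carefully. Since $(\I-\P)(v\c\n_x a\M)=0$, the $a$-contribution vanishes after projection, and only $b$ survives through $(\I-\P)(v\otimes v\M):\n_x b$. This is precisely why $\D_{mi,F}$ contains $\frac1\v\|\n_x b\|^2$: Young gives $\frac{\eta}{\v^2}\|\cdot\|_\nu^2+C\frac{1}{\v}\cdot\v^{-1}\ldots$. If the power is off by $\v$, I would instead use $b=(b-\v u)+\v u$ and bound by $\D_{mi,K,1}$ and $\|\n_x u\|_{H^4}$.

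2. The $g_1$-terms in $R_0$ and $\p_t g_1$ in $R_1$ are handled as $B_{111}$ and $B_{121}$–$B_{122}$ were in Lemma \ref{Estimate-kinetic-Mi-1}, using Gaussian decay in $v$ to absorb $v$-derivatives. They give $\D_{ma}+\E^{1/2}\D$ plus small multiples of the dissipation.

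3. The nonlinear terms $\div_v(u_0 g\M)/\M$, $\div_v(u m_0 M)/\M$ and $\div_v(ug\M)/\M$ contain one $v$-derivative, and for $u_0g$ one $\frac1\v$. I integrate by parts in $v$ to move that derivative onto the test function, which lies in the $\nu$-dissipation with weight $\frac1{\v^2}$. This gives $\frac1\v\|u_0\|\cdot\|\cdot\|\cdot\frac{1}{\v}\|\cdot\|_\nu\lesssim \E^{1/2}\D$ via Sobolev embeddings as before.

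The main obstacle is step 1.1: the $\frac1\v$ transport term at top order when $|\a|=3$, $|\b|=0$. In that case $\p_x^{\a+\a'}$ reaches four $x$-derivatives, and that is only controlled through $\D_{mi,K,1}$. The choice of the $\bar C_{\a,\b}$ must therefore make all cross-terms absorbable, which fixes the final constants $C_2$ and $C_{1,2}$.
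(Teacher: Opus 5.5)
Your overall scheme matches the paper's. You differentiate \eqref{Remainder-equation-I-P-g} by $\p_x^\a\p_v^{\b+\b'}$ and absorb the lower-$v$-order losses (from $[\p_v^{\b+\b'},|v|^2/4]$ and from streaming) by induction on the number of $v$-derivatives, with hierarchical weights $\bar C_{\a,\b}$ and the bottom level in $\D_{mi,K,1}$. Macroscopic pieces go to $\D_{mi,F}$, $g_1$-pieces to $\D_{ma}$, and nonlinear pieces to $\E^{1/2}\D$. Your coercivity step keeps the loss $\frac{C}{\v^2}\sum_{\b_1<\b+\b'}\|\p_x^\a\p_v^{\b_1}(\I-\P)g\|_\nu^2$. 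That is more accurate than the paper's direct use of \eqref{Poin inequality}, since $\p_v^{\b+\b'}(\I-\P)g$ is not microscopic. In 1.2--1.3 no hedging is needed: $\|\n_x b\|^2_{H^3_x}\le\frac1\v\|\n_x b\|^2_{H^3_x}\le\D_{mi,F}$ because $\v\le1$.

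The one place your justification is wrong is Step 1.1. Write $h=(\I-\P)g$, $\gamma=\b+\b'$, and let $e_i$ be the $i$-th unit multi-index.
\begin{itemize}
\item Your claim that the $x$-derivative ``cannot be integrated by parts'' is false. For every $F$ one has $\l v\c\n_x F,F\r_{x,v}=0$. So in $\p_v^\gamma(v\c\n_x h)=v\c\n_x\p_v^\gamma h+\sum_i\gamma_i\p_{x_i}\p_v^{\gamma-e_i}h$ the first piece drops out and only the commutator survives. This is the paper's $B_{2321}$, bounded by $\frac{\eta}{\v^2}\|\p_x^\a\p_v^\gamma h\|^2+C\|\p_x^{\a+e_i}\p_v^{\gamma-e_i}h\|^2$.
\item Your count ``total order at most $4$'' is also wrong before any such reduction. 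The leading piece carries $|\a|+|\b|+2$ derivatives, i.e.\ $5$ at top order. A remainder with $|\b_1|=|\b|+1$, which you allow, is either out of range (top order) or sits at the same $v$-level. There the induction on $|\b|$ cannot absorb it.
\end{itemize}

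Your Cauchy--Schwarz route can still be made to close, as a small variant of the paper's argument, but you must say how.
\begin{itemize}
\item Put $v$ on the test function: $\|v\,\p_x^\a\p_v^\gamma h\|\le\|\p_x^\a\p_v^\gamma h\|_\nu$.
\item Absorb one $v$-derivative of the other factor into the $\n_v$-part of the $\nu$-norm one level down: $\|\p_x^{\a+\a'}\p_v^{\gamma}h\|\le\|\p_x^{\a+\a'}\p_v^{\gamma-e}h\|_\nu$ with $e\le\gamma$.
\item This right-hand side lies in $\D_{mi,K,2}$ if $|\b|\ge1$, and in $\D_{mi,K,1}$ if $\b=0$. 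The latter is presumably what you meant by the case $|\a|=3$, $\b=0$.
\end{itemize}
In either version $|\b_1|=|\b|$. So the weights only need to decrease with the number of $v$-derivatives, not with $|\a|$. The paper's cancellation is the cleaner mechanism. Yours avoids it, but relies essentially on the dissipation being measured in $\nu$-norms, which contain $\n_v$.
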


\begin{proof}
By applying $\p_x^\a \p_v^\b$ with $1\leq|\a|+|\b|\leq 4$ and $ |\b| \geq 1 $ to the system \eqref{Remainder-equation-I-P-g}, multiplying by $\p_x^\a\p_v^\b(\I-\P) g$, and integrating over $\mathbb{R}^3 \times \mathbb{R}^3 $, we obtain, for $|\b'|=1$ and $|\b''|=2$,
\begin{equation}\label{Estimate of mix-partial-I-P-g-1}
\begin{aligned}
&\frac{1}{2}\frac{d}{dt}\|\p_x^\a\p_v^\b(\I-\P)g\|^2_{L^2_{x,v}} + \frac{C_0}{\v^2}\|\p_x^\a\p_v^\b(\I-\P)g\|^2_{\nu} \\
\leq &\ \underbrace{-\frac{ \binom{\b}{\b'} }{2\v^2}\l v\p_x^\a\p_v^{\b-\b'}(\I-\P)g,\p_x^\a\p_v^\b(\I-\P)g\r_{x,v}}_{B_{21}} \underbrace{-\frac{\binom{\b}{\b''}}{2\v^2}\l \p_x^\a\p_v^{\b-\b''}(\I-\P)g,\p_x^\a\p_v^{\b-\b''}\Delta_v(\I-\P)g\r_{x,v}}_{B_{22}} \\
&\ +\underbrace{\frac{1}{\v}\l\p_x^\a\p_v^\b(\I-\P)R_0,\p_x^\a\p_v^\b(\I-\P)g\r_{x,v}}_{B_{23}} + \underbrace{\l\p_x^\a\p_v^\b(\I-\P)R_1,\p_x^\a\p_v^\b(\I-\P)g\r_{x,v}}_{B_{24}}
\end{aligned}
\end{equation}
where the inequality \eqref{Poin inequality} and the following calculation are used
\begin{equation*}
    \p_x^\a\p_v^\b L(\I-\P)g= L[\p_x^\a\p_v^\b(\I-\P)g] + \frac{\binom{\b}{\b'}}{2} v\p_x^\a\p_v^{\b-\b'}(\I-\P)g + \frac{\binom{\b}{\b''}}{2}\p_x^\a\p_v^{\b-\b''}(\I-\P)g I.
\end{equation*}

For $B_{21}$ and $B_{22}$, by applying integration by parts, for $|\b'|=1$ and $|\b''|=2$,
\begin{equation}\label{B41-B42}
\begin{aligned}
|B_{21}| + |B_{22}|= &\ \Big|-\frac{\binom{\b}{\b'}}{2\v^2}\l v\p_x^\a\p_v^{\b-\b'}(\I-\P)g,\p_x^\a\p_v^\b(\I-\P)g\r_{x,v} \Big| \\
& \qquad \qquad \qquad \qquad \qquad + \Big| -\frac{\binom{\b}{\b''}}{2\v^2}\l\p_x^\a\p_v^{\b-\b''}(\I-\P)g,\p_x^\a\p_v^{\b-\b''}\Delta_v(\I-\P)g\r_{x,v} \Big|\\
\leq&\ \frac{C_0}{2^6\v^2}\|\p_x^\a\p_v^\b(\I-\P)g\|_{\nu}^2+\frac{C}{\v^2}\|\p_x^\a\p_v^{\b-\b'}(\I-\P)g\|^2_{L^2_{x,v}},
\end{aligned}
\end{equation}
where the Young inequality and H$\ddot{\up{o}}$lder inequality are used in the inequality.

Recalling $R_0$ in \eqref{L-R0-R3}, and $(\I-\P_0)(\I-\P)g=(\I-\P)g$ in \eqref{I-P}, $B_{23}$ can be divided into five parts:
\begin{equation}\label{B43-0}
\begin{aligned}
B_{23}=&\underbrace{-\frac{1}{\v}\l\p_x^\a\p_v^\b(\I-\P)(v\c\n_x g_1),\p_x^\a\p_v^\b(\I-\P)g\r_{x,v}}_{B_{231}}\underbrace{-\frac{1}{\v}\l\p_x^\a\p_v^\b(\I-\P)(v\c\n_x g),\p_x^\a\p_v^\b(\I-\P)g\r_{x,v}}_{B_{232}}\\
&\underbrace{-\frac{1}{\v}\l\p_x^\a\p_v^\b(\I-\P)\big[\frac{1}{\M}\up{div}_v(u_0 g_1 \M)\big],\p_x^\a\p_v^\b(\I-\P)g\r_{x,v}}_{B_{233}}\\
&\underbrace{-\frac{1}{\v}\l\p_x^\a\p_v^\b(\I-\P)\big[\frac{1}{\M}\up{div}_v(u_0 g \M)\big],\p_x^\a\p_v^\b(\I-\P)g\r_{x,v}}_{B_{234}}\\
&\underbrace{- \frac{1}{\v}\l\p_x^\a\p_v^\b(\I-\P)\big[\frac{1}{\M}\up{div}_v(u m_0 M)\big],\p_x^\a\p_v^\b(\I-\P)g\r_{x,v}}_{B_{235}}.
\end{aligned}
\end{equation}
For $B_{231}$, by considering the Sobolev embedding and H$\ddot{\up{o}}$lder inequality, we have
\begin{equation*}\label{B431}
\begin{aligned}
|B_{231}|\leq&\ \frac{1}{\v} \|\p_x^\a\p_v^\b(v\c\n_x g_1)\|_{L^2_{x,v}} \|\p_x^\a\p_v^\b(\I-\P)g\|_{L^2_{x,v}}  \\
\leq&\ \frac{C_0}{2^6\v^2}\|\p_x^\a\p_v^\b(\I-\P)g\|^2_{L^2_{x,v}} + C\big(\D_{ma}(t)+\E^{\frac{1}{2}}(t)\D(t)\big).
\end{aligned}
\end{equation*}
For $B_{232}$, since $(\I-\P)(v\c\n_x g)=v\c\n_x g-\P(v\c\n_x g)=v\c\n_x(\I-\P)g+v\c\n_x\P g-\P(v\c\n_x g)$,
\begin{equation*}
\begin{aligned}
B_{232}=&\ -\frac{1}{\v}\l\p_x^\a\p_v^\b(\I-\P)(v\c\n_x g),\p_x^\a\p_v^\b(\I-\P)g\r_{x,v}\\
=&\ \underbrace{-\frac{1}{\v}\l\p_x^\a\p_v^\b v\c\n_x (\I-\P)g,\p_x^\a\p_v^\b(\I-\P)g\r_{x,v}}_{B_{2321}} \underbrace{-\frac{1}{\v}\l\p_x^\a\p_v^\b v\c\n_x \P g,\p_x^\a\p_v^\b(\I-\P)g\r_{x,v}}_{B_{2322}} \\
& +\underbrace{\frac{1}{\v}\l\p_x^\a\p_v^\b \P (v\c\n_x g),\p_x^\a\p_v^\b(\I-\P)g\r_{x,v}}_{B_{2323}}.
\end{aligned}
\end{equation*}
For $B_{2321}$ and $B_{2322}$, we have, for $|\a'|=|\b'|=1$ and $|\b''|=2$,
\begin{equation*}
\begin{aligned}
|B_{2321}|=&\ \Big|-\frac{\binom{\b}{\b'}}{\v} \l\p_x^{\a+\a'}\p_v^{\b-\b'}(\I-\P)g,\p_x^\a\p_v^\b(\I-\P)g\r_{x,v} \Big|\\
\leq&\ \frac{C_0}{2^8\v^2} \|\p_x^\a\p_v^\b(\I-\P)g\|^2_{L^2_{x,v}} + C\|\p_x^{\a+\a'}\p_v^{\b-\b'}(\I-\P)g\|^2_{L^2_{x,v}}.
\end{aligned}
\end{equation*}
\begin{equation*}
\begin{aligned}
|B_{2322}|=&\ \Big| -\frac{1}{\v}\l\p_x^\a\p_v^\b[(v\c\n_x a+v\otimes v:\n_x b)\M],\p_x^\a\p_v^\b(\I-\P)g\r_{x,v} \Big|\\
\leq&\ \frac{C_0}{2^8\v^2} \|\p_x^\a\p_v^\b(\I-\P)g\|^2_{L^2_{x,v}} + C\|(\p_x^{\a+\a'}a,\p_x^{\a+\a'}b)\|^2_{L^2_{x}}\\
\leq&\ \frac{C_0}{2^8\v^2} \|\p_x^\a\p_v^\b(\I-\P)g\|^2_{L^2_{x,v}} + C\D_{mi,F}(t).
\end{aligned}
\end{equation*}
Furthermore, for $B_{2323}$, noticing that
\begin{equation*}
\begin{aligned}
\P(v\c\n_x g)=&\ \P[v\c\n_x\P g]+\P[v\c\n_x(\I-\P)g]\\
=& \ (v\c\n_x a+\up{div}_x b)\M+\P[v\c\n_x(\I-\P)g]\\
=&\ \big[v\c\n_x a+\up{div}_x b+\l v\c\n_x(\I-\P)g, \M\r_v+v\c\l v\c\n_x(\I-\P)g,v\M\r_v\big]\M,
\end{aligned}
\end{equation*}
we have, for $|\a'|=1$,
\begin{equation*}
\begin{aligned}
|B_{2323}|=&\ \Big|\frac{1}{\v} \l \p_x^\a\p_v^\b \P (v\c\n_x g), \p_x^\a\p_v^\b(\I-\P)g \r_{x,v} \Big|\\
\leq &\ \frac{C_0}{2^8\v^2} \|\p_x^\a\p_v^\b(\I-\P)g\|^2_{L^2_{x,v}}  +C\|(\p_x^{\a+\a'}a,\p_x^\a\up{div}_x b)\|^2_{L^2_{x}} +C\|\p_x^{\a+\a'}(\I-\P)g\|^2_{L^2_{x,v}} \\
\leq &\ \frac{C_0}{2^8\v^2}\|\p_x^\a\p_v^\b(\I-\P)g\|^2_{L^2_{x,v}} + C\big(\D_{mi,F}(t)+\D_{mi,K,1}(t)\big).
\end{aligned}
\end{equation*}
Therefore, we have, for $|\a'|=|\b'|=1$,
\begin{equation*}\label{B432}
\begin{aligned}
|B_{232}|\leq &\ \frac{C_0}{2^6\v^2} \|\p_x^\a\p_v^\b(\I-\P)g\|^2_{L^2_{x,v}} + C\big[\|\p_x^{\a+\a'}\p_v^{\b-\b'}(\I-\P)g\|^2_{L^2_{x,v}} + \D_{mi,F}(t)+\D_{mi,K,1}(t)\big],
\end{aligned}
\end{equation*}
where $\D_{mi,K,1}(t)$, $\D_{mi,F}(t)$ are defined in \eqref{part of energy functionals}.\\
For $B_{233}$ to $B_{235}$, by a direct calculation,
\begin{equation*}
    \P\big[\frac{1}{\M}\up{div}_v(u_0 g_1 \M)\big]=0 \quad \text{and} \quad \P\big[\frac{1}{\M}\up{div}_v(u_0 g \M)\big]=- (v \c u_0) a\M,
\end{equation*}
we have
\begin{equation*}\label{B433}
\begin{aligned}
|B_{233}|=&\ \Big| -\frac{1}{\v}\l\p_x^\a\p_v^\b\big[\frac{1}{\M}\up{div}_v(u_0 g_1 \M)\big],\p_x^\a\p_v^\b(\I-\P)g\r_{x,v} \Big|\\
=&\ \Big| -\frac{1}{\v}\l \p_x^\a\p_v^\b [u_0\c\n_v g_1-\frac{1}{2} (v\c u_0) g_1],\p_x^\a\p_v^\b(\I-\P)g \r_{x,v} \Big|\\
\leq &\ \frac{1}{\v}\| \p_x^\a\p_v^\b [u_0\c\n_v g_1-\frac{1}{2} (v\c u_0) g_1]\|_{L^2_{x,v}}  \|\p_x^\a\p_v^\b(\I-\P)g\|_{L^2_{x,v}} \\
\lesssim &\ \E^{\frac{1}{2}}(t)\D(t).
\end{aligned}
\end{equation*}
\begin{equation*}\label{B434}
\begin{aligned}
|B_{234}|=&\ \Big| -\frac{1}{\v}\l\p_x^\a\p_v^\b\big\{\frac{1}{\M}\up{div}_v[u_0(\I-\P)g \M]\big\},\p_x^\a\p_v^\b(\I-\P)g\r_{x,v} \Big|\\
&\ + \Big|-\frac{1}{\v}\l\p_x^\a\p_v^\b\big\{\frac{1}{\M}\up{div}_v[u_0 (a+v\c b)M]\big\},\p_x^\a\p_v^\b(\I-\P)g\r_{x,v}\Big|\\
&\ + \Big|\frac{1}{\v}\l \p_x^\a\p_v^\b[(v\c u_0) a\M], \p_x^\a\p_v^\b(\I-\P)g\r_{x,v}\Big|\\
\leq&\ \frac{1}{\v}\|\p_x^\a\p_v^\b\big\{\frac{1}{\M}\up{div}_v[u_0(\I-\P)g \M]\big\}\|_{L^2_{x,v}} \|\p_x^\a\p_v^\b(\I-\P)g\|_{L^2_{x,v}} \\
&\ +\frac{1}{\v}\|\p_x^\a\p_v^\b\big\{\frac{1}{\M}\up{div}_v[u_0 (a+v\c b)M]\big\}\|_{L^2_{x,v}} \|\p_x^\a\p_v^\b(\I-\P)g\|_{L^2_{x,v}}\\
&\ +\|\p_x^\a\p_v^\b\big\{\frac{1}{\M}\up{div}_v[u_0 (a+v\c b)M]\big\}\|_{L^2_{x,v}} \|\p_x^\a\p_v^\b(\I-\P)g\|_{L^2_{x,v}} \\
\lesssim &\ \E^\frac{1}{2}(t)\D(t).
\end{aligned}
\end{equation*}
\begin{equation*}\label{B435}
\begin{aligned}
|B_{235}|=&\ \Big|-\frac{1}{\v}\l\p_x^\a\p_v^\b\big[\frac{1}{\M}\up{div}_v(u m_0 M)\big],\p_x^\a\p_v^\b(\I-\P)g\r_{x,v}|\\
&\ +|\frac{1}{\v}\l\p_x^\a\p_v^\b[(v\c u) m_0\M],\p_x^\a\p_v^\b(\I-\P)g\r_{x,v}\Big|\\
\leq&\ \frac{1}{\v}\|\p_x^\a\p_v^\b\big[\frac{1}{\M}\up{div}_v(u m_0 M)\big]\|_{L^2_{x,v}} \|\p_x^\a\p_v^\b(\I-\P)g\|_{L^2_{x,v}} \\
&\ +\frac{1}{\v}\|\p_x^\a\p_v^\b[(v\c u) m_0\M]\|_{L^2_{x,v}} \|\p_x^\a\p_v^\b(\I-\P)g\|_{L^2_{x,v}}\\
\lesssim &\ \E^\frac{1}{2}(t)\D(t).
\end{aligned}
\end{equation*}

By collecting the estimates on $B_{231}$, $B_{232}$, $B_{233}$, $B_{234}$, $B_{235}$ above and inserting them into \eqref{B43-0}, we obtain the estimate for $B_{23}$ that, for $|\a'|=|\b'|=1$,
\begin{multline}\label{B43}
|B_{23}| \leq C\big[\|\p_x^{\a+\a'}\p_v^{\b-\b'}(\I-\P)g\|^2_{L^2_{x,v}} + \D_{mi,F}(t)+\D_{mi,K,1}(t)+\E^\frac{1}{2}(t)\D(t)\big] +\frac{C_0}{2^5\v^2}\|\p_x^\a\p_v^\b(\I-\P)g\|^2_{L^2_{x,v}}.
\end{multline}


Recalling the definition of $R_1$ in \eqref{L-R0-R3}, using a similar estimate as $B_{23}$, for $B_{24}$, we have
\begin{equation}\label{B44}
\begin{aligned}
|B_{24}|\leq &\ C\big(\D_{mi,F}(t)+\E^\frac{1}{2}(t)\D(t)\big) + \frac{C_0}{2^6\v^2}\|\p_x^\a\p_v^\b(\I-\P)g\|^2_{L^2_{x,v}}.
\end{aligned}
\end{equation}

Notice that the quantity $\frac{C}{\v^2}\|\p_x^\a\p_v^{\b-\b'}(\I-\P)g\|^2_{L^2_{x,v}}$ in \eqref{B41-B42} is still not controlled. However, by observing that the orders of $v$-derivatives in this quantity is $|\b|-1$, we can employ an induction over $|\b|$ and combine \eqref{Estimate of mix-partial-I-P-g-1}, \eqref{B41-B42}, \eqref{B43}, \eqref{B44} to find $C_{1,2}\, C_2 >0 $ such that
\begin{equation}\label{Estimate of mix-partial-I-P-g}
\begin{aligned}
\frac{1}{2}\frac{d}{dt}\E_{mi,K,2}(t) + C_{1,2}\D_{mi,K,2}(t)\leq C_2\big( \D_{mi,K,1}(t)+\D_{mi,F}(t)+\D_{ma}(t) + \E^{\frac{1}{2}}(t)\D(t)\big).
\end{aligned}
\end{equation}

This completes the proof of the lemma.

\end{proof}


\subsection{\textit{A priori} estimates for the macroscopic part of the remainder system \eqref{Remainder equations}}
\label{subsec:remainder_fluid}

The goal of this subsection is to obtain the energy dissipation rate of $\frac{1}{\v} \| \n_x b\|^2_{L^2_x}+\|\n_x a\|^2_{L^2_x}$. Inspired by \cite{CJADRJMA11}, we propose an auxiliary hyperbolic-parabolic coupled system of $a$ and $b$:
\begin{equation}\label{Pg-a-b}
\left\{
\begin{aligned}
&\v\p_t a+\up{div}_x b=0,\\
&\p_t b_i+\frac{1}{\v}\p_{x_i} a + \frac{1}{\v^2} (b_i-\v u_i)+\frac{1}{\v}\sum_{j=1}^3\p_{x_j}\Gamma_{ij}[(\I-\P)g]= \frac{1}{\v}R_{4i}+R_{5i},\\
&\frac{1}{\v}(\p_{x_i}b_j+\p_{x_j}b_i)-\big[(\frac{1}{\v}u_{0i}+u_i)b_j+(\frac{1}{\v}u_{0j}+u_j)b_i\big] =- \p_t \Gamma_{ij}[(\I-\P)g]-\frac{2}{\v^2}\Gamma_{ij}[(\I-\P)g]+\Gamma_{ij}(l),\\
\end{aligned}
\right.
\end{equation}
for $1\leq i,j\leq 3$, where $a, b$ are defined as in \eqref{ab}, $R_{4i}$, $R_{5i}$, $l$ and $\Gamma_{ij}(g)$ are given by
\begin{equation}\label{R4i-R5i-Gamma-l}
\begin{aligned}
R_{4i}=&\ u_{0i}a+u_i m_0,\\
R_{5i}=&\ -\big[\p_t(u_{0i} m_0)+u_i a+\p_t u_{0i}+\p_t\p_{x_i} m_0\big],\\
l=&\ -\frac{1}{\v}\big[(\I-\P_0) (v\c\n_x g_1) + v\c\n_x(\I-\P)g+\frac{1}{\M}\up{div}_v(u_0 g_1 \M)\big]-\frac{1}{\M}\up{div}_v(u g_1\M),\\
\Gamma_{ij}(g)=&\ \int g(v_i v_j-1)\M dv.
\end{aligned}
\end{equation}

\begin{lemma}\label{Estiamte-Mi-fluid-part}
Let $(g,u)$ be the solution to the remainder system \eqref{Remainder equations}, there exist constants $C_3,\,C_{1,3} > 0$ independent of $\v$ and $t$ such that, for any $t \geq 0$,
\begin{equation}\label{Last step}
\begin{aligned}
\frac{1}{2}\frac{d}{dt}\E_{mi,F}(t)+C_{1,3}\D_{mi,F}(t)\leq C_3 \big( \D_{mi,K,1}(t)+\D_{ma}(t)+\E^{\frac{1}{2}}(t)\D(t)\big),
\end{aligned}
\end{equation}
where the energy and dissipation functionals $\E_{mi,F}(t)$, $\D_{mi,F}(t)$, $\D_{mi,K,1}(t)$, $\D_{ma}(t)$, $\E(t)$ and $\D(t)$ are defined in \eqref{macro energy functionals and energy dissipation functionals}, \eqref{part of energy functionals}, and \eqref{energy functionals and energy dissipation functional}, respectively.
\end{lemma}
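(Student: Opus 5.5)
The plan is to run a standard macroscopic (hypocoercive) energy argument on the auxiliary system \eqref{Pg-a-b}, differentiated by $\p_x^\a$ for $0\le|\a|\le3$. The three pieces of $\E_{mi,F}$ correspond to three energy identities.

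\textbf{Step 1: the $\|(a,b)\|_{H^3_x}^2$ part.} Multiply $\p_x^\a\eqref{Pg-a-b}_1$ by $\frac{1}{\v}\p_x^\a a$ and $\p_x^\a\eqref{Pg-a-b}_2$ by $\p_x^\a b$, then add. The antisymmetric terms $\frac1\v\l\p_x^\a\div_x b,\p_x^\a a\r_x$ and $\frac1\v\l\n_x\p_x^\a a,\p_x^\a b\r_x$ cancel, which gives $\frac12\frac{d}{dt}\|\p_x^\a(a,b)\|^2$. The relaxation term $\frac1{\v^2}\l\p_x^\a(b-\v u),\p_x^\a b\r_x$ is handled by writing $b=(b-\v u)+\v u$. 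The $\Gamma_{ij}$ term and the right-hand sides $\frac1\v R_4+R_5$ are bounded by $\D_{mi,K,1}$, $\D_{ma}$ and $\E^{1/2}\D$. The linear part of $R_5$, namely $\p_tu_0+\p_t\n_xm_0$, goes into $\D_{ma}$ after Young's inequality, exactly as for $B_{122}$.

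\textbf{Step 2: dissipation of $\n_x b$.} Multiply $\p_x^\a\eqref{Pg-a-b}_3$ by $\p_x^\a(\p_{x_i}b_j+\p_{x_j}b_i)$ and sum over $i,j$. This yields $\frac1\v\sum_{i,j}\|\p_x^\a(\p_ib_j+\p_jb_i)\|^2$, which controls $\frac{1}{\v}\|(\n_x b,\div_x b)\|^2$ via Korn's identity $\sum_{i,j}\|\p_ib_j+\p_jb_i\|^2=2\|\n_xb\|^2+2\|\div_xb\|^2$. The term $-\p_t\Gamma_{ij}$ is handled by moving the time derivative onto the product: this produces $\frac{d}{dt}$ of the cross term $2\l\p_x^\a(\p_ib_j+\p_jb_i),\p_x^\a(\I-\P)g(v_iv_j-1)\M\r$ appearing in $\E_{mi,F}$, minus a term in which $\p_t b$ is substituted from $\eqref{Pg-a-b}_2$. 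After one integration by parts in $x$, that substituted term involves $\frac1\v\n_x a$, $\frac1{\v^2}(b-\v u)$ and $\frac1\v\n_x\Gamma$, each paired with $\n_x\Gamma[(\I-\P)g]$. Since $\frac1{\v^2}\|\p_x^{\a}(\I-\P)g\|_\nu^2\le\D_{mi,K,1}$ for $|\a|\le4$, each of these is bounded by $\eta\|\n_x a\|^2+C_\eta\D_{mi,K,1}$. The term $\frac{2}{\v^2}\Gamma$ paired with $\frac1\v$-weighted quantities is absorbed by Young into $\frac{\eta}{\v}\|\n_x b\|^2+C\D_{mi,K,1}$. The remaining terms are lower order: the $\frac1\v u_0 b$ term is controlled by $\E^{1/2}\D$, and $\Gamma(l)$ splits into a linear $g_1$-part bounded by $\D_{ma}$ and a $\frac1\v v\c\n_x(\I-\P)g$ part bounded by $\D_{mi,K,1}$ plus a small multiple of the dissipation.

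\textbf{Step 3: dissipation of $\n_x a$.} Take $\v\l\p_x^{\a+\a'}a,\p_x^\a b\r_x$ and differentiate in time. Using $\eqref{Pg-a-b}_2$, the $\p_tb$ contribution gives $-\|\p_x^{\a+\a'}a\|^2$ plus the terms $\frac1\v\l\n\p^\a a,\p^\a(b-\v u)\r$ and $\l\n\p^\a a,\n\p^\a\Gamma\r$, which are bounded by $\frac14\|\n a\|^2+C\D_{mi,K,1}$. Using $\eqref{Pg-a-b}_1$, the $\v\p_t a$ contribution gives $\l\p^\a\div b,\p^\a\div b\r\le\|\n_x b\|^2\le\v\cdot\frac1\v\|\n_xb\|^2$, which is small relative to $\frac{1}{\v}\|\n_xb\|^2$ because $\v\le1$. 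The source terms are again bounded by $\D_{ma}+\E^{1/2}\D$.

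\textbf{Step 4: combination.} Add Step 1, Step 2 and Step 3, choosing the weight of Step 3 small enough relative to Step 2 that the $\|\n_xb\|^2$ leakage is absorbed. I expect the Step 2 bookkeeping to be the main obstacle. One needs $\E_{mi,F}$ to remain equivalent to $\|(a,b)\|_{H^3}^2$ modulo $\|(\I-\P)g\|_{H^4_xL^2_v}^2$, which is true since $\v\le1$ and $\|(\I-\P)g\|\le\|g\|$. One also needs to check that every $\frac{1}{\v^2}\Gamma$- or $\frac{1}{\v}\n_x\Gamma$-type term is paired with a quantity carrying at most $\frac1\v$, so that Cauchy--Schwarz produces exactly $\frac{1}{\v^2}\|\p^\a_x(\I-\P)g\|_\nu^2$, which lies in $\D_{mi,K,1}$, rather than something more singular. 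The highest derivative count, $\p_x^{\a+\a'}$ with $|\a|=3$, stays within the $|\a|\le4$ range of $\D_{mi,K,1}$, so the estimate closes and gives \eqref{Last step}.
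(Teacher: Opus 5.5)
Your overall structure is the paper's: the $(a,b)$ energy identity, the $\Gamma_{ij}$ cross term from $\eqref{Pg-a-b}_3$ with $\p_t b$ replaced via $\eqref{Pg-a-b}_2$, and the $\v\l\p_x^{\a+\a'}a,\p_x^\a b\r_x$ cross term. However, one estimate in Step 2 fails as written.

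Pairing $\p_x^\a\eqref{Pg-a-b}_3$ with $\p_x^\a(\p_{x_i}b_j+\p_{x_j}b_i)$ gives you only $\frac1\v\|\n_x b\|^2$. You then claim that the term coming from $-\frac{2}{\v^2}\Gamma_{ij}[(\I-\P)g]$ is absorbed by Young into $\frac{\eta}{\v}\|\n_x b\|^2+C\D_{mi,K,1}$. It is not:
\[
\frac{2}{\v^2}\big|\l\p_x^\a(\p_{x_i}b_j+\p_{x_j}b_i),\p_x^\a\Gamma_{ij}[(\I-\P)g]\r_x\big|\le\frac{\eta}{\v}\|\p_x^{\a}\n_x b\|_{L^2_x}^2+\frac{C}{\eta\v^3}\|\p_x^\a(\I-\P)g\|_{L^2_{x,v}}^2 .
\]
Here $\v^{-3}\|\p_x^\a(\I-\P)g\|^2$ is one power of $\v^{-1}$ beyond anything in $\D(t)$, and no $\v$-independent choice of $\eta$ fixes this.

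The linear $g_1$-part of $\Gamma_{ij}(l)$ has the same defect. It is $\v^{-1}$ times a combination of $\n_x u_0$ and $\n_x^2 m_0$. Young against $\frac{\eta}{\v}\|\n_x b\|^2$ leaves $\frac{C}{\v}\|\n_x(u_0,\n_x m_0)\|_{H^3_x}^2$, which $\D_{ma}$ does not control uniformly in $\v$.

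This is exactly the check you postpone to Step 4. It fails if $\D_{mi,F}$ is your only control of $\n_x b$, since $\D_{mi,F}$ provides $\v^{-1/2}\|\n_x b\|$ rather than $\v^{-1}\|\n_x b\|$.

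The missing ingredient is the relaxation dissipation $\frac{1}{\v^2}\|b-\v u\|_{H^4_x}^2$ contained in $\D_{mi,K,1}$. This is the splitting $b=(b-\v u)+\v u$ that you already use in Step 1. It gives
\[
\frac{1}{\v^2}\|\n_x b\|_{H^3_x}^2\le\frac{2}{\v^2}\|b-\v u\|_{H^4_x}^2+2\|\n_x u\|_{H^3_x}^2\le 2\D_{mi,K,1}.
\]
With this, plain Cauchy--Schwarz bounds the $\v^{-2}\Gamma$ term by $C\D_{mi,K,1}$ and the $g_1$ term by $C(\D_{mi,K,1}+\D_{ma})$.

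This is how the paper treats $B_{43}$ and $B_{441}$. Its bound for $B_{43}$ is $\frac{1}{\v^2}\|\p_x^{\a+\a'}(b-\v u)\|^2+\frac{1}{\v^2}\|\p_x^\a(\I-\P)g\|^2+\|\p_x^{\a+\a'}u\|^2$.

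Once you have this, Step 4 needs no weight tuning either. The leak $\|\p_x^\a\up{div}_x b\|^2$ from Step 3 is $\lesssim\v^2\D_{mi,K,1}$, so the three identities can be added with the fixed coefficients that define $\E_{mi,F}$.
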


\begin{proof}
By applying the derivative operator $\p_x^\a$ with $0\leq|\a|\leq 3$ to the hyperbolic-parabolic system $\eqref{Pg-a-b}_2$, taking $L^2_{x}$-inner product with $\p_x^\a b$, and then integrating by parts, we have, for $|\a'|=1$,
\begin{multline}\label{Energy of b}
\frac{1}{2}\|\n_x^\a b\|^2_{L^2_x} + \underbrace{\frac{1}{\v}\l\p_x^{\a+\a'} a,\p_x^\a b\r_x}_{B_{31}} + \underbrace{\frac{1}{\v^2}\l\p_x^{\a} (b-\v u),\p_x^\a b\r_x}_{B_{32}} + \underbrace{\frac{1}{\v}\sum_{i,j=1}^3\l\p_x^{\a}\Gamma_{ij}[\p_{x_j}(\I-\P)g],\p_x^\a b\r_x}_{B_{33}} \\
=\underbrace{\frac{1}{\v}\l\p_x^\a R_4,\p_x^\a b\r_x}_{B_{34}} + \underbrace{\l\p_x^\a R_5,\p_x^\a b\r_x}_{B_{35}}.
\end{multline}

For $B_{31}$, by noticing $\p_t a=-\frac{1}{\v}\up{div}_xb$ in $\eqref{Pg-a-b}_1$, we have, for $|\a'|=1$,
\begin{equation*}\label{Energy of a}
\begin{aligned}
B_{31}= \frac{1}{\v}\l\p_x^{\a+\a'}a,\p_x^\a b\r_x = -\frac{1}{\v}\l\p_x^\a a,\p_x^\a\up{div}_xb\r_x=\l\p_x^\a a,\p_x^\a\p_t a\r_x=\frac{1}{2}\frac{d}{dt}\|\p_x^\a a\|^2_{L^2_x}.
\end{aligned}
\end{equation*}

For $B_{32}$,
\begin{equation*}\label{B32}
\begin{aligned}
|B_{32}| =&\ \Big| \frac{1}{\v^2}\l b-\v u,b-\v u\r_x + \frac{1}{\v}\l b-\v u, u\r_x \Big| \\
\leq& \ \frac{1}{\v^2}\| b-\v u\|^2_{L^2_x} + \frac{1}{\v}\| b-\v u\|_{L^{\frac{6}{5}}_x}\|u\|_{L^6_x}\\
\lesssim &\ \frac{1}{\v^2} \| b-\v u\|_{H^1_x}^2 + \|\n_x u\|^2_{L^2_x}\\
\lesssim&\ \D_{mi,K,1}(t), \quad \text{for} \ |\a|=0, \\[8pt]
|B_{32}| =&\ \Big| \frac{1}{\v^2}\l\p_x^\a(b-\v u),\p_x^\a(b-\v u)\r_x+\frac{1}{\v}\l\p_x^\a(b-\v u),\p_x^\a u\r_x \Big| \\
\leq& \ \frac{1}{\v^2}\|\p_x^\a(b-\v u)\|^2_{L^2_x} +\frac{1}{\v}\|\p_x^\a(b-\v u)\|_{L^2_x} \|\p_x^\a u\|_{L^2_x} \\
\lesssim&\ \frac{1}{\v^2}\|\p_x^\a(b-\v u)\|_{H^1_x}^2 + \|\p_x^\a u\|^2_{L^2_x}\\
\lesssim&\ \D_{mi,K,1}(t), \quad \text{for} \ 1 \leq |\a| \leq 3. \\
\end{aligned}
\end{equation*}

For $B_{33}$, we have, for $|\a'|=1$,
\begin{equation*}\label{B33}
\begin{aligned}
|B_{33}| =&\ \Big| \frac{1}{\v}\sum_{i,j=1}^3\l\Gamma_{ij}[\p_{x_j}(\I-\P)g],(b-\v u)\r_x+\sum_{i,j=1}^3\l\Gamma_{ij}[\p_{x_j}(\I-\P)g], u\r_x \Big|\\
\lesssim &\ \frac{1}{\v}\big[\|\n_x(\I-\P)g\|^2_{L^2_{x,v}} + \|b-\v u\|^2_{L^2_x}\big] + \|\n_x(\I-\P)g\|_{L^{\frac{6}{5}}_x}\| u\|_{L^6_x}\\[2pt]
\lesssim &\ \D_{mi,K,1}(t), \quad \text{for} \ |\a|=0,\\[8pt]
|B_{33}| =&\ \Big|\frac{1}{\v}\sum_{i,j=1}^3\l\p_x^{\a}\Gamma_{ij}[\p_{x_j}(\I-\P)g],\p_x^\a (b-\v u)\r_x + \sum_{i,j=1}^3\l\p_x^{\a}\Gamma_{ij}[\p_{x_j}(\I-\P)g],\p_x^\a u\r_x \Big|\\
\lesssim&\ \frac{1}{\v}\big[\|\p_x^{\a+\a'}(\I-\P)g\|^2_{L^2_{x,v}} + \|\p_x^\a(b-\v u)\|^2_{L^2_x} \big] + \|\p_x^{\a+\a'}(\I-\P)g\|_{L^2_{x,v}} \|\p_x^{\a}u\|_{L^2_x}\\[2pt]
\lesssim &\ \D_{mi,K,1}(t), \quad \text{for} \ 1 \leq |\a| \leq 3.
\end{aligned}
\end{equation*}

For $B_{34}$ and $B_{35}$, by following the same argument as $B_{31}$, we have, for $|\a'|=1$,
\begin{equation*}\label{B34-B35}
\begin{aligned}
|B_{34}| + |B_{35}| \leq \frac{1}{2\v}\|\p_x^{\a+\a'} b\|_{L^2_x} + C(\D_{ma}(t)+\D_{mi,K,1}(t)+\E^{\frac{1}{2}}\D(t)).
\end{aligned}
\end{equation*}

Hence, substituting the estimates of $B_{31}$, $B_{32}$, $B_{33}$, $B_{34}$ and $B_{35}$ above into \eqref{Energy of b}, we find, for $|\a'|=1$,
\begin{equation}\label{Energy of a and b}
\begin{aligned}
\frac{1}{2}\frac{d}{dt}\|(\p_x^\a a,\p_x^\a b)\|^2_{L^2_x} \leq \frac{1}{2\v}\|\p_x^{\a+\a'}b\|^2_{L^2_x} + C\big(\D_{ma}(t)+\D_{mi,K,1}(t)+\E^{\frac{1}{2}}\D(t)\big).
\end{aligned}
\end{equation}

Furthermore, it follows from $\eqref{Pg-a-b}_3$ that
\begin{multline}\label{partial-b-1}
\frac{1}{\v}\sum_{i,j=1}^3\|\p_x^\a(\p_{x_i} b+\p_{x_j}b_i)\|^2_{L^2_x} = -\frac{d}{dt}\sum_{i,j=1}^3\int\p_x^\a(\p_{x_i} b+\p_{x_j}b_i)\p_x^\a\Gamma_{ij}[(\I-\P)g]dx\\
 +\underbrace{\sum_{i,j=1}^3\l\p_x^\a (\p_{x_j}\p_tb_i+\p_{x_i}\p_tb_j),\p_x^\a\Gamma_{ij}[(\I-\P)g]\r_x}_{B_{41}}+\underbrace{\sum_{i,j=1}^3\l\p_x^\a\up{div}_x b,\p_x^\a\big[(\frac{1}{\v}u_{0}+u)\c b\big]\r_x}_{B_{42}}\\
\underbrace{-\sum_{i,j=1}^3\frac{2}{\v^2}\l\p_x^\a(\p_{x_j}b_i+\p_{x_i}b_j),\p_x^\a\Gamma_{ij}[(\I-\P)g]\r_x}_{B_{43}}+\underbrace{\sum_{i,j=1}^3\l\p_x^\a(\p_{x_j}b_i+\p_{x_i}b_j),\p_x^\a\Gamma_{ij}(l)\r_x}_{B_{44}}
\end{multline}
for $0 \leq |\a| \leq 3$.

Replacing the time derivative of $b$ by $\eqref{Pg-a-b}_2$, $B_{41}$ can be divided into the following parts:
\begin{equation}\label{B51-0}
\begin{aligned}
B_{41}=&\ -2\sum_{i,j=1}^3\l \p_x^\a\p_t b_i, \p_x^\a\p_{x_j}\Gamma_{ij}[(\I-\P)g] \r_x\\
=&\ \underbrace{-\frac{2}{\v}\sum_{i,j=1}^3\l\p_x^\a R_{4i}, \p_x^\a\p_{x_j}\Gamma_{ij}[(\I-\P)g] \r_x}_{B_{411}} \underbrace{-2\sum_{i,j=1}^3\l\p_x^\a R_{5i},\p_x^\a \p_{x_j}\Gamma_{ij}[(\I-\P)g]\r_x}_{B_{412}}\\
&\ +\underbrace{\frac{2}{\v}\sum_{i,j=1}^3 \l\p_x^\a \p_{x_i} a ,\p_x^\a\p_{x_j} \Gamma_{ij}[(\I-\P)g]\r_x}_{B_{413}} +\underbrace{\frac{2}{\v^2}\sum_{i,j=1}^3\l\p_x^\a(b_i-\v u_i),\p_x^\a\p_{x_j}\Gamma_{i,j}[(\I-\P)g]\r_x}_{B_{414}}\\
&\ +\underbrace{\frac{2}{\v}\sum_{i,j,k=1}^3\l\p_x^\a \p_{x_k}(\I-\P)g(v_kv_i-1),\p_x^\a\p_{x_j}\Gamma_{ij}[(\I-\P)g]\M\r_{x,v}}_{B_{415}}.
\end{aligned}
\end{equation}
where, in the last term above, we consider the fact that $\Gamma_{ij}[\p_{x_j}(\I-\P)g]=\l \p_{x_j}(\I-\P)g,\M \r_v$.\\
For $B_{411}$, by recalling the definition of $R_{4i}$ and $\Gamma[(\I-\P)g]$ in \eqref{R4i-R5i-Gamma-l}, for $|\a'|=1$,
\begin{equation*}\label{B511}
\begin{aligned}
|B_{411}| \lesssim \frac{1}{\v}\|u_0 a+u m_0\|_{L^2_x} \|\n_x(\I-\P)g\|_{L^2_{x,v}}
\lesssim&\ \frac{1}{\v}(\|u_0\|_{L^3_x}\|a\|_{L^6_x}
+\|m_0\|_{L^3_x}\| u\|_{L^6_x})\|\n_x(\I-\P)g\|_{L^2_{x,v}}\\
\lesssim &\ \frac{1}{\v}\|(u_0,m_0)\|_{H^1_x} \|(\n_x a,\n_x u)\|_{L^2_x} \|\n_x(\I-\P)g\|_{L^2_{x,v}} \\
\lesssim &\ \E^{\frac{1}{2}}(t)\D(t), \quad \text{for} \ |\a| =0,
\end{aligned}
\end{equation*}
\begin{equation*}\label{B511_2}
\begin{aligned}
|B_{411}| \lesssim& \ \frac{1}{\v}\|\p_x^\a(u_0 a+u m_0)\|_{L^2_x} \|\p_x^{\a+\a'}(\I-\P)g\|_{L^2_{x,v}} \\
\lesssim& \ \frac{1}{\v} \Big(\|u_0\|_{L^\infty_x}\|\p_x^\a a\|_{L^2_x} +\|\p_x^\a u_0\|_{L^3_x}\|a\|_{L^6_x}\\
&\ +\sum_{1\leq|\tilde{\a}|\leq|\a|-1}\|\p_x^{\tilde{\a}}u_0\|_{L^4_x}\|\p_x^{\a-\tilde{\a}} a\|_{L^4_x}+\|m_0\|_{L^\infty_x}\|\p_x^\a u\|_{L^2_x} +\|\p_x^\a m_0\|_{L^3_x}\|u\|_{L^6_x}\\
&\ + \sum_{1\leq|\tilde{\a}|\leq|\a|-1}\|\p_x^{\tilde{\a}}m_0\|_{L^4_x}\|\p_x^{\a-\tilde{\a}} u\|_{L^4_x} \Big) \|\p_x^{\a+\a'}(\I-\P)g\|_{L^2_{x,v}}\\
\lesssim &\ \E^{\frac{1}{2}}(t)\D(t),\quad \text{for} \ 1 \leq |\a| \leq 3,
\end{aligned}
\end{equation*}
where the H$\ddot{\up{o}}$lder inequality in $v$ is applied that
\begin{multline*}
\|\n_x\Gamma_{ij}[(\I-\P)g]\|_{L^2_x}=\|\int\n_x(\I-\P)g(v_i v_j-\delta_{ij})\M dv\|_{L^2_x} \\
\leq \|\n_x(\I-\P)g\|_{L^2_{x,v}}\|(v_iv
_j-\delta_{ij})\M\|_{L^2_v} \lesssim \|\n_x(\I-\P)g\|_{L^2_{x,v}}.
\end{multline*}
Recalling $R_{5i}$ in \eqref{R4i-R5i-Gamma-l}, $B_{412}$ is bounded by, for $|\a'|=1$,
\begin{equation*}\label{B512}
\begin{aligned}
|B_{412}| =&\ \Big| \sum_{i,j=1}^3\l\p_x^\a R_{5i},\p_x^\a \p_{x_j}\Gamma_{ij}[(\I-\P)g]\r_x \Big| \\
\lesssim&\ \v^2\|(\p_t\p_x^{\a}u_0,\p_t\p_x^{\a+\a'}m_0)\|^2_{L^2_x} + \frac{1}{\v^2}\|\p_x^{\a+\a'}(\I-\P)g\|^2_{L^2_{x,v}} + \E^{\frac{1}{2}}(t)\D(t)\\
\lesssim&\ \D_{ma}(t)+\D_{mi,K,1}(t)+\E^{\frac{1}{2}}(t)\D(t).
\end{aligned}
\end{equation*}
By applying the similar argument as $B_{411}$ and $B_{412}$, we have, for $|\a'|=1$,
\begin{equation*}\label{B513-B514-B515}
\begin{aligned}
|B_{413}| + |B_{414}| + |B_{415}| \leq&\ \frac{1}{4}\|\p_x^{\a+\a'}a\|^2_{L^2_x} + \frac{C}{\v^2}\big[\|\p_x^{\a+\a'}(\I-\P)g\|^2_{L^2_x} + \|\p_x^\a(b-\v u)\|^2_{L^2_x} \big]\\
&\ +\frac{2}{\v}\sum_{i,j,k=1}^3\l\p_x^{\a}\p_{x_k}\Gamma_{ik}[(\I-\P)g], \p_x^{\a}\p_{x_i}\Gamma_{ij}[(\I-\P)g]\r_x\\
\leq&\ \frac{1}{4}\|\p_x^{\a+\a'}a\|^2_{L^2_x} + \frac{C}{\v}\|\p_x^{\a+\a'}(\I-\P)g\|^2_{L^2_{x,v}} + C\E^{\frac{1}{2}}(t)\D(t)\\
\leq&\ \frac{1}{4}\|\p_x^{\a+\a'}a\|^2_{L^2_x} + C\D_{mi,K,1}(t)+C\E^{\frac{1}{2}}(t)\D(t).
\end{aligned}
\end{equation*}
Therefore, combing the estimates on $B_{411}$, $B_{412}$, $B_{413}$, $B_{414}$, $B_{415}$, and substitute them into \eqref{B51-0}, we have, for $|\a'|=1$,
\begin{equation}\label{B51}
\begin{aligned}
|B_{41}| \leq \ \frac{1}{4}\|\p_x^{\a+\a'}a\|^2_{L^2_x} + C \big(\D_{ma}(t)+\D_{mi,K,1}(t)+\E^{\frac{1}{2}}(t)\D(t)\big).
\end{aligned}
\end{equation}


For $B_{42}$ and $B_{43}$, we also have, for $|\a'|=1$,
\begin{equation}\label{B52-B53}
\begin{aligned}
|B_{42}| + |B_{43}|
\lesssim&\ \frac{1}{\v}\|\p_x^{\a+\a'}b\|_{L^2_x} \|u_0\|_{H^{|\a|}_x}\|b-\v u\|_{H^{|\a|}_x} + \|\p_x^{\a+\a'}b\|_{L^2_x} \|u\|_{H^{|\a|}_x} \Big(\|\n_x u_0\|_{H^{|\a|-1}_x}+\|\n_x b\|_{H^{|\a|-1}_x}\Big)\\
\lesssim&\ \frac{1}{\v^2}\big[\|\p_x^{\a+\a'}(b-\v u)\|^2_{L^2_x} + \|\p_x^\a(\I-\P)g\|^2_{L^2_{x,v}} \big] + \|\p_x^{\a+\a'}u\|^2_{L^2_x}\\
\lesssim&\ \E^{\frac{1}{2}}(t)\D(t)+ \D_{mi,K,1}(t).
\end{aligned}
\end{equation}


Recalling the definition of $l$ in \eqref{R4i-R5i-Gamma-l}, $B_{44}$ is divided into four parts as follows:
\begin{equation*}\label{B54-0}
\begin{aligned}
B_{44}=&\underbrace{-\frac{1}{\v}\sum_{i,j=1}^3 \l\p_x^\a(\p_{x_i} b_j+\p_{x_j} b_i), \p_x^\a[(\I-\P_0)v\c\n_x g_1](v_iv_j-1)\M\r_{x,v}}_{B_{441}}\\
&\underbrace{-\frac{1}{\v}\sum_{i,j=1}^3 \l\p_x^\a(\p_{x_i} b_j+\p_{x_j} b_i),\p_x^\a [v\c\n_x(\I-\P)g](v_iv_j-1)\M\r_{x,v}}_{B_{442}}\\
&\underbrace{-\frac{1}{\v}\sum_{i,j=1}^3 \l\p_x^\a(\p_{x_i} b_j+\p_{x_j} b_i),\p_x^\a \up{div}_v(u_0 g_1\M)(v_iv_j-1)\r_{x,v}}_{B_{443}}\\
&\underbrace{-\sum_{i,j=1}^3\l\p_x^\a(\p_{x_i} b_j+\p_{x_j} b_i),\p_x^\a \up{div}_v(u g_1 \M)(v_iv_j-1)\r_{x,v}}_{B_{444}}.
\end{aligned}
\end{equation*}
For $B_{441}$, by recalling the definition of $g_1$, it can be bounded by
\begin{equation*}\label{B541}
|B_{441}| \lesssim  \D_{ma}(t)+\D_{mi,K,1}(t)+\E^{\frac{1}{2}}(t)\D(t).
\end{equation*}
For $B_{442}$ and $B_{443}$, we can apply the similar argument as $B_{43}$ and obtain
\begin{equation*}\label{B542-B543}
\begin{aligned}
|B_{442}| + |B_{443}| \lesssim \ \D_{mi,K,1}(t)+\E^{\frac{1}{2}}(t)\D(t).
\end{aligned}
\end{equation*}
For $B_{444}$, we find, for $|\a'|=1$,
\begin{equation*}\label{B544}
\begin{aligned}
|B_{444}| = & \sum_{i,j=1}^3\l\p_x^\a(\p_{x_i} b_j+\p_{x_j} b_i),\p_x^\a(u_j g_1 v_i+u_i g_1 v_j)\r_{x,v}\\
\lesssim &\ \|\p_x^{\a+\a'}b\|_{L^2_x} \|\p_x^\a(u_j g_1 v_i+u_i g_1 v_j)\|_{L^2_{x,v}}\\
\lesssim &\ \E^{\frac{1}{2}}(t)\D(t).
\end{aligned}
\end{equation*}
Hence, collecting the estimates on $B_{441}$, $B_{442}$, $B_{443}$ and $B_{444}$, we have
\begin{equation}\label{B54}
\begin{aligned}
|B_{44}| \lesssim \D_{ma}(t)+\D_{mi,K,1}(t)+\E^{\frac{1}{2}}(t)\D(t).
\end{aligned}
\end{equation}


Noticing that, for any $0\leq|\a|\leq 3$ and $|\a'|=1$,
\begin{equation}\label{partial-b-0}
\frac{1}{\v}\sum_{i,j=1}^3\|\p_x^\a(\p_{x_i} b_j+\p_{x_j} b_i)\|^2_{L^2_x} = \frac{2}{\v}(\|\p_x^{\a+\a'}b\|^2_{L^2_x} + \|\p_x^\a\up{div}_x b\|^2_{L^2_x} ),
\end{equation}
and substituting the estimates on $B_{41}$, $B_{42}$, $B_{43}$, $B_{44}$ into\eqref{partial-b-1}, it follows that, for $|\a'|=1$,
\begin{multline}\label{B5}
\frac{d}{dt}\sum_{i,j=1}^3\int\p_x^\a(\p_{x_i} b_j + \p_{x_j} b_i)\c\p_x^\a\Gamma_{ij}[(\I-\P)g] \,dx + \frac{2}{\v}(\|\p_x^{\a+\a'}b\|^2_{L^2_x} + \|\p_x^\a\up{div}_x b\|^2_{L^2_x})\\
\leq \frac{1}{4}\|\p_x^{\a+\a'}a\|^2_{L^2_x} + C\big(\D_{ma}(t)+\D_{mi,K,1}(t)+\E^{\frac{1}{2}}(t)\D(t)\big).
\end{multline}


On the other hand, considering the equation $\eqref{Pg-a-b}_2$, we have,
\begin{equation}\label{B6-0}
\begin{aligned}
\|\p_x^{\a+\a'}a\|^2_{L^2_x}= &\sum_{i=1}^3\l\p_x^\a\p_{x_i} a,\p_x^\a\p_{x_i} a \r_x=-\v\frac{d}{dt} \int \partial_x^{\a+\a'} a\c\p_x^\a b dx + \underbrace{\v\sum_{i=1}^3\l\p_x^\a\p_{x_i} \p_t a,\p_x^\a b_i\r_x}_{B_{51}}\\
&\underbrace{-\frac{1}{\v}\sum_{i=1}^3\l\p_x^\a\p_{x_i} a,\p_x^\a(b_i-\v u_i)\r_x}_{B_{52}} \underbrace{-\sum_{i,j=1}^3\l\p_x^\a\p_{x_i} a,\p_x^\a\p_{x_i}(\I-\P)g(v_iv_j-1)\M\r_{x,v}}_{B_{53}}\\
& +\underbrace{\sum_{i=1}^3 \l\p_x^\a\p_{x_i} a,\p_x^\a R_{4i}\r_x}_{B_{54}} + \underbrace{\sum_{i=1}^3\l\p_x^\a\p_{x_i} a,\p_x^\a R_{5i}\r_x}_{B_{55}}
\end{aligned}
\end{equation}
for $0 \leq |\a| \leq 3$ and $|\a'|=1$.\\
For $B_{51}$, by noting $\v\p_t a=-\up{div}_x b$ in $\eqref{Pg-a-b}_1$, we have
\begin{equation*}\label{B61}
B_{51} = -\sum_{i=1}^3\l\p_x^\a \p_{x_i} \up{div}_x b,\p_x^\a b_i\r= \|\p_x^\a\up{div}_x b\|^2_{L^2_x}.
\end{equation*}
For $B_{52}$ and $B_{53}$, we find, for $|\a'|=1$,
\begin{equation*}\label{B62}
\begin{aligned}
|B_{52}|=& \ \Big| -\frac{1}{\v}\sum_{i=1}^3\l\p_x^\a\p_{x_i} a, \p_x^\a(b_i-\v u_i)\r_x \Big| \\
\leq&\ \frac{1}{16}\|\p_x^{\a+\a'} a\|^2_{L^2_x} + \frac{C}{\v^2}\|\p_x^\a(b-\v u)\|^2_{L^2_x} \\
\leq&\ \frac{1}{16}\|\p_x^{\a+\a'} a\|^2_{L^2_x} + C\D_{mi,K,1}(t),
\end{aligned}
\end{equation*}
\begin{equation*}\label{B63}
\begin{aligned}
|B_{53}| \leq&\ \frac{1}{16}\|\p_x^{\a+\a'} a\|^2_{L^2_x} + \frac{C}{\v^2}\|\p_x^{\a+\a'}(\I-\P)g\|^2_{L^2_x} \\
\leq&\ \frac{1}{16}\|\p_x^{\a+\a'} a\|^2_{L^2_x} + C\D_{mi,K,1}(t).
\end{aligned}
\end{equation*}
Recalling $R_{4i}$ and $R_{5i}$ in \eqref{R4i-R5i-Gamma-l}, we have, for $|\a'|=1$,
\begin{equation*}\label{B64}
\begin{aligned}
|B_{54}|=&\ \Big|\sum_{i=1}^3\l\p_x^\a\p_{x_i} a,\p_x^\a(u_{0i}a+u_i m_0)\r_x \Big| \\
\lesssim &\ \|\p_x^{\a+\a'}a\|_{L^2_x} \|u_0\|_{H^{|\a|}_x}\|\n_x a\|_{H^{|\a|-1}_x} + \|\p_x^{\a+\a'}a\|_{L^2_x} \|m_0\|_{H^{|\a|}_x}\|\n_x u\|_{H^{|\a|-1}_x}\\
\lesssim &\ \E^{\frac{1}{2}}(t)\D(t),
\end{aligned}
\end{equation*}
\begin{equation*}\label{B65}
\begin{aligned}
|B_{55}| =&\ \Big|-\sum_{i=1}^3\l\p_x^\a\p_{x_i} a,\p_x^\a\big[\p_t(u_{0i} m_0)+u_i a+\p_t u_{0i}+\p_t\p_{x_i} m_0\big] \r_x \Big| \\
\lesssim&\ \|\p_x^{\a+\a'}a\|_{L^2_x} \big(\|\p_t u_0\|_{H^{|\a|}_x}\|\n_x m_0\|_{H^{|\a|-1}_x} + \|\p_t m_0 \|_{H^{|\a|}_x}\|\n_x u_0\|_{H^{|\a|-1}_x}\big)\\[2pt]
&+ \|\p_x^{\a+\a'}a\|_{L^2_x} \|\p_t\p_x^\a u_0\|_{L^2_x} \|\p_t\p_x^{\a+\a'} m_0\|_{L^2_x}\\
\leq&\ \frac{1}{16}\|\p_x^{\a+\a'} a\|^2_{L^2_x} + C(\|\p_t\p_x^\a u_0\|^2_{L^2_x} + \|\p_t\p_x^{\a+\a'} m_0\|^2_{L^2_x} ) + C\E^{\frac{1}{2}}(t)\D(t)\\
\leq&\ \frac{1}{16}\|\p_x^{\a+\a'} a\|^2_{L^2_x} + C\big(\D_{ma}(t)+\E^{\frac{1}{2}}(t)\D(t)\big).
\end{aligned}
\end{equation*}
Then, by inserting the estimates of $B_{51}$, $B_{52}$, $B_{53}$, $B_{54}$ and $B_{55}$ into \eqref{B6-0}, we obtain, for $|\a'|=1$,
\begin{equation}\label{B6}
\begin{aligned}
\frac{7}{8}\|\p_x^{\a+\a'}a\|^2_{L^2_x} + \v\frac{d}{dt}\int\p_x^{\a+\a'} a\c\n_x^\a b \, dx\leq C\big(\D_{ma}(t)+\D_{mi,K,1}(t)+\E^{\frac{1}{2}}(t)\D(t)\big).
\end{aligned}
\end{equation}

Thus, combining \eqref{Energy of a and b}, \eqref{B5} and \eqref{B6}, we have,
\begin{multline}\label{B5-B6}
\frac{d}{dt} \int\p_x^\a(\p_{x_i} b_j+\p_{x_j} b_i)\c\p_x^\a\Gamma_{ij}[(\I-\P)g] \,dx + \v\frac{d}{dt}\int\p_x^{\a+\a'} a\c\p_x^\a b \,dx \\
+\frac{1}{2\v} \big(\|\p_x^{\a+\a'}b\|^2_{L^2_x} + \|\p_x^\a\up{div}_x b\|^2_{L^2_x} \big) + \frac{1}{2}\|\p_x^{\a+\a'}a\|^2_{L^2_x}
\leq C\big(\D_{ma}(t)+\D_{mi,K,1}(t)+\E^{\frac{1}{2}}(t)\D(t)\big),
\end{multline}
for any $0\leq |\a|\leq 3$ and $|\a'|=1$.

The proof of Lemma \ref{Estiamte-Mi-fluid-part} is finally completed by summing up $0 \leq |\a| \leq 3$ in \eqref{B5-B6}.

\end{proof}

\subsection{Proof of Proposition \ref{total_energy_dissipation}}
\label{subsec:proof_total_energy}

In this subsection, we present the specific proof of the total energy estimate Proposition \ref{total_energy_dissipation} by combing the Lemmas \ref{Estimate-kinetic-Mi-1}, \ref{Estiamte-Mi-kinetic-part-mixed-partial} and \ref{Estiamte-Mi-fluid-part} together.

\begin{proof}
We prove the total energy estimate \eqref{total_energy_estimate} in the following three steps:

\textbf{Step 1:} Choose a constant $\tilde{\lambda}_1>0$ small enough such that
\begin{equation}\label{tilde-lambda-1}
C_2 \tilde{\lambda}_1 \leq \frac{C_{1,3}}{2},
\end{equation}
where $C_2$,\,$C_{1,3}$ are constants in inequality \eqref{Step two} of Lemma \ref{Estiamte-Mi-kinetic-part-mixed-partial} and in inequality \eqref{Last step} of Lemma \ref{Estiamte-Mi-fluid-part} respectively.
Then, by applying \eqref{Last step} $ + \tilde{\lambda}_1\times$ \eqref{Step two}, we can find that there exists constants $C_{1,4}\,C_4>0$ such that
\begin{multline}\label{First time}
    \frac{1}{2}\frac{d}{dt}\big(\E_{mi,F}(t)+\tilde{\lambda}_1\E_{mi,K,2}(t)\big)+C_{1,4}\big(\D_{mi,K,2}(t)+\D_{mi,F}(t)\big) \\
    \leq C_4 \big(\D_{mi,K,1}(t)+\D_{ma}(t)+\E^{\frac{1}{2}}(t)\D(t)\big).
\end{multline}

\textbf{Step 2:} Choose a constant $\tilde{\lambda}_2>0$ small enough such that
\begin{equation}\label{tilde-lambda-2}
C_4 \tilde{\lambda}_2 \leq \frac{C_{1,1}}{2},
\end{equation}
where $C_{1,1}$ is the constant in \eqref{Step one} and $C_4$ is the constant in \eqref{First time}.
Then, by applying \eqref{Step one}$+\tilde{\lambda}_2\times$\eqref{First time}, there exists constants $C_{1,5}, C_5 >0$ such that
\begin{multline}\label{Third time}
\frac{1}{2}\frac{d}{dt}\big(\E_{mi,K,1}(t)+\tilde{\lambda}_1\tilde{\lambda}_2\E_{mi,K,2}(t)+\tilde{\lambda}_2\E_{mi,F}(t)\big)+C_{1,5}\big(\sum_{i=1}^2\D_{mi,K,i}(t)+\D_{mi,F}(t)\big)\\
\leq C_5 \big(\D_{ma}(t)+\E^{\frac{1}{2}}(t)\D(t)\big).
\end{multline}

\textbf{Step 3:} Choose a constant $\tilde{\lambda}_3>0$ small enough such that

\begin{equation}\label{tilde-lambda-4}
C_5 \tilde{\lambda}_3 \leq \frac{1}{4},
\end{equation}
where $C_5$ is a constant in \eqref{Third time}.
Therefore, by using \eqref{Third time}$+\tilde{\lambda}_3\times$\eqref{Third time}, there exists a constant $C_{1,6}\,C_6>0$ such that
\begin{multline}\label{Last time}
    \frac{1}{2}\frac{d}{dt}\big(\E_{ma}(t) + \tilde{\lambda}_3\E_{mi,K,1}(t) + \tilde{\lambda}_1\tilde{\lambda}_2\tilde{\lambda}_3\E_{mi,K,2}(t) + \tilde{\lambda}_2\tilde{\lambda}_3\E_{mi,F}(t)\big)\\
 + C_{1,6}\big(\sum_{i=1}^2\D_{mi,K,i}+\D_{mi,F}(t)+\D_{ma}(t)\big) \leq C_6\E^{\frac{1}{2}}(t)\D(t).
\end{multline}

Denoting
\begin{equation}\label{Energy functional}
\begin{aligned}
\E(t):=&\E_{ma}(t)+\tilde{\lambda}_3\E_{mi,K,1}(t)+\tilde{\lambda}_1\tilde{\lambda}_2\tilde{\lambda}_3\E_{mi,K,2}(t)+\tilde{\lambda}_2\tilde{\lambda}_3\E_{mi,F}(t),\\
\D(t):=&\sum_{i=1}^2 \D_{mi,K,i}+\D_{mi,F}(t)+\D_{ma}(t).
\end{aligned}
\end{equation}
and re-naming $\tilde{C}=C_{1,6}$, $C=C_{6}$, we finally obtain the total energy estimate \eqref{total_energy_estimate}:
\begin{equation}
\begin{aligned}
\frac{1}{2}\frac{d}{dt}\E(t)+\tilde{C}\D(t)\leq C\E^{\frac{1}{2}}(t)\D(t),
\end{aligned}
\end{equation}
where $\lambda_i, i =1,2,3,4$, defined as follows:
\begin{equation}\label{lambda constants}
\quad \lambda_1=\tilde{\lambda}_2\tilde{\lambda}_3,
\quad
\lambda_2=\tilde{\lambda}_3,
\quad
\lambda_3=\tilde{\lambda}_1\tilde{\lambda}_2\tilde{\lambda}_3,
\quad
\lambda_4=1.
\end{equation}
\end{proof}






\section{Proof of main result}
\label{sec:proof_main}

\subsection{Proof of Theorem \ref{Main-Limits}}
\label{subsec:proof_main_theorem}

In this subsection, we first give a proof of Theorem \ref{Main-Limits}.
Define
\begin{equation}\label{T-*}
T^* = \sup\{\tau\geq 0\,|\,C\sup_{0\leq t\leq \tau}\mathbb{E}^{\frac{1}{2}}(t)\leq \frac{1}{2}\},
\end{equation}
where $C$ is given in \eqref{instant energy functional-energy dissipative rate functional}.

Considering $\mathbb{E}(0) \sim \E(0)$ and the initial condition in Theorem \ref{Main-Limits}, we can find a constant $C^* > 0$ such that
\begin{equation}\label{initialdata-instant energy}
2C\mathbb{E}(0)\leq C^* \E(0)\leq C^* \delta^*,
\end{equation}
where $C$ is given in \eqref{instant energy functional-energy dissipative rate functional} and $ \delta^*$ is chosen to be $0 < \delta^* \leq \min\{1,\frac{1}{16 C^* C^2}\}$.
Then, for any $0<\v\leq 1$,
\begin{equation}\label{initial-delta-1}
\begin{aligned}
C\mathbb{E}^{\frac{1}{2}}(0)\leq C\sqrt{C^* \delta^*}\leq \frac{1}{4}<\frac{1}{2},
\end{aligned}
\end{equation}
where $C,\,C^*$ are given in \eqref{initialdata-instant energy}

Then the continuity of $\mathbb{E}(t)$ implies that $T^*>0$. Consequently, we derive from the definition of $T^*$, \eqref{instant energy functional-energy dissipative rate functional} and \eqref{initialdata-instant energy} that, for any $t\in[0, T^*]$, we find
\begin{equation}
\mathbb{E}(t)+\int_0^t\mathbb{D}(s) \,ds \leq 2C\mathbb{E}(0) \leq C^* \delta^*,
\end{equation}
uniformly for all $0<\v\leq 1$, which is actually the energy estimate \eqref{The global estimate} and immediately implies by the initial bound \eqref{initial-delta-1} that,
\begin{equation}
C\sup_{0\leq \tau\leq t}\mathbb{E}^{\frac{1}{2}}(\tau) \leq \frac{1}{4} < \frac{1}{2},
\end{equation}
for any $0<\v\leq 1$. Thus, it follows the continuity of $\mathbb{E}(t)$ and the definition of $T^*$ that $T^*=+\infty$.


\subsection{Proof of Corollary \ref{Conergence}}
\label{subsec:proof_main_corollary}

Recalling $g=(\I-\P)g+\P g=(\I-\P)g+(a+v\c b)\M$ and $f^\v=(1+m_0)M + \v[(v\c u_0)(1+m_0)-v\c\n_x m_0\big]M + \v g\M,\, u^\v=u_0+\v u$, and applying the Sobolev embedding $H^2\hookrightarrow L^\infty$, we have
\begin{equation}
\begin{aligned}
|f^\v-(1+m_0)M|\leq&\sup_{t\geq 0}\|f^\v-(1+m_0)M\|_{L^\infty_xL^\infty_v}\\
\lesssim &\ \sup_{t\geq 0}\|f^\v-(1+m_0)M\|_{H^2_x H^2_v} \\
\lesssim &\ \v \sup_{t\geq 0}\|[(v\c u_0)(1+m_0)-v\c\n_x m_0\big]M+g\M\|_{H^2_x H^2_v}\\
\lesssim &\ \v\sup_{t\geq 0}\|(v\c u_0)(1+m_0)-v\c\n_x m_0\big]M\|_{H^2_xH^2_v} +  \v \sup_{t\geq 0} \|g \sqrt{M} \|_{H^2_x L^2_v}\\
&\ + \v\sup_{t\geq 0}\|\n_v\big[(\I-\P)g\M\big]\|_{H^2_x H^1_v} + \v\sup_{t\geq 0} \|\n_v\big[\P g \sqrt{M}\big]\|_{H^2_x H^1_v}\\
\lesssim &\  \v\sup_{t\geq 0}\big[\|u_0\|_{H^2_x}(1+\|m_0\|_{H^2_x})+\|\n_xm_0\|_{H^2_x} + \|g \|_{H^2_x L^2_v}\\
&\ +\|\n_v(\I-\P)g\|_{H^2_xH^1_v}+\|(a,b)\|_{H^2_x}\big] \lesssim \v, \\[4pt]
|u^\v-u_0| \leq& \ \v\sup_{t\geq 0}\|u\|_{L^\infty_x}\lesssim \v\sup_{t\geq 0}\|u\|_{H^2_x}\lesssim \v,
\end{aligned}
\end{equation}
where the uniform boundedness of all quantities is guaranteed by the global-in-time energy estimate \eqref{The global estimate}
\cite{CYP17}.

\section*{Acknowledgements}
We would like to thank the anonymous reviewers for their valuable comments and suggestions, which helped us to improve the quality of the manuscript. 
Most parts of the work were done when K. Qi visited the School of Mathematics at the South China University of Technology during the summer of 2023. He would like to thank the department for its hospitality. This work was partially supported by the National Natural Science Foundation of China (12071152), by the Guangdong Basic and Applied Basic Research Foundation (2022A1515012112, 2020B1515310015), and by the Guangdong Provincial Key Laboratory of Human Digital Twin (2022B1212010004).

\bibliographystyle{siam}
\bibliography{Kinetic-Fluid_Project}

\end{document}